\newtheorem{thm}{Theorem}[section]
\newtheorem{lemma}[thm]{Lemma}
\newtheorem{proposition}[thm]{Proposition}
\newtheorem{cor}[thm]{Corollary}
\newtheorem*{thm*}{Theorem}
\newtheorem*{lemma*}{Lemma}
\newtheorem*{prop*}{Proposition}
\newtheorem*{cor*}{Corollary}
\newtheorem*{conj*}{Conjecture}
\theoremstyle{definition}
\newtheorem{definition}[thm]{Definition}
\newtheorem{ex}[thm]{Example}
\theoremstyle{remark}
\newtheorem{rmk}{Remark}
\newcommand{\rr}{\mathbb{R}}
\def\integers{{\mathbb Z}}
\newcommand{\ind}{\mbox{$\perp \kern-5.5pt \perp$}}
\begin{document}

\title[Relations in doubly laced crystal graphs via discrete Morse theory]{Relations in doubly laced crystal graphs via discrete Morse theory}
\thanks{ The author was partially supported by NSF grant DMS-1500987.}
\thanks{Key words: Crystals \and M\"obius function \and Discrete Morse theory \and Order complex \and Relations among crystal operators}




\author{Molly Lynch}
\address{North Carolina State University}
\email{melynch4@ncsu.edu}

\maketitle

\begin{abstract}
We study the combinatorics of crystal graphs given by highest weight representations of types $A_{n}, B_{n}, C_{n}$, and $D_{n}$, uncovering new relations that exist among crystal operators. Much structure in these graphs has been revealed by local relations given by Stembridge and Sternberg. However, there exist relations among crystal operators that are not implied by Stembridge or Sternberg relations. Viewing crystal graphs as edge colored posets, we use poset topology to study them. Using the lexicographic discrete Morse functions of Babson and Hersh, we relate the M\"obius function of a given interval in a crystal poset of simply laced or doubly laced type to the types of relations that can occur among crystal operators within this interval.

For a crystal of a highest weight representation of finite classical Cartan type, we show that whenever there exists an interval whose M\"obius function is not equal to -1, 0, or 1, there must be a relation among crystal operators within this interval not implied by Stembridge or Sternberg relations. As an example of an application, this yields  relations among crystal operators in type $C_{n}$ that were not previously known. Additionally, by studying the structure of Sternberg relations in the doubly laced case, we prove that crystals of highest weight representations of types $B_{2}$ and $C_{2}$ are not lattices.  
\end{abstract}

\section{Introduction}
\label{intro}
 Crystal graphs are edge colored, directed graphs that give information regarding the representations of Lie algebras. In this paper, we study crystals given by highest weight representations of finite classical Cartan type, namely crystals of types $A_{n}, B_{n}, C_{n}$, and $D_{n}$. 
We are interested in understanding relations that occur among crystal operators. In \cite{Stembridge}, Stembridge gives a local characterization of crystals of highest weight representations in the simply laced case. The axioms stated in his paper imply a list of local relations that exist among crystal operators. These local relations determine much of the structure of the crystal graph. He shows these relations also hold for crystals of doubly laced type. In \cite{Sternberg}, Sternberg proves that additional local relations exist among crystal operators in the doubly laced case. In spite of this, in the simply laced case, there do exist relations among crystal operators that are not implied by Stembridge relations as seen in \cite{HershLenart}.
 
The crystal graphs which we are interested in, namely crystals arising from highest weight representations of finite type, are equipped with a natural partial ordering (see Section 2 for background information on partially ordered sets). This partial order is given by covering relations, denoted $\lessdot$, as follows: we say that $x \lessdot y$ whenever $y = f_{i}(x)$, where $f_{i}$ is a so-called crystal operator. We color each of these covering relations with $i$, giving the crystal the structure of an edge colored poset. 
 
The question of what types of relations can exist among crystal operators has been previously studied by Hersh and Lenart in \cite{HershLenart} in the simply laced case. They show that for arbitrary intervals in crystals of simply laced type, there exist relations among crystal operators not implied by Stembridge relations. More generally, Hersh and Lenart prove that whenever there is an interval $[u,v]$ in a crystal of finite, simply-laced type with the M\"obius function $\mu(u,v) \notin \{-1,0,1\}$, then within $[u,v]$ there exists a relation among crystal operators not implied by Stembridge relations. 

We prove the analogue of this result for crystals of finite, doubly laced type, which was not previously known, using a tool developed in \cite{HershBabson} known as lexicographic discrete Morse functions. In doing so, we give a new proof of the result in the simply laced case. More specifically, we show that if we have an interval $[u,v]$ in a crystal poset of finite classical Cartan type such that all relations among crystal operators are implied by Stembridge or Sternberg relations, then the M\"obius function of this interval must be equal to $-1, 0,$ or $1$. We do so by constructing a discrete Morse function on the order complex, $\Delta(u,v)$, with at most one critical cell. We give a procedure for determining if $[u,v]$ has a critical cell, and finding this cell when it exists. If the discrete Morse function has exactly one critical cell, this results in the M\"obius function of the interval equalling $\pm 1$, else the M\"obius function equals 0.

Danilov, Karzanov, and Koshevoy have studied these crystal posets in case when $n = 2$ in \cite{Danilov, DKK}. They show that crystals of highest weight representations of type $A_{2}$ are in fact lattices. In the present paper, by studying the structure of the Sternberg relations, we prove that crystals of highest weight representations of types $B_{2}$ and $C_{2}$ are not lattices. Additionally, using SAGE, we search for intervals in crystal posets with M\"obius function not equal to -1,0, or 1. By doing so, we find new relations among crystal operators in crystals of type $C_{3}$.  

Our main results, Corollaries \ref{simply laced mobius} and \ref{doubly laced mobius} consider intervals in crystal posets where all relations among crystal operators are implied by Stembridge or Sternberg relations. Now let us describe and illustrate the main ideas of this paper through an example. 

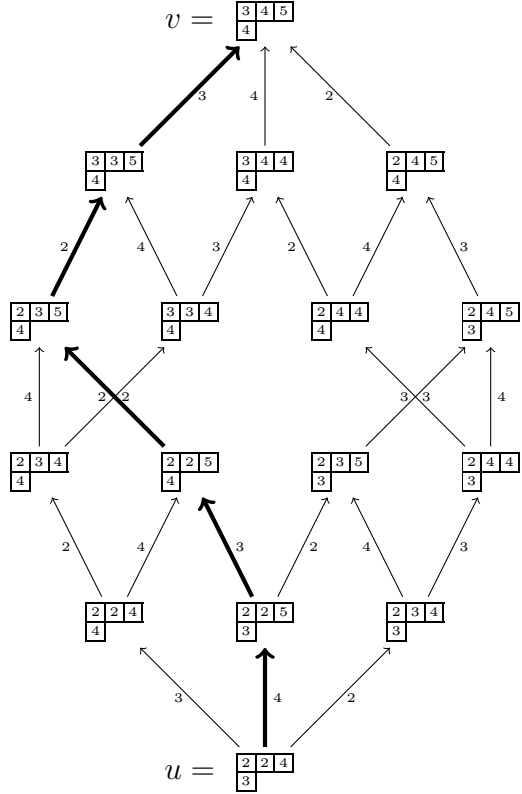
\begin{figure}[h] 
\ytableausetup{boxsize=1em}
\fontsize{4.8pt}{6.7}
\centering
	\begin{tikzpicture}
    \node (u) at (-1,0) {\normalsize $u = $};
    \node (a) at (0,0) {$\begin{ytableau} 2 & 2 & 4 \\ 3 \end{ytableau}$};
    \node (b) at (-2,2) {$\begin{ytableau} 2 & 2 & 4 \\ 4 \end{ytableau}$};
    \node (c) at (0,2)  {$\begin{ytableau} 2 & 2 & 5 \\ 3 \end{ytableau}$};
    \node (d) at (2,2)  {$\begin{ytableau} 2 & 3 & 4 \\ 3 \end{ytableau}$};
    \node (e) at (-3,4)  {$\begin{ytableau} 2 & 3 & 4 \\ 4 \end{ytableau}$};
    \node (f) at (-1,4) {$\begin{ytableau} 2 & 2 & 5 \\ 4 \end{ytableau}$};
    \node (g) at (1,4) {$\begin{ytableau} 2 & 3 & 5 \\ 3 \end{ytableau}$};  
    \node (h) at (3,4) {$\begin{ytableau} 2 & 4 & 4 \\ 3 \end{ytableau}$};     
    \node (i) at (-3,6) {$\begin{ytableau} 2 & 3 & 5 \\ 4 \end{ytableau}$};
    \node (j) at (-1,6) {$\begin{ytableau} 3 & 3 & 4 \\ 4 \end{ytableau}$};
    \node (k) at (1,6) {$\begin{ytableau} 2 & 4 & 4 \\ 4 \end{ytableau}$};
    \node (l) at (3,6) {$\begin{ytableau} 2 & 4 & 5 \\ 3 \end{ytableau}$};
    \node (m) at (-2,8) {$\begin{ytableau} 3 & 3 & 5 \\ 4 \end{ytableau}$};
    \node (n) at (0,8) {$\begin{ytableau} 3 & 4 & 4 \\ 4 \end{ytableau}$};
    \node (o) at (2,8) {$\begin{ytableau} 2 & 4 & 5 \\ 4 \end{ytableau}$};
    \node (p) at (0,10) {$\begin{ytableau} 3 & 4 & 5 \\ 4 \end{ytableau}$};
    \node (v) at (-1,10) {\normalsize $v = $};

    \path [draw=green, ->] (a) edge node[left] {$3$} (b);
    \path [ultra thick, draw=cyan, ->](a) edge node[right] {$4$} (c);
    \path [draw=red, ->](a) edge node[right] {$2$} (d);
    \path [draw=red, ->](b) edge node[left] {$2$} (e);
    \path [draw=cyan, ->](b) edge node[left] {$4$} (f);
    \path [ultra thick, draw=green, ->](c) edge node[right] {$3$} (f);
    \path [draw=red, ->](c) edge node[right] {$2$} (g);
    \path [draw=cyan, ->](d) edge node[left] {$4$} (g); 
    \path [draw=green, ->](d) edge node[right] {$3$} (h); 
    \path [draw=cyan, ->](e) edge node[left] {$4$} (i); 
    \path [draw=red, ->](e) edge node[right] {$2$} (j); 
    \path [ultra thick, draw=red, ->](f) edge node[left] {$2$} (i); 
    \path [draw=green, ->](g) edge node[right] {$3$} (l); 
    \path [draw=green, ->](h) edge node[left] {$3$} (k); 
    \path [draw=cyan, ->](h) edge node[right] {$4$} (l); 
    \path [ultra thick, draw=red, ->](i) edge node[left] {$2$} (m);
    \path [draw= cyan, ->](j) edge node[left] {$4$} (m);
    \path [draw=green, ->] (j) edge node[left] {$3$} (n);
    \path [draw=red, ->] (k) edge node[left] {$2$} (n);
    \path [draw=cyan, ->] (k) edge node[left] {$4$} (o);
    \path [draw=green, ->] (l) edge node[right] {$3$} (o);
    \path [ultra thick, draw=green, ->] (m) edge node[right] {$3$} (p);
    \path [draw=cyan, ->] (n) edge node[left] {$4$} (p);
    \path [draw=red, ->] (o) edge node[left] {$2$} (p);
   \end{tikzpicture}
\caption{Subposet of type $A_{4}$ crystal with highest weight $\lambda  = (3,1,0,0)$} \label{ex1}
\end{figure}

The interval $[u,v]$ in Figure \ref{ex1} is a subposet of the crystal of type $A_{4}$ with highest weight $(3,1,0,0)$. We order the saturated chains in our interval according to lexicographic order on their edge label sequences as we travel up each chain from $u$ to $v$. The critical cells in our lexicographic discrete Morse function come from so-called fully covered saturated chains in the interval. Informally, we have a fully covered saturated chain $C$ from $u$ to $v$ when each rank along $C$, excluding that of $u$ and $v$, is covered by a ``minimal skipped interval". Roughly speaking, we have a skipped interval from $u'$ to $v'$ consisting of all elements strictly between $u'$ and $v'$ along $C$ if there is a lexicographically earlier chain $C'$ from $u'$ to $v'$. If there are no strictly smaller skipped intervals between $u'$ and $v'$ then we have a minimal skipped interval. The technique we are using is a generalization of a lexicographic shelling. We differ from lexicographic shellings as we allow our minimal skipped intervals to cover more than one rank. 

Consider the chain in bold in our example. This chain has label sequence $(4,3,2,2,3)$. Let us explain how this saturated chain is fully covered by looking at it's minimal skipped intervals. For our first minimal skipped interval, instead of traveling up this chain via the edges labeled $4$ and $3$, we could have traveled up the lexicographically earlier segment via the edges labeled $3$ and then $4$. Next, instead of traveling along the edges labeled by the sequence $(3,2,2,3)$, we could have traveled up the lexicographically earlier segment labeled $(2,3,3,2)$. These two minimal skipped intervals cover all proper ranks of our interval and so the chain with label sequence $(4,3,2,2,3)$ is fully covered. This is the only fully covered saturated chain within $[u,v]$. As having a fully covered saturated chain gives rise to a critical cell in our discrete Morse function, we are able to deduce that the M\"obius function, $\mu(u,v)$, of our interval is $-1$. 

We give an algorithm for finding a fully covered saturated chain from $u$ to $v$ in crystals of highest weight representations of finite classical Cartan type where all relations are implied by Stembridge or Sternberg relations when one exists. In the process, we prove that there is at most one such fully covered saturated chain from $u$ to $v$. We note that when a fully covered saturated chain exists, it is not always the lexicographically last chain, though often it is. For such an instance, see Example \ref{type D example}.

We give background information on crystals, partially ordered sets, and discrete Morse functions in Section 2. In Section 3, we point out some immediate consequences of the Stembridge axioms for the simply laced and doubly laced cases. We use the structure of the degree five Sternberg relation to prove that crystals of highest weight representations of types $B_{2}$ and $C_{2}$ are not lattices. In Section 4, we construct lexicographic discrete Morse functions for intervals in crystals of highest weight representations of finite classical Cartan type where all relations among crystal operators are implied by Stembridge or Sternberg relations. This construction allows us to prove the main result, namely that if there is an interval in a crystal of finite classical Cartan type with M\"obius function not equal to $-1, 0,$ or $1$, then there exists a relation among crystal operators within that interval not implied by Stembridge or Sternberg relations. Finally, we give a concrete application of the main result in Section 5 demonstrating how it can lead to the discovery of new relations among crystal operators via computer search. Specifically, we find new relations among crystal operators in crystals of type $C_{3}$. 
 
\textbf{Acknowledgements:} This work is part of the author's PhD research. The author thanks Patricia Hersh for many helpful discussions regarding this project. Additionally, the author thanks Nicolas Thi\'ery for help with SAGE, and Nathan Reading and Ricky Liu for helpful comments and suggestions.

\section{Background and terminology}
\subsection{Crystal bases} 

Crystals bases are combinatorial structures that give information regarding representations of Lie algebras. Each crystal has an associated root system $\Phi$ with index set $I$ and weight lattice $\Lambda$. Let $\{ \alpha_{i} \}_{i \in I}$ be the set of simple roots and $\Lambda^{+}$ be the set of dominant integral weights.
 We will focus on the the root systems of finite type. For more background on root systems, see \cite{Humphreys}.\
\begin{definition}
For a fixed root system $\Phi$ with index set $I$ and weight lattice $\Lambda$, a \textbf{Kashiwara crystal} (crystal for short) of type $\Phi$ is a nonempty set $\mathcal{B}$ together with maps
\begin{subequations}
\begin{align}
        e_{i}, f_{i} &: \mathcal{B} \rightarrow \mathcal{B} \sqcup \{0\},\\
        \varepsilon_{i}, \varphi_{i}&: \mathcal{B} \rightarrow \integers \sqcup \{-\infty\}, \\
        \text{wt} &: \mathcal{B} \rightarrow \Lambda,
\end{align}
\end{subequations}
where $i \in I$ and $0 \notin \mathcal{B}$ is an auxillary element satisfying the following: 
\begin{enumerate}
	\item[(A1)] If $x,y \in \mathcal{B}$ then $e_{i}(y) = x$ if and only if $f_{i}(x) = y$, and in this case we assume
		\begin{equation*} 
			\text{wt}(x) = \text{wt}(y) + \alpha_{i}, \quad \varepsilon_{i}(x) = \varepsilon_{i}(y) -1, \quad \varphi_{i}(x) = \varphi_{i}(y) + 1
			\end{equation*}
	\item[(A2)] We require that 
		\begin{equation*}
		\varphi_{i}(x) = \langle \text{wt}(x), \alpha_{i}^{\vee} \rangle + \varepsilon_{i}(x)
		\end{equation*} 
		for all $x \in \mathcal{B}$ and $i \in I$. In particular, if $\varphi(x) = -\infty$, then $\varepsilon_{i}(x) = -\infty$ as well. If $\varphi_{i}(x) = -\infty$ then we require $e_{i}(x) = f_{i}(x) = 0.$
\end{enumerate}
\end{definition} 
The map wt is the \textit{weight map}. The operators $e_{i}, f_{i}$ are called \textit{Kashiwara or crystal operators}, and the maps $\varphi_{i}, \varepsilon_{i}$ are called \textit{string lengths}. 

We will only be referring to crystals of highest weight representations of finite type and our main results will hold for crystals of highest weight representations of the classical Cartan algebras $A_{n}, B_{n}, C_{n},$ and $D_{n}$. For a dominant integral weight $\lambda \in \Lambda^{+}$, we let $\mathcal{B} = \mathcal{B}_{\lambda}$ denote the crystal of the irreducible representation $V(\lambda)$ of highest weight $\lambda$.

Given any crystal $\mathcal{B}$, we can associate to it a \textit{crystal graph}. 

 \begin{definition}
A \textit{crystal graph} of some crystal $\mathcal{B}$ is a directed, edge colored (with colors $i \in I$) graph with vertices in $\mathcal{B}$ satisfying the following:
\begin{enumerate}
\item all monochromatic directed paths have finite length,
\item for every vertex $x \in \mathcal{B}$ and $i \in I$, there is at most one edge $z \xrightarrow{i} x$, and dually, at most one edge $x \xrightarrow{i} y$. Here, we say that $z=e_{i}(x)$ and $y=f_{i}(x)$.
\end{enumerate}
\end{definition}





For crystals of finite, classical Cartan type there is a combinatorial model, developed in \cite{KashiwaraNakashima}, where the vertices of the crystal graph are represented by tableaux. While this combinatorial realization is not needed for the proofs, it may help readers unfamiliar with crystals to be able to compute and understand examples.

For crystals of type $A_{n}$, a dominant weight $\lambda$, can be viewed as a partition, i.e. $\lambda = (\lambda_{1}, \lambda_{2}, ... , \lambda_{n})$ where $\lambda_{1} \geq \lambda_{2} \geq \cdots \geq \lambda_{n}.$ We can represent vertices of our crystal graph by semistandard Young tableaux. A semistandard Young tableaux of shape $\lambda$ is a filling of a partition of shape $\lambda$ where entries across rows read from left to right are weakly increasing, and strictly increasing along columns read from top to bottom. Given a tableau $T$ in our crystal graph of type $A_{n}$, we can read off the weight of $T$ as follows: \[\text{wt}(T) = (\gamma_{1}, \gamma_{2}, ... ,\gamma_{n+1}),\] where $\gamma_{i}$ is the number of occurrences of $i$ in $T$. For crystals of types $B_{n}, C_{n},$ and $D_{n}$, vertices can be represented by Kashiwara-Nakashima tableaux. For a description of these tableaux see e.g \cite{BumpSchilling, KashiwaraNakashima}.

We describe the action of the crystal operators $f_{i}$ and $e_{i}$ on a given tableau via the signature rule. To begin, we describe the signature rule for crystals of type $A_{n}$. Let $x$ be a vertex of $\mathcal{B}$, a crystal of type $A_{n}$. 
\begin{definition}
The \textbf{reading word} of $x$, denoted $r(x)$, is the word obtained by reading each column from bottom to top and reading columns from left to right.
\end{definition}
\begin{definition}
The \textbf{$i$-signature} of $x$ is the subword of $r(x)$ consisting of only the letters $i$ and $i+1$.
\end{definition}
We replace each appearance of $i$ in the $i$-signature with the symbol $+$ and each appearance of $i+1$ with the symbol $-$. Then we repeatedly remove any adjacent pairs of $(-+)$ as long as this is possible. In the end, we are left with the reduced $i$-signature of $x$, denoted $\rho_{i}(x)$, which is of the form:
\begin{equation*}
\rho_{i}(x) = \underbrace{+ + \cdots +}_{a} \underbrace{- - \cdots -}_{b}.
\end{equation*}
Then, if $a > 0$ we obtain $f_{i}(x)$ from $x$ by changing the $i$ in $x$ that corresponds to the rightmost $+$ in $\rho_{i}(x)$ to an $i+1$. If $a=0$ then $f_{i}(x) = 0$. Similarly, if $b > 0$ then $e_{i}(x)$ is obtained by changing the $i+1$ in $x$ that corresponds to the leftmost $i+1$ in $\rho_{i}(x)$ to an $i$. If $b=0$, then $e_{i}(x) = 0$. For types $B_{n}, C_{n},$ and $D_{n}$, the signature rule for the applications of $f_{i}$ and $e_{i}$ are similar. For details see e.g. \cite{BumpSchilling, HongKang}.

\subsection{Stembridge axioms and Sternberg relations}
Given any integrable highest weight representation of a symmetrizable quantum Kac Moody algebra, there is a crystal. However, not all crystals arise in this way. Stembridge was interested in finding a simple set of axioms that characterize such crystals. In \cite{Stembridge}, this is done for simply laced types. In addition to characterizing crystals arising from integrable highest weight representations of simply laced type, Stembridge shows that these axioms also hold in the doubly laced case. However, they do not provide a complete characterization. In \cite{Sternberg}, Sternberg proves that for crystals of doubly laced type, additional local relations exist beyond those implied by the Stembridge axioms. For a complete characterization of doubly laced crystals see \cite{Danilov, Tsuchioka}. However, for this paper, we only need the Stembridge axioms and Sternberg relations given in \cite{Sternberg}. Now, we introduce some notation and the axioms as seen in \cite{Stembridge}.

Throughout this section, we will let $A = [a_{ij}]_{i,j \in I}$ be the Cartan matrix of a Kac Moody algebra $\mathfrak{g}$, where $I$ is some finite index set. We recall the following definition from \cite{Stembridge}:
\begin{definition} We say that an edge colored directed graph, $X$, is \textit{A-regular} if the axioms $(S1)-(S6)$ and $(S5')-(S6')$ are satsified:
\begin{enumerate}
\item[(S1)] All monochromatic paths in $X$ have finite length.
\item[(S2)] For any vertex $x \in X$, there is at most one incoming edge, $z \xrightarrow{i} x$, colored $i$ and at most one outgoing edge, $x \xrightarrow{i} y$ colored $i$. Here, we say that $z = e_{i}(x)$ and $y = f_{i}(x)$, where $e_{i}$ and $f_{i}$ are \textit{crystal operators}.
\end{enumerate}
We can define the \textit{i-string through x} to be:
\begin{equation*}
f_{i}^{-d}(x) \rightarrow \cdots \rightarrow f_{i}^{-1}(x) \rightarrow x \rightarrow f_{i}(x) \rightarrow \cdots \rightarrow f_{i}^{r}(x).
\end{equation*} 
We can then define the \textit{i-rise of x} to be $\varepsilon_{i}(x) := r$ and the \textit{i-depth of x} to be $\delta_{i}(x) := -d$. To measure the effect of the crystal operators $e_{i}$ and $f_{i}$ on the $j$-rise and $j$-depth of each vertex, we define the \textit{difference operators} $\Delta_{i}$ to be:
\begin{equation*}
\Delta_{i}\delta_{j}(x) = \delta_{j}(e_{i}(x)) - \delta_{j}(x), \quad  \Delta_{i}\varepsilon_{j}(x) = \varepsilon_{j}(e_{i}(x)) - \varepsilon_{j}(x),
\end{equation*}
whenever $e_{i}(x)$ is defined, and
\begin{equation*}
\nabla_{i}\delta_{j}(x) = \delta_{j}(x) - \delta_{j}(f_{i}(x)), \quad \nabla_{i}\varepsilon_{j}(x) = \varepsilon_{j}(x) - \varepsilon_{j}(f_{i}(x)),
\end{equation*}
whenever $f_{i}(x)$ is defined.
\begin{enumerate}
\item[(S3)] For a fixed $x \in X$ and $i,j \in I$ such that $e_{i}(x)$ is defined, we require $\Delta_{i}\delta_{j}(x) + \Delta_{i}\varepsilon_{j}(x) = a_{ij}$,
\item[(S4)] For a fixed $x \in X$ and $i,j \in I$ such that $e_{i}(x)$ is defined, we require $\Delta_{i}\delta_{j}(x) \leq 0$ and $\Delta_{i}\varepsilon_{j}(x) \leq 0$.
\item[(S5)] For a fixed $x \in X$ such that $e_{i}(x)$ and $e_{j}(x)$ are both defined, we require that $\Delta_{i}\delta_{j}(x) = 0$ implies $e_{i}e_{j}(x) = e_{j}e_{i}(x)$ and $\nabla_{j}\varepsilon_{i}(y) = 0$ where $y = e_{i}e_{j}(x) = e_{j}e_{i}(x)$.
\item[(S6)] For a fixed $x \in X$ such that $e_{i}(x)$ and $e_{j}(x)$ are both defined, we require that $\Delta_{i}\delta_{j}(x) = \Delta_{j}\delta_{i}(x) = -1$ implies $e_{i}e_{j}^{2}e_{i}(x) = e_{j}e_{i}^{2}e_{j}(x)$ and $\nabla_{i}\varepsilon_{j}(y) = \nabla_{j}\varepsilon_{i}(y) = -1$ where $y = e_{i}e_{j}^{2}e_{i}(x) = e_{j}e_{i}^{2}e_{j}(x)$.  
\end{enumerate}

Dually, we have the additional two requirements for $X$ to be \textit{A-regular},
\begin{enumerate}
\item[(S5')] $\nabla_{i}\varepsilon_{j}(x) = 0$ implies $f_{i}f_{j}(x) = f_{j}f_{i}(x)$ and $\Delta_{j}\delta_{i}(y) = 0$ where $y = f_{i}f_{j}(x) = f_{j}f_{i}(x)$.
\item [(S6')] $\nabla_{i}\varepsilon_{j}(x) = \nabla_{j}\varepsilon_{i}(x) = -1$ implies $f_{i}f_{j}^{2}f_{i}(x) = f_{j}f_{i}^{2}f_{j}(x)$ and $\Delta_{i}\delta_{j}(y) = \Delta_{j}\delta_{i}(y)$ where $y = f_{i}f_{j}^{2}f_{i}(x) = f_{j}f_{i}^{2}f_{j}(x)$.
\end{enumerate}

\end{definition}
In \cite{Stembridge}, Stembridge proves the following:
\begin{thm}[\cite{Stembridge}]
If $A$ is a symmetrizable Cartan matrix, then the crystal graph of the irreducible highest weight $U_{q}(A)$ module of highest weight $\lambda$, $V(\lambda)$, is A-regular.
\end{thm}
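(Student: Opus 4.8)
The plan is to split the axioms into those that are pure bookkeeping and those that carry genuine representation-theoretic content, and to handle the latter by localizing to rank two. Axioms (S1) and (S2) are immediate, since finiteness of monochromatic paths and uniqueness of the incoming/outgoing edge of each color are built into the definition of a crystal graph. Axiom (S3) is a weight computation: from (A2) together with the definitions of the $i$-rise and $i$-depth one extracts the identity $\langle \text{wt}(x), \alpha_j^\vee\rangle = \varepsilon_j(x) + \delta_j(x)$. Applying this both at $x$ and at $e_i(x)$ and subtracting, the string-length contributions telescope and the weight shift $\text{wt}(e_i(x)) = \text{wt}(x) + \alpha_i$ yields
\begin{equation*}
\Delta_i\delta_j(x) + \Delta_i\varepsilon_j(x) = \langle \text{wt}(e_i(x)) - \text{wt}(x),\, \alpha_j^\vee\rangle = \langle \alpha_i, \alpha_j^\vee\rangle = a_{ij},
\end{equation*}
which is exactly (S3); the dual statement for $\nabla$ is identical.

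The remaining axioms (S4), (S5), (S6) and the duals (S5$'$), (S6$'$) only ever reference the operators $e_i, f_i, e_j, f_j$ and the color-$i$ and color-$j$ string data at a single pair of colors $\{i,j\}$. The structural observation I would exploit is therefore that these are \emph{local in the color pair}: fixing $i \neq j$ and discarding all edges whose color lies outside $\{i,j\}$, each connected component of the resulting subgraph is a crystal for the rank-two Levi subalgebra governed by the submatrix $\left(\begin{smallmatrix} 2 & a_{ij} \\ a_{ji} & 2\end{smallmatrix}\right)$. Since the axioms are preserved under restriction to such a component, it suffices to verify (S4)--(S6) inside rank-two crystals of types $A_1\times A_1$, $A_2$, $B_2$, and $G_2$.

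For the rank-two verification the cleanest route is Kashiwara's existence and uniqueness theorem for crystal bases together with the tensor product rule. Concretely, I would realize $\mathcal{B}_\lambda$ as a connected component of a tensor power of a basic crystal on which $e_i, f_i$ act by the explicit signature (bracketing) rule; in the classical types $A_n, B_n, C_n, D_n$ relevant to this paper the basic crystal is the vector representation, and the tableau models already recalled above make this computation fully concrete. One then checks $A$-regularity directly on the basic crystal (finitely many vertices) and shows that $A$-regularity is inherited under tensor products and under passage to a connected component. The monotonicity assertion (S4), namely $\Delta_i\delta_j(x)\le 0$ and $\Delta_i\varepsilon_j(x)\le 0$, follows from the bracketing rule, since applying $e_i$ can only shorten the relevant color-$j$ brackets; combined with (S3) this pins down the two summands individually.

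The genuine difficulty is concentrated in (S5) and (S6): unlike (S3) these are not forced by weights, and they encode the braid relations among the Kashiwara operators. The main obstacle I expect is controlling the cancellations in the reduced $i$- and $j$-signatures across a tensor product, so that $e_i, e_j$ genuinely commute when $\Delta_i\delta_j(x)=0$ and satisfy $e_ie_j^2e_i = e_je_i^2e_j$ when $\Delta_i\delta_j(x)=\Delta_j\delta_i(x)=-1$, together with the asserted post-conditions on $\nabla\varepsilon$ at the common endpoint $y$. Verifying that the hypotheses of (S6) isolate exactly the $A_2$-type local configuration, and tracking the signature cancellations uniformly over the rank-two components, is the step demanding the most care; everything else is either definitional (S1, S2), a one-line weight telescoping (S3), or monotonicity of bracketing (S4).
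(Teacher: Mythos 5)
This theorem is not proved in the paper at all: it is quoted from Stembridge's paper \cite{Stembridge} and used as a black box (the present paper only builds on its consequences). So there is no internal proof to compare against, and your proposal has to stand on its own as a reconstruction of Stembridge's argument. Your outline gets the easy parts right, and in fact they match the standard route: (S1)--(S2) are definitional, (S3) is exactly the weight-telescoping computation you give, and the reduction to a single pair of colors via restriction to the rank-two Levi subalgebra is the correct first step.

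There are, however, two genuine gaps in the remainder. First, the theorem is asserted for an \emph{arbitrary} symmetrizable Cartan matrix, and the rank-two subalgebra attached to a color pair $\{i,j\}$ is governed by the submatrix with off-diagonal entries $a_{ij}, a_{ji}$; this subalgebra need not be of finite type. Whenever $a_{ij}a_{ji} \geq 4$ it is of affine or indefinite type, so your case list $A_1 \times A_1$, $A_2$, $B_2$, $G_2$ does not exhaust the possibilities, and in the missing cases there is no finite ``basic crystal,'' no vector representation, and no tableau or signature model to compute with. Second, and more seriously, the engine you propose even for the finite rank-two cases --- verify $A$-regularity on the basic crystal and show it is ``inherited under tensor products and under passage to a connected component'' --- is not a proof. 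Passage to a component is harmless since the axioms are local, but tensor-product inheritance of (S5), (S6) and their duals is precisely the hard content of the theorem: controlling the signature cancellations across tensor factors is what you yourself flag as ``the step demanding the most care,'' i.e., it is left undone. Worse, the cheap version of the inheritance claim (components of a tensor product of crystals of representations are again crystals of representations, hence regular) is circular, because it invokes the theorem being proved. What is actually needed is either Kashiwara's machinery (grand loop argument) or Littelmann's path model, both valid for all symmetrizable types, to carry out the rank-two verification, or else a direct combinatorial proof that the axioms are stable under the tensor product rule; neither is supplied. In short, the proposal correctly isolates where the difficulty lives, but does not resolve it.
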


All crystal graphs in this paper are that of an irreducible highest weight representation and therefore are $A$-regular so the Stembridge axioms hold.

\begin{definition}
If we have an $x$ such that (S5') holds, then we say there is a \textit{degree two Stembridge relation} upward from $x$, namely $f_{i}f_{j}(x) = f_{j}f_{i}(x)$. Similarly, if we have an $x$ such that (S6') holds, we say there is a \textit{degree four Stembridge relation} upward from $x$, namely $f_{i}f_{j}^{2}f_{i}(x) = f_{j}f_{i}^{2}f_{j}(x)$. Dually, if we have an $x$ such that (S5) holds, then we say there is a degree two Stembridge relation downward from $x$. Similarly, if we have an $x$ such that (S6) holds, we say there is a degree four Stembridge relation downward from $x$. 
\end{definition}
Below in Figure \ref{stembridgerelations} are visualizations of the degree two and degree four Stembridge relations. 
\begin{figure}[h]
\centering
\begin{tikzpicture}
[auto,
vertex/.style={circle,draw=black!100,fill=black!,thick,inner sep=1pt,minimum size=.01mm}]
    \node (0) at (0,-1) {(a)};
    \node (1) at (0,0)  [vertex, label=below:\tiny{$x$}] {};
    \node (2) at (-.5,.75) [vertex] {};
    \node (3) at (.5,.75) [vertex] {};
    \node (4) at (0,1.5) [vertex] {};
    
    \path [->] (1) edge node[left] {\tiny{$i$}} (2);
    \path [->](1) edge node[right] {\tiny{$j$}} (3);
    \path [->](2) edge node[left] {\tiny{$j$}} (4);
    \path [->](3) edge node[right] {\tiny{$i$}} (4);
    
\end{tikzpicture}
\qquad
\begin{tikzpicture}
[auto,
vertex/.style={circle,draw=black!100,fill=black!,thick,inner sep=0pt,minimum size=1mm}]
    \node (0) at (0,-1) {(b)};    
    \node (1) at (0,0) [vertex, label=below:\tiny{$x$}] {};
    \node (2) at (-.5,.75) [vertex] {};
    \node (3) at (.5,.75) [vertex] {};
    \node (4) at (-.5,1.5) [vertex] {};
    \node (5) at (.5,1.5) [vertex] {};
    \node (6) at (-.5,2.25) [vertex] {};
    \node (7) at (.5,2.25) [vertex] {};
    \node (8) at (0,3) [vertex] {};

    \path [->] (1) edge node[left] {\tiny{$i$}} (2);
    \path [->](1) edge node[right] {\tiny{$j$}} (3);
    \path [->](2) edge node[left] {\tiny{$j$}} (4);
    \path [->](3) edge node[right] {\tiny{$i$}} (5);
    \path [->](4) edge node[left] {\tiny{$j$}} (6);
    \path [->](5) edge node[right] {\tiny{$i$}} (7);
    \path [->](6) edge node[left] {\tiny{$i$}} (8);
    \path [->](7) edge node[right] {\tiny{$j$}} (8);
    
\end{tikzpicture}
\caption{(a) The degree two Stembridge relation, and (b) the degree four Stembridge relation.} \label{stembridgerelations}
\end{figure}
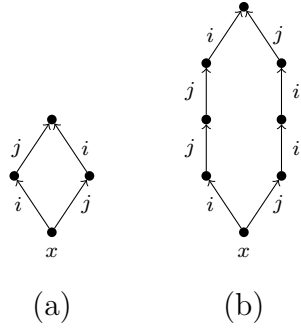

We now consider the doubly laced case. 
In \cite{Sternberg}, Sternberg proves a conjecture of Stembridge providing a description of the local structure of crystals arising from doubly laced Kac Moody algebras. 

\begin{thm}[\cite{Sternberg}]
Let $\mathfrak{g}$ be a doubly laced algebra, let $\mathcal{B}$ be the crystal graph of an irreducible highest weight module of $\mathfrak{g}$, and let $x$ be a vertex of $\mathcal{B}$ such that $f_{i}(x)$ is defined and $f_{j}(x)$ is defined where $f_{i}$ and $f_{j}$ are two distinct crystal operators. Then exactly one of the following is true:
\begin{enumerate}
\item $f_{i}f_{j}(x) = f_{j}f_{i}(x)$,
\item $f_{i}f_{j}^{2}f_{i}(x) = f_{j}f_{i}^{2}f_{j}(x)$,
\item $f_{i}f_{j}^{3}f_{i}(x) = f_{j}f_{i}f_{j}f_{i}f_{j}(x) = f_{j}^{2}f_{i}^{2}f_{j}(x)$,
\item $f_{i}f_{j}^{3}f_{i}^{2}f_{j}(x) = f_{i}f_{j}^{2}f_{i}f_{j}f_{i}f_{j}(x) = f_{j}f_{i}^{2}f_{j}^{3}f_{i}(x) = f_{j}f_{i}f_{j}f_{i}f_{j}^{2}f_{i}(x)$.
\end{enumerate}
The equivalent statement with the crystal operators $e_{i}$ and $e_{j}$ also holds.
\end{thm}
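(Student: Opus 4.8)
The plan is to localize everything at $x$ through the difference operators and then run a case analysis on the Cartan entries $a_{ij}, a_{ji}$. Since $\mathfrak{g}$ is doubly laced and $i \neq j$, up to relabeling the two nodes we may assume $(a_{ij}, a_{ji})$ is one of $(0,0)$, $(-1,-1)$, or $(-1,-2)$. First I would record the upward forms of (S3) and (S4). Because $f_i(x)$ is defined and $e_i(f_i(x)) = x$, the identities $\nabla_i \delta_j(x) = \Delta_i \delta_j(f_i(x))$ and $\nabla_i \varepsilon_j(x) = \Delta_i \varepsilon_j(f_i(x))$ let me transport (S3) and (S4) from $f_i(x)$ to obtain
\begin{equation*}
\nabla_i \delta_j(x) + \nabla_i \varepsilon_j(x) = a_{ij}, \qquad \nabla_i\delta_j(x) \leq 0, \quad \nabla_i \varepsilon_j(x) \leq 0,
\end{equation*}
and symmetrically with $i,j$ interchanged. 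Writing $p = \nabla_i \varepsilon_j(x)$ and $q = \nabla_j \varepsilon_i(x)$, these constraints force $p \in \{0,-1\}$ and $q \in \{0,-1,-2\}$ in the doubly laced case $(a_{ij},a_{ji}) = (-1,-2)$ (and $p,q \in \{0,-1\}$ when $(a_{ij},a_{ji})=(-1,-1)$, while $p=q=0$ when $(a_{ij},a_{ji})=(0,0)$).

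The easy outcomes come directly from Stembridge. If $p = 0$ then (S5') gives $f_i f_j(x) = f_j f_i(x)$, and likewise if $q = 0$; this is outcome (1). If $(p,q) = (-1,-1)$ then (S6') gives $f_i f_j^2 f_i(x) = f_j f_i^2 f_j(x)$, which is outcome (2). Hence in the $(0,0)$ and $(-1,-1)$ Cartan cases (the $A_1 \times A_1$ and $A_2$ sub-situations) only outcomes (1) and (2) occur, recovering the simply laced picture. The entire content of the theorem is therefore concentrated in the single remaining configuration $(a_{ij}, a_{ji}) = (-1,-2)$ with $(p,q) = (-1,-2)$.

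For this configuration I would restrict $\mathcal{B}$ to the rank-two Levi subalgebra generated by the nodes $i,j$: the connected component of $x$ under the $i$- and $j$-colored edges is a crystal for the root system with Cartan matrix $\left(\begin{smallmatrix} 2 & -1 \\ -2 & 2 \end{smallmatrix}\right)$, i.e.\ of type $B_2 = C_2$, and hence decomposes as a direct sum of highest weight $B_2$ crystals. The core of the argument is then to bootstrap the Stembridge axioms within this rank-two crystal: starting from $x$ I would apply $f_i, f_j$ in the two extremal orders, recomputing $(\nabla\varepsilon, \nabla\delta)$ from (S3)--(S4) at each new vertex and invoking (S5'), (S6') whenever a secondary square closes up, so as to thread out the candidate words of (3) and (4) and verify that they terminate at a common vertex. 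The dichotomy between (3) (degree five) and (4) (degree seven) is governed by one further local invariant at an intermediate vertex --- concretely a string length such as $\varphi_i$ after the first step or two --- which dictates whether the extremal paths reconverge after five operators or must be extended to seven.

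The step I expect to be the genuine obstacle is precisely this last one: the higher relations (3) and (4) are not formal consequences of (S1)--(S6) --- this is exactly why the theorem goes beyond Stembridge --- so the reconvergence cannot be forced purely axiomatically and must be anchored in the explicit structure of $B_2 = C_2$ crystals. I would therefore settle the $(-1,-2)$ case by enumerating, via Kashiwara--Nakashima tableaux, the finitely many local neighborhoods of a vertex at which both $f_i$ and $f_j$ act, and checking that they realize exactly the patterns (3) and (4). Mutual exclusivity of (1)--(4) then follows because the outcome is a function of the local invariants $(p,q)$ together with the secondary string length, and distinct outcomes place different vertices at intermediate ranks, so at most one can hold; existence is furnished by the case analysis, since every admissible $(p,q)$ yields one of the four. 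Finally, the corresponding statement for $e_i, e_j$ follows by the dual argument, replacing $\nabla$ by $\Delta$ and (S5'), (S6') by (S5), (S6) throughout.
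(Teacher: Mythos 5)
First, a point of orientation: this theorem is not proved in the paper at all --- it is imported verbatim from \cite{Sternberg} (Sternberg's proof of Stembridge's conjecture), so your proposal can only be measured against Sternberg's argument, not against anything in this text. The portions of your plan that rest on the Stembridge axioms are correct: transporting (S3)--(S4) from $f_i(x)$ to $x$ via $\nabla_i\delta_j(x)=\Delta_i\delta_j(f_i(x))$ is legitimate, the resulting constraints $p=\nabla_i\varepsilon_j(x)\in\{0,-1\}$ and $q=\nabla_j\varepsilon_i(x)\in\{0,-1,-2\}$ in the $(a_{ij},a_{ji})=(-1,-2)$ case are right, the dispatch of outcome (1) when $p=0$ or $q=0$ via (S5') and of outcome (2) when $(p,q)=(-1,-1)$ via (S6') is fine, and the restriction to the rank-two $\{i,j\}$-component (a disjoint union of highest weight $B_2$ crystals) is a sound reduction.

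The genuine gap is that everything distinguishing this theorem from Stembridge's work is concentrated in the remaining case $(p,q)=(-1,-2)$, and there your proposal is a plan rather than a proof. You correctly acknowledge that relations (3) and (4) are \emph{not} formal consequences of (S1)--(S6'), so the ``bootstrapping'' of the axioms cannot close the argument; you then defer to an enumeration of ``finitely many local neighborhoods'' in Kashiwara--Nakashima tableaux, but this enumeration is neither executed nor is its finiteness established. There are infinitely many dominant weights $\lambda$ and infinitely many vertices in the $B_2$ crystals $\mathcal{B}_\lambda$, so one must first prove that the local $\{i,j\}$-structure above $x$ is determined by a bounded amount of data (string lengths capped at small values, or a bounded window of the tableau) --- and that reduction is essentially the entire content of Sternberg's paper, not a routine check. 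Likewise, the invariant you claim separates the degree five from the degree seven relation is left as ``a string length such as $\varphi_i$ after the first step or two,'' i.e.\ unspecified, whereas identifying and controlling exactly this datum is the heart of the matter. Finally, your mutual-exclusivity argument needs repair even in the easy cases: read as literal equations, (1) and (2) can hold simultaneously (for a pair with $a_{ij}=a_{ji}=0$ and both strings of length at least two, every displayed product equals $f_i^2f_j^2(x)\neq 0$), so ``exactly one'' must be interpreted as exactly one of the local \emph{configurations} (with the intermediate vertices distinct, as in the paper's figures) occurring, and exclusivity must then be derived from that distinctness rather than from your assertion that ``the outcome is a function of the local invariants.''
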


\begin{definition}
If we have $x \in \mathcal{B}$ such that \[f_{i}f_{j}^{3}f_{i}(x) = f_{j}f_{i}f_{j}f_{i}f_{j}(x) = f_{j}^{2}f_{i}^{2}f_{j}(x),\] then we say there is a \textit{degree five Sternberg relation} upward from $x$. Similarly, if we have $x \in \mathcal{B}$ such that \[f_{i}f_{j}^{3}f_{i}^{2}f_{j}(x) = f_{i}f_{j}^{2}f_{i}f_{j}f_{i}f_{j}(x) = f_{j}f_{i}^{2}f_{j}^{3}f_{i}(x) = f_{j}f_{i}f_{j}f_{i}f_{j}^{2}f_{i}(x),\] then we say that there is a \textit{degree seven Sternberg relation} upward from $x$. Dually, when these relations occur involving the $e_{i}$'s, we say we have a degree five or degree seven Sternberg relation downward from $x$. 
\end{definition}
See Figure \ref{sternbergrelations} for visualizations of the degree five and degree seven Sternberg relations.

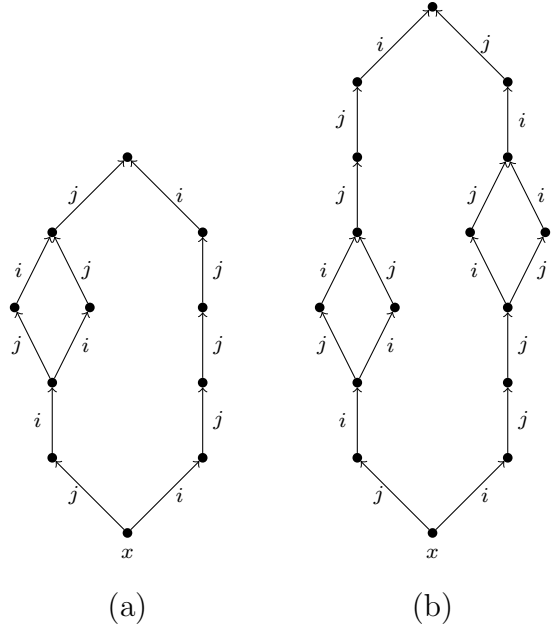
\begin{figure}[h]
\centering
	\begin{tikzpicture}
[auto,
vertex/.style={circle,draw=black!100,fill=black!,thick,inner sep=0pt,minimum size=1mm}]
    \node (0) at (0,-1) {(a)}; 
    \node (a) at (0,0) [vertex, label=below:\tiny{$x$}] {};
    \node (b) at (1,1) [vertex] {};
    \node (c) at (1,2) [vertex] {};
    \node (d) at (1,3) [vertex] {};
    \node (e) at (1,4) [vertex] {};
    \node (f) at (0,5) [vertex] {};
    \node (g) at (-1,1) [vertex] {};
    \node (h) at (-1,2) [vertex] {};    
    \node (i) at (-.5,3) [vertex] {};
    \node (j) at (-1.5,3) [vertex] {};
    \node (k) at (-1,4) [vertex] {};

    \path [->] (a) edge node[right] {\tiny{$i$}} (b);
    \path [->](b) edge node[right] {\tiny{$j$}} (c);
    \path [->](c) edge node[right] {\tiny{$j$}} (d);
    \path [->](d) edge node[right] {\tiny{$j$}} (e);
    \path [->](e) edge node[right] {\tiny{$i$}} (f);
    \path [->](a) edge node[left] {\tiny{$j$}} (g);
    \path [->](g) edge node[left] {\tiny{$i$}} (h);
    \path [->](h) edge node[right] {\tiny{$i$}} (i); 
    \path [->](h) edge node[left] {\tiny{$j$}} (j); 
    \path [->](i) edge node[right] {\tiny{$j$}} (k); 
    \path [->](j) edge node[left] {\tiny{$i$}} (k); 
    \path [->](k) edge node[left] {\tiny{$j$}} (f); 
\end{tikzpicture}
\qquad
\begin{tikzpicture}
[auto,
vertex/.style={circle,draw=black!100,fill=black!,thick,inner sep=0pt,minimum size=1mm}]
    \node (0) at (0,-1) {(b)}; 
    \node (a) at (0,0) [vertex, label=below:\tiny{$x$}] {};
    \node (b) at (1,1) [vertex] {};
    \node (c) at (1,2) [vertex] {};
    \node (d) at (1,3) [vertex] {};
    \node (e) at (.5,4) [vertex] {};
    \node (f) at (1.5,4) [vertex] {};
    \node (g) at (1,5) [vertex] {};
    \node (h) at (1,6) [vertex] {};    
    \node (i) at (0,7) [vertex] {};
    \node (k) at (-1,1) [vertex] {};
    \node (l) at (-1,2) [vertex] {};
    \node (m) at (-.5,3) [vertex] {};    
    \node (n) at (-1.5,3) [vertex] {};
    \node (o) at (-1,4) [vertex] {};
    \node (p) at (-1,5) [vertex] {};
    \node (q) at (-1,6) [vertex] {};
    
    \path [->] (a) edge node[right] {\tiny{$i$}} (b);
    \path [->](b) edge node[right] {\tiny{$j$}} (c);
    \path [->](c) edge node[right] {\tiny{$j$}} (d);
    \path [->](d) edge node[right] {\tiny{$j$}} (f);
    \path [->](d) edge node[left] {\tiny{$i$}} (e);
    \path [->](e) edge node[left] {\tiny{$j$}} (g);
    \path [->](f) edge node[right] {\tiny{$i$}} (g);
    \path [->](g) edge node[right] {\tiny{$i$}} (h); 
    \path [->](h) edge node[right] {\tiny{$j$}} (i); 
    \path [->](a) edge node[left] {\tiny{$j$}} (k); 
    \path [->](k) edge node[left] {\tiny{$i$}} (l); 
    \path [->](l) edge node[right] {\tiny{$i$}} (m); 
    \path [->](l) edge node[left] {\tiny{$j$}} (n); 
    \path [->](m) edge node[right] {\tiny{$j$}} (o); 
    \path [->](n) edge node[left] {\tiny{$i$}} (o); 
    \path [->](o) edge node[left] {\tiny{$j$}} (p);
    \path [->](p) edge node[left] {\tiny{$j$}} (q);
    \path [->](q) edge node[left] {\tiny{$i$}} (i);
\end{tikzpicture}
\caption{(a) The degree five Sternberg relation, and (b) the degree seven Sternberg relation.} \label{sternbergrelations}
\end{figure}

\subsection{Basics of partially ordered sets (posets)} 
A \textit{partially ordered set} $P$ (or poset) is a set together with a partial order $\leq$ such that the partial order satisfies the following:
\begin{enumerate}
\item reflexivity: for all $s \in P, s \leq s$.
\item antisymmetry: for $s, t \in P$, if $s \leq t$ and $t \leq s$, then $s = t$.
\item transitivity: for $s, t, u \in P$, if $s \leq t$ and $t \leq u$, then $s \leq u$. 
\end{enumerate}
Given a subset $Q\subseteq P$, we say that $Q$ is an \textit{induced subposet} of $P$ if for $s, t \in Q$, we have $s \leq t$ in $Q$ if and only if $s \leq t$ in $P$ as well. In this paper, when we say subposet we mean an induced subposet. Often, a poset is generated by \textit{cover relations} or \textit{covers}, denoted by $u \lessdot v$ or $u \prec v$, where we say that $v$ covers $u$ if $ u < v$ and there is no element $w \in P$ such that $u < w < v$. An (open) \textit{interval} is a subposet of $P$ of the form $(u,v) = \{s \in P : u < s < v \}$ defined whenever $u < v$, whereas a closed interval is defined to be $[u,v] = \{s \in P : u \leq s \leq v \}$. We say that $P$ has a \textit{minimum element}, usually denoted $\hat{0}$, if there exists some element $ \hat{0} \in P$ such that $\hat{0} \leq u$ for all $u \in P$. Similarly, $P$ has a \textit{maximum element}, usually denoted $\hat{1}$ if there exists a $\hat{1} \in P$ such that $ u \leq \hat{1}$ for all $u \in P$. A \textit{chain} is a poset in which any two elements are comparable. A subset $C$ of $P$ is a chain if it is a chain when considered as a subposet of $P$. A \textit{saturated chain} from $u$ to $v$ is a series of cover relations $u=u_{0} \lessdot u_{1} \lessdot \dotsb \lessdot u_{k} = v$. We say that a finite poset is \textit{graded} if for all $u \leq v$, every saturated chain from $u$ to $v$ has the same number of cover relations, and we call this number the \textit{rank} of the interval $[u,v]$. The \textit{rank of an element} $x \in P$ is the rank of the interval $[\hat{0}, x]$. The \textit{Hasse diagram} of a finite poset $P$ is the graph whose vertices are elements of $P$ with an edge between $x$ and $y$ whenever $x \lessdot y$.
 

Recall that the \textit{M\"obius function}, $\mu_P$ of a poset $P$ may be defined recursively as follows:\ $\mu(u,u) = 1$, for all $u \in P$, and $\mu(u,v) = - \sum_{u \leq t < v} \mu(u,t)$, for all $u < v \in P$. Given any poset $P$, we can construct the \textit{order complex} $\Delta (P)$, which  is the abstract simplicial complex whose $i$-dimensional faces are the chains $x_{0} < x_{1} < \dotsb < x_{i}$ of comparable elements in $P$. Let $\Delta_{P}(u,v)$ denote the order complex of the subposet consisting of the open interval $(u,v)$. One reason to be interested in the order complex of a poset is the connection between the M\"obius function of a poset $P$ and the Euler characteristic of the order complex $\Delta(P)$, discussed e.g. in \cite{Rota}.
\begin{thm} \label{mobiuseuler}
$\mu_{P}(\hat{0},\hat{1}) = \tilde{\chi}(\Delta(P))$.
\end{thm}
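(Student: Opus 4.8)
The plan is to prove this as the classical theorem of Philip Hall, recovering the M\"obius function as a reduced Euler characteristic via the incidence algebra. Throughout I assume $P$ has a minimum $\hat{0}$ and a maximum $\hat{1}$ (adjoining them formally if necessary), and I read $\Delta(P)$ as the order complex of the open interval $(\hat{0},\hat{1})$, that is, of the proper part $P \setminus \{\hat{0},\hat{1}\}$. This is the only interpretation under which the identity can hold, since the order complex of a poset possessing a global minimum or maximum is a cone and hence has vanishing reduced Euler characteristic; so I would begin by recording this convention explicitly.

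First I would work inside the incidence algebra of $P$. Writing $\delta$ for the Kronecker delta (the multiplicative identity) and $\zeta$ for the zeta function, where $\zeta(u,v)=1$ when $u \le v$ and $0$ otherwise, I set $\eta := \zeta - \delta$, so that $\eta(u,v)=1$ precisely when $u < v$. Because $P$ is finite, $\eta$ is nilpotent on every interval (indeed $\eta^{k}(u,v)=0$ once $k$ exceeds the length of the longest chain in $[u,v]$), so the geometric series terminates and yields
\begin{equation*}
\mu = \zeta^{-1} = (\delta + \eta)^{-1} = \sum_{k \ge 0} (-1)^{k} \eta^{k}.
\end{equation*}

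The key combinatorial step is to interpret the powers of $\eta$. By the definition of convolution, $\eta^{k}(\hat{0},\hat{1})$ counts the strict chains $\hat{0} = x_{0} < x_{1} < \dotsb < x_{k} = \hat{1}$ of length $k$; call this number $c_{k}$. Evaluating the displayed identity at $(\hat{0},\hat{1})$ gives Philip Hall's formula
\begin{equation*}
\mu(\hat{0},\hat{1}) = \sum_{k \ge 0} (-1)^{k} c_{k}.
\end{equation*}
Next I would match each $c_{k}$ with a face count of $\Delta := \Delta(P)$: deleting the two endpoints from a length-$k$ chain produces a chain of $k-1$ elements in the open interval, i.e.\ a $(k-2)$-dimensional face of $\Delta$, and this is a bijection. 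Hence $c_{k}=f_{k-2}$, where $f_{j}$ is the number of $j$-dimensional faces of $\Delta$, with the conventions $f_{-1}=1$ (the empty face, corresponding to the single chain $\hat{0} < \hat{1}$, so that $c_{1}=1$) and $f_{j}=0$ for $j<-1$. Substituting,
\begin{equation*}
\mu(\hat{0},\hat{1}) = \sum_{k \ge 0}(-1)^{k} f_{k-2} = \sum_{j \ge -1} (-1)^{j} f_{j} = \tilde{\chi}(\Delta),
\end{equation*}
which is the claim.

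The step I expect to demand the most care is not any single computation but the bookkeeping around conventions: fixing $\Delta(P)$ as the proper-part complex, aligning the chain-length index $k$ with the face-dimension index $j=k-2$, and handling the empty-face term $f_{-1}=1$ so that the chain $\hat{0}<\hat{1}$ is counted exactly once. The only genuine structural input beyond this is the nilpotency of $\eta$, which is where finiteness of $P$ enters and which legitimizes the inversion $\mu = \sum_{k}(-1)^{k}\eta^{k}$; an alternative route would replace this inversion by induction on $P$ using the recursive definition $\mu(u,v) = -\sum_{u \le t < v}\mu(u,t)$, but the incidence-algebra argument is cleaner and makes the appearance of the alternating face sum transparent.
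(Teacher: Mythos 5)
Your proof is correct, and the comparison here is one-sided: the paper never proves this statement---it is quoted as classical background (Philip Hall's theorem, cited there to the literature on the M\"obius function, e.g.\ Rota), so the paper's ``own proof'' is simply a citation. Your incidence-algebra argument is the standard proof of that classical theorem. Two things you did deserve explicit approval. First, the convention point: you correctly insist that $\Delta(P)$ must be read as the order complex of the proper part $P \setminus \{\hat{0},\hat{1}\}$, which matches the paper's own later remark that $\Delta(P)$ is a contractible cone when $\hat{0}$ and $\hat{1}$ are included; without that reading the identity is false, so recording the convention is genuinely part of the proof. Second, the index bookkeeping ($c_{k} = f_{k-2}$, the empty face giving $f_{-1}=1$ in bijection with the chain $\hat{0}<\hat{1}$, and the sign match $(-1)^{k}=(-1)^{k-2}$) is handled correctly. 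The only step you compress is the identification of the paper's recursively defined $\mu$ with $\zeta^{-1}$: the recursion $\mu(u,u)=1$, $\mu(u,v)=-\sum_{u\le t<v}\mu(u,t)$ says exactly that $(\mu * \zeta)(u,v)=\sum_{u\le t\le v}\mu(u,t)=\delta(u,v)$, i.e.\ $\mu$ is a left inverse of $\zeta$ in the incidence algebra, and for a finite poset a one-sided inverse there is automatically two-sided. Adding that one line legitimizes applying the inversion $\mu=\sum_{k\ge 0}(-1)^{k}\eta^{k}$ to the $\mu$ as the paper defines it; with it, your argument is complete.
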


In this paper, the posets we study come from crystals. More specifically, we study the crystal graphs of crystals of highest weight representations of finite classical Cartan type. We can view these crystal graphs as posets with covering relations given by $ u \lessdot v$ whenever $ v = f_{i}(u)$ for some $i \in I$. This extends to a partial order on the crystal graph where $u < v$ whenever there is a directed path from $u$ to $v$. We color the edge of the covering relation given by $f_{i}(u) = v$ with the color $i$ giving us the structure of an edge colored poset. We call these posets \textit{crystal posets}. The crystal graph is the Hasse diagram of the crystal poset. In this way, we can talk about the combinatorics of the crystal poset. Namely, we can refer to intervals within a given crystal and compute the M\"obius function of such an interval. For more background information on posets, see \cite{StanleyEC1}.

\subsection{Discrete Morse functions}
Discrete Morse theory was introduced in \cite{Forman} by Forman as a tool to study the homotopy type and homology groups of (primarily finite) CW complexes. 

In 2000, Chari gave a combinatorial reformulation in the case of regular CW complexes in terms of acyclic matchings on their face posets in \cite{Chari}, which is what we will use in this paper. A matching on the Hasse diagram of a face poset is \textit{acyclic} if the directed graph obtained by directing matching edges upward and all other edges downward has no directed cycles. It is shown, for example in \cite{Hersh}, that whenever a face poset has an acyclic matching, there is a nonempty set of associated discrete Morse functions on the corresponding complex.

In this paper, we will apply discrete Morse theory to simplicial complexes associated to crystal posets. Let $\Delta$ be a simplicial complex. We denote a $d$-simplex $\alpha$ by $\alpha^{(d)}$. 
\begin{definition}
A \textit{discrete Morse function} on a simplicial complex $\Delta$ is a function $f: \Delta \rightarrow \rr$ such that for each $d$-dimensional simplex, $\alpha^{(d)} \in \Delta$,
\begin{enumerate}
\item $| \{\beta^{(d+1)} \supseteq \alpha | f(\beta) \leq f(\alpha) \} | \leq 1,$
\item $| \{ \gamma^{(d-1)} \subseteq \alpha | f(\gamma) \geq f(\alpha) \}| \leq 1.$
\end{enumerate}
\end{definition}

\begin{definition}
A simplex $\alpha$ is called a \textit{critical cell} if $| \{\beta^{(d+1)} \supseteq \alpha | f(\beta) \leq f(\alpha) \} | = 0$ and $| \{ \gamma^{(d-1)} \subseteq \alpha | f(\gamma) \geq f(\alpha) \}| = 0$. Equivalently, a simplex $\alpha$ is called a critical cell if it is left unmatched by the matching on the face poset.
\end{definition}

One of the reasons discrete Morse functions are useful is the following theorem.

\begin{thm}[\cite{Forman}]\label{discrete morse}
Suppose $\Delta$ is a simplicial complex with a discrete Morse function. Then $\Delta$ is homotopy equivalent to a CW-complex with exactly one cell of dimension $d$ for each critical cell of dimension $d$ with respect to this choice of discrete Morse function. 
\end{thm}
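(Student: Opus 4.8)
The plan is to mimic the classical Morse-theoretic argument, building the homotopy type of $\Delta$ by sweeping through the sublevel complexes of $f$ and recording how the topology changes as one crosses each value in the (finite) range of $f$. First I would extract the combinatorial content of the two defining inequalities. For a fixed simplex $\alpha^{(d)}$ I would verify that it cannot simultaneously possess a coface $\beta^{(d+1)} \supseteq \alpha$ with $f(\beta) \le f(\alpha)$ and a face $\gamma^{(d-1)} \subseteq \alpha$ with $f(\gamma) \ge f(\alpha)$. This dichotomy is exactly what pairs each non-critical simplex uniquely with one neighbor in the Hasse diagram of the face poset, the partner lying either one dimension up (with smaller value) or one dimension down (with larger value); the unmatched simplices are precisely the critical cells, recovering the matching reformulation already recalled in the excerpt. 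After a harmless perturbation I would also assume $f$ is injective on simplices, so that the filtration grows controllably and matched partners have comparable values.

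Next, for each real number $c$ I would introduce the sublevel subcomplex $\Delta(c)$ consisting of every simplex $\alpha$ with $f(\alpha) \le c$ together with all of its faces, and study the filtration of $\Delta$ by these subcomplexes as $c$ increases through the values of $f$. The argument then splits into two cases according to whether the simplex being adjoined is critical, and the heart of the proof is the pair of discrete analogues of the classical Morse lemmas.

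The two core lemmas I would establish are as follows. (i) If no critical simplex has value in a half-open interval $(a,b]$, then the simplices entering the filtration on that interval come in matched pairs $(\gamma,\alpha)$ with $\gamma$ a codimension-one free face of $\alpha$; adjoining such a pair to the complex built so far is an elementary expansion, the inverse of a collapse, and hence a deformation retract, so that $\Delta(a) \hookrightarrow \Delta(b)$ is a deformation retract. (ii) If exactly one critical simplex, of dimension $d$, has value in $(a,b]$, then $\Delta(b)$ is obtained from $\Delta(a)$, up to homotopy, by attaching a single $d$-cell along its boundary, the boundary already lying in $\Delta(a)$. Iterating (i) and (ii) across the finitely many values of $f$, starting from the empty complex, then assembles $\Delta$ up to homotopy equivalence as a CW complex with exactly one $d$-cell for each critical simplex of dimension $d$, which is the assertion.

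The main obstacle I anticipate lies in establishing (i): I must check that the matched pairs entering on $(a,b]$ can genuinely be adjoined as a sequence of elementary expansions in a consistent order, so that whenever a pair $(\gamma,\alpha)$ is added, $\gamma$ really is a free face in the complex assembled up to that point. This is precisely where the acyclicity hidden in the discrete Morse inequalities is needed: directing matched edges upward and unmatched edges downward must yield no directed cycle, which is what permits one to order the expansions by $f$-value and peel off free-face pairs one at a time without conflict. Once (i) is in hand, verifying the attaching step in (ii) is comparatively routine, since the boundary of the single critical cell already sits inside $\Delta(a)$ and no further cancellation can occur at that value.
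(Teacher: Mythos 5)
Your outline is correct and is essentially Forman's original argument for this theorem: filter $\Delta$ by sublevel complexes, prove the two discrete Morse lemmas (an interval with no critical values yields a collapse/deformation retract, an interval with exactly one critical cell of dimension $d$ yields the attachment of a single $d$-cell along a boundary already present), and iterate, with the acyclicity of the induced matching guaranteeing that the matched pairs can be adjoined as elementary expansions in a consistent order. Note that the paper itself offers no proof of this statement --- it is quoted from the cited reference --- so your reconstruction agrees with the standard proof in that source rather than with any argument appearing in the paper.
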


We deviate slightly from Forman's conventions in a way that is typical in combinatorics. We allow the empty set to be in the domain of our discrete Morse function $f$, as well as in the face posets on which we construct acyclic matchings. By doing so, we must express our results in terms of reduced Euler characteristic and reduced homology. When using this reduced version of discrete Morse theory, we denote the number of critical cells of dimension $i$ by $\tilde{m}_{i}$ and the reduced Betti numbers by $\tilde{b}_{i}$.

\begin{rmk}
From Theorem \ref{discrete morse}, rephrased to use reduced Betti numbers and Morse numbers, we can immediately deduce that if a discrete Morse function has exactly one critical cell of dimension $i$ and no other critical cells, then our original simplicial complex is homotopy equivalent to an $i$-dimensional sphere.
\end{rmk}

In \cite{HershBabson}, Babson and Hersh introduced lexicographic discrete Morse functions as a tool to study the topology of order complexes of partially ordered sets with $\hat{0}$ and $\hat{1}$. This is what we will use to study crystal posets. They show how each facet contributes at most one critical cell, describing these using the minimal skipped intervals discussed shortly. 

Before we describe how to construct lexicographic discrete Morse functions, we explain some of the useful properties they will have. By virtue of properties of lexicographic orders on saturated chains, the lexicographic discrete Morse functions will have relatively few critical cells. If the attachment of the facet corresponding to some saturated chain does not change the homotopy of the subcomplex of our order complex built so far, then this step does not introduce any critical cells.

Now we review lexicographic discrete Morse functions in general. This will rely on a notion of rank within a chain that does not require the poset to be graded. However, in this paper, the crystal posets we are interested in are graded by the weight function, as seen in Lemma \ref{saturated chain same labels}, simplifying the grading in a chain. 


Given a poset $P$ graded of rank $n$, let $\beta$ be an integer labeling on the edges of the Hasse diagram of $P$ such that $\beta(u,v) \neq \beta(u,w)$ whenever $v \neq w$. Each facet of $\Delta(P)$ corresponds to a saturated chain, $\hat{0} \lessdot u_{1} \lessdot \cdots \lessdot u_{k} \lessdot \hat{1}$ in $P$. For each saturated chain we read off the label sequence $(\beta(\hat{0},u_{1}), \beta(u_{1},u_{2}), \cdots , \beta(u_{k},\hat{1}))$ and order these lexicographically. This labeling gives rise to a total order on the facets $F_{1}, . . . , F_{k}$ of the order complex. 
By virtue of the fact that we attach facets in a lexicographic order, each maximal face in $\overline{F}_{j} \cap (\cup_{i<j} \overline{F}_{i})$ has rank set of the form $1,. . . , i, j, ..., n$ for $j > i+1$ i.e. it omits a single interval of consecutive ranks. We call this rank interval $[i+1,j-1]$ a \textit{minimal skipped interval} of $F_{j}$ with support $i+1, ... , j-1$ and height $j-i-1$. For a given facet $F_{j}$, we call the collection of minimal skipped intervals the \textit{interval system of $F_{j}$}. 

\begin{rmk}\label{general MSI description}
In order to determine the minimal skipped intervals for a given saturated chain $M$ corresponding to some facet $F_{j}$, we consider each cover relation $u \lessdot v$ as we travel up $M$. At each cover relation $u \lessdot v$, we check if there is a lexicographically earlier cover relation $u \lessdot v'$ upward from $u$. If so, we obtain a maximal face in $\overline{F}_{j} \cap(\cup_{i<j} \overline{F}_{i})$, and hence a minimal skipped interval, by taking the intersection of $\overline{F}_{j}$ with the closure of any facet $F_{i'}$ that includes $u \lessdot v'$, that agrees with $F_{j}$ below $u$ and agrees with $F_{j}$ above $w \in F_{j}$ for some $w > v'$ of minimal rank. 
\end{rmk}

When our poset has some natural labeling, like that of our crystal posets, it is often possible to classify its minimal skipped intervals. 

Any face in $\overline{F}_{j} \setminus (\cup_{i<j} \overline{F}_{i})$ must include at least one rank from each of the minimal skipped intervals of $F_{j}$. For each $j$, an acyclic matching on the set of faces in $\overline{F}_{j} \setminus (\cup_{i<j} \overline{F}_{i})$ is constructed in \cite{HershBabson} in terms of the interval system. The union of these matchings is acyclic on the entire Hasse diagram of the face poset of the order complex of $P$. For more about this acyclic matching, see \cite{Hersh}.

\begin{rmk}\label{proper part}
In actuality, we study the order complexes of the proper parts of our posets; if $P$ has a $\hat{0}$ and $\hat{1}$ then $\Delta(P)$ is contractible as it is a cone. We use the $\hat{0}$ and $\hat{1}$ in the lexicographic discrete Morse functions in a bookkeeping role. More specifically, $\hat{0}$ and $\hat{1}$ are needed to record the labels of covering relations upward from $\hat{0}$ and upward towards $\hat{1}$. In particular, when we refer to fully covered saturated chains, the ranks of $\hat{0}$ and $\hat{1}$ are not covered.
\end{rmk}

A facet $F_{j}$ will contribute a critical cell if and only if the interval system of $F_{j}$ covers all ranks in $F_{j}$, keeping Remark \ref{proper part} in mind, after the truncation algorithm described below. In this case we say that the corresponding saturated chain is \textit{fully covered}. The dimension of such a critical cell is one less than the number of minimal skipped intervals in the interval system after the truncation algorithm. This truncation algorithm is needed when the interval system of some facet $F_{j}$ covers all ranks but there are overlapping minimal skipped intervals.  Otherwise the truncated system equals the original system. 

For the truncation algorithm, we begin with our interval system, $I$, and initialize the truncated system, which we call $J$, to be the empty set. Then, we repeatedly move the minimum interval in $I$ to the truncated system $J$ and truncate all other elements of $I$ to eliminate any overlap with the minimum interval in $I$ being moved to $J$ at this step. Here, by minimum we mean the minimal skipped interval containing the element of smallest rank. Next, remove any intervals in $I$ that are no longer minimal. We repeat this until there are no longer any minimal skipped intervals in $I$. We call the truncated, minimal intervals obtained by this algorithm the \textit{$J$-intervals of $F_{j}$}. By construction, these are non-overlapping. If the $J$-intervals cover all ranks of $F_{j}$, then $F_{j}$ contributes a critical cell. We get this critical cell by taking the lowest rank element of each of the $J$-intervals. Otherwise $F_{j}$ does not contribute any critical cells. For a more detailed background on lexicographic discrete Morse functions, see \cite{Hersh}.

\section{Consequences of the Stembridge axioms}
In this section, we deduce consequences of the Stembridge axioms regarding relations among crystal operators in both the simply laced and doubly laced cases. These axioms give restrictions on which Stembridge/Sternberg relations can occur among two given crystal operators for crystals coming from highest weight representations in the simply and doubly laced cases. In addition, we prove that crystals of types $B_{2}$ and $C_{2}$ are not lattices due to the asymmetrical nature of the degree five Sternberg relations.

Throughout this section, we will let $A = [a_{ij}]_{i,j \in I}$ be the Cartan matrix of a symmetrizable Kac Moody algebra $\mathfrak{g}$, where $I$ is some finite index set. We first will work with simply laced Kac Moody algebras of types $A_{n}$ and $D_{n}$. The Cartan matrices for these types can be seen in Figures \ref{fig:An} and \ref{fig:Dn}.
\begin{figure}[h]
\centering
 $A = \begin{bmatrix}
    2 & -1 & 0 & 0 & \dots  & 0 & 0 \\
    -1 & 2 & -1 & 0 & \dots  & 0 & 0 \\
     0 & -1 & 2 & -1 & \dots & 0 & 0 \\
    \vdots & \vdots & \vdots & \vdots & \ddots & \vdots & \vdots \\
    0 & 0 & 0 & 0 & \dots & 2 & -1 \\
    0 & 0 & 0 & 0 & \dots  &  -1 & 2 \\
\end{bmatrix}$
  \caption{Cartan matrix of type $A_{n}$}
  \label{fig:An}
\end{figure}
\begin{figure}[h]
\centering
 $ A = \begin{bmatrix}
    2 & -1 & 0 &  \dots  & 0 & 0 & 0 & 0\\
    -1 & 2 & -1 &  \dots  & 0 & 0 & 0 & 0 \\
     0 & -1 & 2 &  \dots  & 0 & 0 & 0 & 0 \\
    \vdots & \vdots & \vdots & \ddots & \vdots & \vdots & \vdots & \vdots \\
    0 & 0 & 0 &  \dots & 2 & -1 & 0 & 0 \\
    0 & 0 & 0 &  \dots & -1 & 2 & -1 & -1 \\
    0 & 0 & 0  & \dots  & 0 & -1 & 2 & 0 \\
    0 & 0 & 0 & \dots  & 0 & -1 & 0 & 2 \\
\end{bmatrix}$
  \caption{Cartan matrix of type $D_{n}$}
  \label{fig:Dn}
\label{fig:Cartan matrices}
\end{figure}

For the Cartan matrix of type $A_{n}$, we note that $a_{i,i+1} = a_{i+1,i} = -1$ and all other off diagonal entries are zero. In particular, $a_{ij} = 0$ for all $i,j \in [n]$ such that $|i-j| > 1$. Therefore, using axioms (S3) and (S4), we must have that  for any vertex $x$ where $e_{i}(x) \neq 0$ and $|i-j| > 1$ in a crystal graph of type $A_{n}$, $\Delta_{i}\delta_{j}(x) = \Delta_{i}\varepsilon_{j}(x) = 0$. Similarly, we have $\Delta_{j}\delta_{i}(x) = \Delta_{j}\varepsilon_{i}(x) = 0$. As a result, when we have an $x$ with $f_{i}(x) \neq 0$ and $f_{j}(x) \neq 0$, we have the following statement regarding degree two and degree four Stembridge relations.

\begin{proposition}
Let $\mathcal{B}$ by a crystal of type $A_{n}$. Let $x \in \mathcal{B}$ such that $f_{i}(x) \neq 0$ and $f_{j}(x) \neq 0$. Then we have:
\begin{enumerate}
\item If $|i-j| >1$, then $f_{i}f_{j}(x) = f_{j}f_{i}(x)$.
\item If $|i-j| = 1$, then either $f_{i}f_{j}(x) = f_{j}f_{i}(x)$ or $f_{i}f_{j}^{2}f_{i}(x) = f_{j}f_{i}^{2}f_{j}(x)$.
\end{enumerate}
The statement remains true if we replace $f_{i}$ and $f_{j}$ with the crystal operators $e_{i}$ and $e_{j}$.
\end{proposition}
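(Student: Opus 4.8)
The plan is to read the possible values of the Stembridge difference operators directly off the Cartan integers $a_{ij}$ of type $A_n$ and then feed those values into axioms (S5') and (S6'). Since the conclusion is stated with the downward operators $f_i, f_j$, the relevant governing quantities are $\nabla_i\varepsilon_j(x)$ and $\nabla_j\varepsilon_i(x)$, whereas axioms (S3) and (S4) are phrased in terms of the upward quantities $\Delta_i\delta_j$ and $\Delta_i\varepsilon_j$. So the first thing I would record is the elementary shift identity bridging the two: setting $y = f_i(x)$, axiom (S2) gives $e_i(y) = x$, and then directly from the definitions $\nabla_i\varepsilon_j(x) = \varepsilon_j(x) - \varepsilon_j(f_i(x)) = \varepsilon_j(e_i(y)) - \varepsilon_j(y) = \Delta_i\varepsilon_j(y)$, and likewise $\nabla_i\delta_j(x) = \Delta_i\delta_j(y)$. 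Because $f_i(x) \neq 0$ by hypothesis, $y$ is a genuine vertex at which $e_i$ is defined, so the Stembridge axioms may legitimately be invoked at $y$.

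Applying (S3) and (S4) at $y$ gives $\Delta_i\delta_j(y) + \Delta_i\varepsilon_j(y) = a_{ij}$ with both summands $\le 0$, and together with the shift identity this confines $\nabla_i\varepsilon_j(x)$ to an integer in $\{a_{ij}, a_{ij}+1, \dots, 0\}$. For part (1), $|i-j| > 1$ forces $a_{ij} = 0$, so $\nabla_i\varepsilon_j(x) = 0$ and (S5') yields $f_if_j(x) = f_jf_i(x)$ at once. For part (2), $|i-j| = 1$ gives $a_{ij} = a_{ji} = -1$, so each of $\nabla_i\varepsilon_j(x)$ and $\nabla_j\varepsilon_i(x)$ lies in $\{0,-1\}$, the latter obtained by rerunning the same argument at $f_j(x)$.

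I would finish part (2) by a short case analysis on the pair $(\nabla_i\varepsilon_j(x), \nabla_j\varepsilon_i(x))$. If either coordinate is $0$, then (S5'), read with the appropriate ordering of $i$ and $j$, produces the degree two relation $f_if_j(x) = f_jf_i(x)$; the only remaining possibility is $\nabla_i\varepsilon_j(x) = \nabla_j\varepsilon_i(x) = -1$, in which case (S6') produces the degree four relation $f_if_j^2f_i(x) = f_jf_i^2f_j(x)$. Since these four cases $(0,0), (0,-1), (-1,0), (-1,-1)$ exhaust all possibilities, the stated dichotomy follows. The $e_i, e_j$ version is even more direct under the dual hypothesis that $e_i(x)$ and $e_j(x)$ are defined: axioms (S3) and (S4) applied at $x$ itself control $\Delta_i\delta_j(x)$ and $\Delta_j\delta_i(x)$, and (S5), (S6) are already stated in terms of exactly those $\Delta$-quantities, so no shift identity is required.

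I do not expect a genuine obstacle here, as the argument is a direct substitution of the type $A_n$ Cartan integers into the Stembridge inequalities; the only real subtlety is bookkeeping. One must be careful to apply (S3) and (S4) at the vertex $f_i(x)$ rather than at $x$, so that $e_i$ is defined where the axioms demand it, and in the asymmetric branch of part (2) one must observe that $\nabla_i\varepsilon_j(x) = -1$ together with $\nabla_j\varepsilon_i(x) = 0$ still lands in the degree two case via (S5') read from the $j$-side rather than the $i$-side.
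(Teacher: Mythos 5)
Your proof is correct and takes essentially the same route as the paper: both read off the type $A_n$ Cartan integers, use axioms (S3) and (S4) to confine the difference operators to $\{a_{ij},\dots,0\}$, and then invoke (S5$'$)/(S6$'$) (resp.\ (S5)/(S6) for the $e_i,e_j$ version) to obtain the dichotomy. The only difference is presentational: the paper leaves the passage from the $\Delta$-quantities at vertices where $e_i$ is defined to the $\nabla$-quantities at $x$ implicit, whereas you spell out the shift identity $\nabla_i\varepsilon_j(x)=\Delta_i\varepsilon_j(f_i(x))$ and the resulting case analysis explicitly.
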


Now we consider the Cartan matrix for type $D_{n}$ from Figure \ref{fig:Dn}. Here we have that $a_{i,i+1} = a_{i+1,i} = -1$ for all $1 \leq i \leq n-2$. In addition, we also have $a_{n-2,n} = a_{n,n-2} = -1$. All other off diagonal entries are equal to zero. In particular, $a_{n-1,n} = a_{n,n-1} = 0$, which differs from the Cartan matrix of type $A_{n}$. Hence, using axioms (S3) and (S4) again, we get the following result. 

\begin{proposition}
Let $\mathcal{B}$ be a crystal of type $D_{n}$. Let $x \in \mathcal{B}$ such that $f_{i}(x) \neq 0$ and $f_{j}(x) \neq 0$. Then we have:
\begin{enumerate}
\item If $|i-j| > 1$ and $\{i,j\} \neq \{n-2,n\}$, then $f_{i}f_{j}(x) = f_{j}f_{i}(x)$.
\item If $\{i,j\} = \{n-1,n\}$, then $f_{i}f_{j}(x) = f_{j}f_{i}(x)$.
\item If $|i-j| = 1$ and $\{i,j\} \neq \{n-1,n\}$, then either $f_{i}f_{j}(x) = f_{j}f_{i}(x)$ or $f_{i}f_{j}^{2}f_{i}(x) = f_{j}f_{i}^{2}f_{j}(x)$.
\item If $\{i,j\} = \{n-2,n\}$, then either $f_{i}f_{j}(x) = f_{j}f_{i}(x)$ or $f_{i}f_{j}^{2}f_{i}(x) = f_{j}f_{i}^{2}f_{j}(x)$.
\end{enumerate}
The statement remains true if we replace $f_{i}$ and $f_{j}$ with the crystal operators $e_{i}$ and $e_{j}$.
\end{proposition}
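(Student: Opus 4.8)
The plan is to mirror the type $A_n$ argument, the only genuinely new input being the Cartan entries $a_{ij}$ of type $D_n$ read off from Figure \ref{fig:Dn}. The axioms (S3) and (S4) are phrased in terms of the difference operators $\Delta_i$ at a vertex where $e_i$ is defined, so first I would set up a bridge to the $\nabla$-operators: since $f_i(x) \neq 0$, writing $y = f_i(x)$ gives $e_i(y) = x$, and unwinding the definitions yields $\nabla_i \varepsilon_j(x) = \Delta_i \varepsilon_j(y)$ and $\nabla_i \delta_j(x) = \Delta_i \delta_j(y)$. Applying (S3) and (S4) at $y$ (legitimate because $e_i(y) = x \neq 0$) then gives $\nabla_i \varepsilon_j(x) + \nabla_i \delta_j(x) = a_{ij}$ with both summands nonpositive integers. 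The analogous identity with $i$ and $j$ interchanged holds as well, using $f_j(x) \neq 0$ and the symmetry $a_{ji} = a_{ij}$ of the simply laced Cartan matrix.

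For the commuting cases (1) and (2), Figure \ref{fig:Dn} shows $a_{ij} = 0$: this is clear when $|i-j| > 1$ with $\{i,j\} \neq \{n-2,n\}$, and the essential point separating type $D_n$ from type $A_n$ is that $a_{n-1,n} = a_{n,n-1} = 0$ as well. In both situations the identity above reads as a sum of two nonpositive integers equal to $0$, forcing $\nabla_i \varepsilon_j(x) = 0$, whereupon (S5') yields $f_i f_j(x) = f_j f_i(x)$.

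For cases (3) and (4), I would use $a_{ij} = -1$, which Figure \ref{fig:Dn} gives for every consecutive pair $\{i, i+1\}$ with $1 \leq i \leq n-2$ (case (3)) and for the forked pair $\{n-2, n\}$ (case (4)). Then $\nabla_i \varepsilon_j(x) + \nabla_i \delta_j(x) = -1$ with both terms nonpositive integers forces $\nabla_i \varepsilon_j(x) \in \{0, -1\}$, and likewise $\nabla_j \varepsilon_i(x) \in \{0, -1\}$. Now I would split into cases: if either $\nabla_i \varepsilon_j(x) = 0$ or $\nabla_j \varepsilon_i(x) = 0$, then (S5') (with the indices ordered appropriately) produces the degree two relation $f_i f_j(x) = f_j f_i(x)$; otherwise $\nabla_i \varepsilon_j(x) = \nabla_j \varepsilon_i(x) = -1$ and (S6') produces the degree four relation $f_i f_j^2 f_i(x) = f_j f_i^2 f_j(x)$. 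Hence in each case one of the two asserted relations holds.

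The main obstacle is slight and amounts to careful bookkeeping with the two exceptional pairs at the fork of the $D_n$ diagram: one must recognize that $\{n-1,n\}$ is a commuting pair (entry $0$) while $\{n-2,n\}$ behaves like an ordinary adjacent pair (entry $-1$). The only step that is not a verbatim translation of the commuting case is the case split in (3) and (4), where I must track both $\nabla_i \varepsilon_j(x)$ and $\nabla_j \varepsilon_i(x)$, since (S6') requires both to equal $-1$ and the mixed possibilities must be absorbed by (S5'). Finally, the dual statement for $e_i, e_j$ is even more direct: for $x$ with $e_i(x), e_j(x) \neq 0$ one applies (S3) and (S4) at $x$ itself to control $\Delta_i \delta_j(x)$, and (S5), (S6) take over the roles of (S5'), (S6').
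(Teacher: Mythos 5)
Your proposal is correct and follows essentially the same route as the paper: read off the type $D_n$ Cartan entries (in particular $a_{n-1,n}=0$ and $a_{n-2,n}=-1$), use (S3) and (S4) to force the two difference operators to be nonpositive integers summing to $a_{ij}$, and then invoke (S5$'$) when one of them vanishes and (S6$'$) when both equal $-1$. The only difference is that you make explicit the translation $\nabla_i\varepsilon_j(x) = \Delta_i\varepsilon_j(f_i(x))$ needed to apply the $\Delta$-phrased axioms (S3), (S4) in the $f$-setting, a step the paper leaves implicit.
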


We now move on to consider the doubly laced case. 
We recall the Cartan matrices of type $B_{n}$ and type $C_{n}$ in Figures \ref{fig:sub1} and \ref{fig:sub2}. 
\begin{figure}[h]
\centering
 $A = \begin{bmatrix}
    2 & -1 & 0 & 0 & \dots  & 0 & 0 \\
    -1 & 2 & -1 & 0 & \dots  & 0 & 0 \\
     0 & -1 & 2 & -1 & \dots & 0 & 0 \\
    \vdots & \vdots & \vdots & \vdots & \ddots & \vdots & \vdots \\
    0 & 0 & 0 & 0 & \dots & 2 & -2 \\
    0 & 0 & 0 & 0 & \dots  &  -1 & 2 \\
\end{bmatrix}$
  \caption{Cartan matrix of type $B_{n}$}
  \label{fig:sub1}
\end{figure}
\begin{figure}[h]
  \centering
 $ A' = \begin{bmatrix}
    2 & -1 & 0 & 0 & \dots  & 0 & 0 \\
    -1 & 2 & -1 & 0 & \dots  & 0 & 0 \\
     0 & -1 & 2 & -1 & \dots & 0 & 0 \\
    \vdots & \vdots & \vdots & \vdots & \ddots & \vdots & \vdots \\
    0 & 0 & 0 & 0 & \dots & 2 & -1 \\
    0 & 0 & 0 & 0 & \dots  &  -2 & 2 \\
\end{bmatrix}$
  \caption{Cartan matrix of type $C_{n}$}
  \label{fig:sub2}
\end{figure}
\label{fig:Cartan matrices}

For the Cartan matrix $A$ of type $B_{n}$, note that $a_{n-1,n} = -2$ and for the Cartan matrix $A$ of type $C_{n}$, we have $a_{n,n-1} = -2$. All the remaining superdiagonal entries $a_{i,i+1}$ and remaining subdiagonal entries $a_{i+1,i}$ in $A$ of type $B_{n}$ and $C_{n}$ are equal to $-1$. The remaining off diagonal entries in each Cartan matrix are all zero. Therefore, since crystal graphs of type $B_{n}$ and $C_{n}$ are $A-$regular, for $\{i,j\} \neq \{n-1,n\}$, by axioms (S3) and (S4) we have that for any given vertex $x$ there are only three possibilities for the triples $(a_{ij},\Delta_{i}\delta_{j}(x), \Delta_{i}\varepsilon_{j}(x))$, namely $(0,0,0), (-1,-1,0)$ or $(-1,0,-1)$. Hence, by axioms (S5)-(S6) and (S5')-(S6'), we have the following result. 

\begin{thm}\label{possible Sternberg} Let $\mathcal{B}$ be a crystal of type $B_{n}$ or $C_{n}$. Let $x \in \mathcal{B}$ such that $f_{i}(x) \neq 0$ and $f_{j}(x) \neq 0$. Then we have:
\begin{enumerate}
\item If $|i-j| > 1$, then $f_{i}f_{j}(x) = f_{j}f_{i}(x)$.
\item If $|i-j| = 1$ and $\{i,j\} \neq \{n-1,n\}$, then either $f_{i}f_{j}(x) = f_{j}f_{i}(x)$ or $f_{i}f_{j}^{2}f_{i}(x) = f_{j}f_{i}^{2}f_{j}(x)$.
\item If $\{i,j\} = \{n-1,n\}$, then any of the Stembridge or Sternberg relations are possible.
\end{enumerate}
\end{thm}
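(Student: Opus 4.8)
The plan is to reduce all three cases to the Stembridge axioms (S3), (S4), (S5${}'$) and (S6${}'$) together with the Cartan entries read off from Figures \ref{fig:sub1} and \ref{fig:sub2}, exactly as the triple computation preceding the statement was arranged to permit. The one preliminary observation needed throughout is a translation between the downward difference operators $\Delta$ and the upward ones $\nabla$. Since $f_i(x)$ is defined, the vertex $y = f_i(x)$ satisfies $e_i(y) = x$, and unwinding the definitions gives $\nabla_i\varepsilon_j(x) = \varepsilon_j(x) - \varepsilon_j(f_i(x)) = \varepsilon_j(e_i(y)) - \varepsilon_j(y) = \Delta_i\varepsilon_j(y)$, so every bound on $\Delta_i\varepsilon_j$ forced by (S3) and (S4) transfers directly to a bound on $\nabla_i\varepsilon_j(x)$, and symmetrically for $\nabla_j\varepsilon_i(x)$ via $z = f_j(x)$. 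For part (1), when $|i-j| > 1$ the relevant Cartan entry is $a_{ij} = 0$, so the only admissible triple at $y$ is $(0,0,0)$; in particular $\Delta_i\varepsilon_j(y) = 0$, whence $\nabla_i\varepsilon_j(x) = 0$ and (S5${}'$) forces $f_if_j(x) = f_jf_i(x)$.

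For part (2), when $|i-j| = 1$ and $\{i,j\} \neq \{n-1,n\}$ we have $a_{ij} = a_{ji} = -1$. Applying (S3) and (S4) at $y = f_i(x)$ gives $\Delta_i\delta_j(y) + \Delta_i\varepsilon_j(y) = -1$ with both summands at most $0$, so $\nabla_i\varepsilon_j(x) = \Delta_i\varepsilon_j(y) \in \{0,-1\}$; the identical argument at $z = f_j(x)$ gives $\nabla_j\varepsilon_i(x) \in \{0,-1\}$. I then split on these two values. If at least one of them vanishes, say $\nabla_i\varepsilon_j(x) = 0$, then (S5${}'$) yields the commutation $f_if_j(x) = f_jf_i(x)$, and the statement of (S5${}'$) with $i$ and $j$ interchanged handles the case $\nabla_j\varepsilon_i(x) = 0$. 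The only remaining possibility is $\nabla_i\varepsilon_j(x) = \nabla_j\varepsilon_i(x) = -1$, which is precisely the hypothesis of (S6${}'$) and hence produces the degree four relation $f_if_j^2f_i(x) = f_jf_i^2f_j(x)$. This exhausts all cases, so only the degree two and degree four relations occur; I emphasize that this argument bypasses Sternberg's theorem entirely and in particular excludes the degree five and degree seven relations automatically.

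For part (3), where $\{i,j\} = \{n-1,n\}$, the bond is genuinely doubly laced and one of $a_{ij}, a_{ji}$ equals $-2$. On that side the bound of the previous paragraph degrades to $\nabla_i\varepsilon_j(x) \in \{0,-1,-2\}$, so the hypotheses of (S5${}'$) and (S6${}'$) need no longer be met, and the dichotomy forcing only degree two or degree four collapses. Here I simply invoke Sternberg's theorem, which asserts that exactly one of the four listed relations holds; since the refinement used in parts (1) and (2) no longer excludes any case, all four are admissible, and I would confirm that each actually occurs by exhibiting vertices in the rank two crystals of types $B_2$ and $C_2$ that realize the degree two, four, five, and seven relations respectively.

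The main obstacle is conceptual rather than computational: one must observe that (S5${}'$) requires only a \emph{single} one of the two quantities $\nabla_i\varepsilon_j(x)$, $\nabla_j\varepsilon_i(x)$ to vanish, so that the a priori asymmetric configuration $(\nabla_i\varepsilon_j(x), \nabla_j\varepsilon_i(x)) = (-1,0)$ still yields commutation and does not demand a separate argument. Once this is noticed the four-way case analysis of part (2) closes cleanly, and the only genuinely external input is Sternberg's classification, which is needed solely to enumerate the possibilities in part (3).
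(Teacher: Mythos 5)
Your proposal is correct and takes essentially the same route as the paper: read off the Cartan entries, use (S3)/(S4) to pin the difference-operator triples to $(0,0,0)$, $(-1,-1,0)$, $(-1,0,-1)$ when $\{i,j\}\neq\{n-1,n\}$, then conclude via (S5${}'$)/(S6${}'$), with Sternberg's classification covering the $\{i,j\}=\{n-1,n\}$ case. Your explicit $\nabla_i\varepsilon_j(x)=\Delta_i\varepsilon_j(f_i(x))$ translation and the four-way case split simply make precise the steps the paper compresses into ``Hence, by axioms (S5)--(S6) and (S5${}'$)--(S6${}'$), we have the following result.''
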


Therefore, for crystal graphs of doubly laced type, the degree four Stembridge relations can only occur among crystal operators $f_{i}$ and $f_{i+1}$ as in type $A_{n}$, and the degree five and degree seven Sternberg relations can only occur among the crystal operators $f_{n-1}$ and $f_{n}$.

Crystals of rank two algebras are often of particular interest. This is due to the result seen in \cite{KMN} which says that a crystal graph with a unique maximal vertex is the crystal graph of some representation if and only if it decomposes as the disjoint union of crystals of representations relative to the rank two subalgebras corresponding to each pair of edge colors. Therefore, we now consider crystals of type $B_{2}$ and $C_{2}$. In \cite{DKK}, it is shown that crystals of type $A_{2}$ are lattices. We show that this result does not carry over to the doubly laced case. 
\begin{thm}
Crystals of highest weight representations of types $B_{2}$ and $C_{2}$ are not lattices.
\end{thm}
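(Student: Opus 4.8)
The plan is to show that the crystal violates the defining property of a lattice by producing a pair of vertices with two distinct minimal common upper bounds. A finite poset with a $\hat 0$ and $\hat 1$ is a lattice if and only if every pair of elements has a join (a least upper bound); equivalently, it fails to be a lattice as soon as some pair $\{u,v\}$ admits two incomparable minimal common upper bounds. A crystal of finite type has a unique highest weight vertex (a $\hat 0$) and a unique lowest weight vertex (a $\hat 1$), so it suffices to exhibit, in a type $C_2$ crystal and in a type $B_2$ crystal, a single pair of vertices whose set of common upper bounds has no minimum.

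The natural place to search is at a degree five Sternberg relation $f_i f_j^3 f_i(x) = f_j^2 f_i^2 f_j(x)$. In contrast to the degree two and degree four Stembridge relations, which are symmetric under interchanging $i$ and $j$ and whose local diamonds assemble consistently (this is compatible with the Danilov--Karzanov--Koshevoy theorem \cite{DKK} that type $A_2$ crystals are lattices), the degree five relation is genuinely asymmetric in $i$ and $j$: along the right branch of Figure \ref{sternbergrelations}(a) the operator $f_j$ is applied three consecutive times, whereas the left branch immediately opens into a degree two diamond. The plan is to exploit exactly this asymmetry: two saturated chains emanating from a vertex lying just below $x$ ascend through the two different branches, and their earliest common upper bounds fall on different branches and at different ranks, so they cannot reconverge to a single minimal upper bound.

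Concretely, for type $C_2$ I would take the $20$-element crystal $\mathcal B(3\varpi_1)=\mathrm{Sym}^3(\mathbb C^4)$. Using the signature rule, one locates a vertex $w = 112$ carrying a degree four Stembridge relation (with top $1\bar2\bar1$), and whose image $f_i f_j(w) = 12\bar2 =: x$ is the bottom of a degree five Sternberg relation with $i=2$, $j=1$ and top $\bar2\bar1\bar1$. I would then take the pair $u = f_j f_i(w) = 11\bar1$ and $v = x = 12\bar2$ and check that its common upper bounds have two minimal elements: $1\bar2\bar1$ (which is the top of the degree four relation and equals the node $c$ on the straight $f_j^3$ branch of the degree five relation, at rank $5$) and $2\bar2\bar1$ (a node on the branching left side of the degree five relation, at rank $6$). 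These two are incomparable, so $u\vee v$ does not exist and $\mathcal B(3\varpi_1)$ is not a lattice. Since $B_2 \cong C_2$ under interchanging the two Dynkin nodes, transporting this example (or rerunning the identical computation with the type $B_2$ signature rule) settles the $B_2$ case.

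The main obstacle is the verification that $1\bar2\bar1$ and $2\bar2\bar1$ are \emph{both} minimal among the common upper bounds of $u$ and $v$ and are incomparable, i.e.\ that no common upper bound is sandwiched strictly below either one. This cannot be read off from the degree five relation in isolation: as an abstract edge colored poset, the configuration of Figure \ref{sternbergrelations}(a) is itself a (planar) lattice, so the failure is a \emph{global} feature of how the relation embeds in the crystal. This is precisely where the asymmetry does the work, and I would verify it by tracing the up-sets of $u$ and $v$ with the signature rule, using Theorem \ref{possible Sternberg} to control which relations can occur between $f_1$ and $f_2$: the right branch reaches $1\bar2\bar1$ one rank before the left branch reaches $2\bar2\bar1$, and the two up-sets do not meet again below the top $\bar2\bar1\bar1$ of the relation, so neither candidate lies below the other.
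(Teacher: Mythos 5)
Your proposal is correct, and your example data checks out: in the type $C_{2}$ crystal $\mathcal{B}(3\varpi_{1})$ there is indeed a degree four Stembridge relation above $w = 112$ with top $1\overline{2}\,\overline{1}$, a degree five Sternberg relation above $x = f_{2}f_{1}(w) = 12\overline{2}$ with top $\overline{2}\,\overline{1}\,\overline{1}$, and the pair $u = 11\overline{1}$, $v = 12\overline{2}$ has exactly two minimal common upper bounds, $1\overline{2}\,\overline{1}$ and $2\overline{2}\,\overline{1}$, which are incomparable; transporting along the Dynkin diagram symmetry settles $B_{2}$. However, your route is genuinely different from the paper's. The paper computes in no particular crystal: it observes that if a degree five relation runs from $x$ up to $y$, then $e_{1}(y) \neq 0 \neq e_{2}(y)$, so the dual form of Sternberg's theorem forces some Stembridge or Sternberg relation downward from $y$, and the bottom of that forced relation together with $x$ gives two distinct, incomparable maximal common lower bounds of $e_{1}(y)$ and $e_{2}(y)$ --- a \emph{meet} failure at the top of the relation, rather than your \emph{join} failure below it. (In your crystal that forced relation is a degree four relation with bottom $12\overline{1}$, and the two maximal lower bounds of the pair $(2\overline{1}\,\overline{1}, \overline{2}\,\overline{2}\,\overline{1})$ are $12\overline{2}$ and $12\overline{1}$.) The paper's argument therefore applies at once to every $B_{2}$ or $C_{2}$ highest weight crystal containing a degree five relation, and your observation that the configuration of Figure \ref{sternbergrelations}(a) is, in isolation, a lattice correctly pinpoints why some global input is indispensable: the forced downward relation is exactly the paper's substitute for your explicit 20-vertex computation. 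What your approach buys in exchange is concreteness: it exhibits an actual degree five relation in an actual crystal, something the paper's proof quietly presupposes (and must presuppose, since small crystals such as $\mathcal{B}(\varpi_{1})$, a four-element chain, are lattices). The only soft spot is that your key verification is deferred rather than performed, but it is a finite check in a 20-element poset, and the minimal upper bounds you name are the right ones.
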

\begin{proof}
This follows from the asymmetrical nature of the degree five Sternberg relations. Let $\mathcal{B}$ be the crystal of a highest weight representation of type $B_{2}$ or $C_{2}$. Let $x \in \mathcal{B}$ such that there is a degree five Sternberg relation upward from $x$. Then we have $y \in \mathcal{B}$ such that 
\begin{equation}
y = f_{1}f_{2}^{3}f_{1}(x) = f_{2}f_{1}f_{2}f_{1}f_{2}(x) = f_{2}^{2}f_{1}^{2}f_{2}(x),
\end{equation}
or
\begin{equation}
y = f_{2}f_{1}^{3}f_{2}(x) = f_{1}f_{2}f_{1}f_{2}f_{1}(x) = f_{1}^{2}f_{2}^{2}f_{1}(x).
\end{equation}
In either case, we have that $e_{1}(y) \neq 0$ and $e_{2}(y) \neq 0$. As a result, there must be a Stembridge or Sternberg relation downward from $y$. Hence, $e_{1}(y)$ and $e_{2}(y)$ will have two distinct, incomparable greatest lower bounds, one coming from the Stembridge or Sternberg relation downward from $y$ and the other being $x$. 

Similarly, if there exists $y \in \mathcal{B}$ such that there is a degree five Sternberg relation downward from $y$, then there will exist two vertices that have two distinct, incomparable least upper bounds. Hence, highest weight representations of types $B_{2}$ and $C_{2}$ are not lattices.
\end{proof}


\section{Connection between the M\"obius function of a poset and relations among crystal operators}
In this section, we prove that whenever there is an interval in a crystal poset coming from a highest weight representation of finite classical Cartan type where the M\"obius function is not equal to $-1, 0, $ or $1$, there must exist a relation among crystal operators within that interval that is not implied by Stembridge or Sternberg relations. Hersh and Lenart have previously proven this in \cite{HershLenart} in the case where the crystal is that of a highest weight representation of finite simply laced type. We extend this result to crystals of type $B_{n}$ and $C_{n}$ and in doing so, give a new proof for crystals of types $A_{n}$ and $D_{n}$. To do so, we first develop properties of crystal graphs we will need.

\begin{lemma}\label{saturated chain same labels}
Let $\mathcal{B}$ be the crystal graph of a crystal of type $\Phi$ given by a highest weight representation of finite classical Cartan type. Let $u, v \in \mathcal{B}$ such that $u < v$. Any saturated chain from $u$ to $v$ uses the same multiset of edge labels. Moreover, we can determine this multiset given wt$(u)$ and wt$(v)$.
\end{lemma}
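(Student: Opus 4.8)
The plan is to track the weight map along a saturated chain and then use the linear independence of the simple roots in finite type to pin down the label multiset.

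First I would fix an arbitrary saturated chain $u = u_{0} \lessdot u_{1} \lessdot \cdots \lessdot u_{k} = v$ in the crystal poset. By the definition of the covering relations, each step satisfies $u_{t} = f_{i_{t}}(u_{t-1})$ for a unique color $i_{t} \in I$, so the edge label sequence of the chain is $(i_{1}, \ldots, i_{k})$. Axiom (A1) gives $\text{wt}(u_{t-1}) = \text{wt}(u_{t}) + \alpha_{i_{t}}$, i.e.\ applying $f_{i_{t}}$ lowers the weight by $\alpha_{i_{t}}$. Telescoping along the chain yields
$$\text{wt}(v) = \text{wt}(u) - \sum_{t=1}^{k} \alpha_{i_{t}},$$
so, writing $m_{i}$ for the number of indices $t$ with $i_{t} = i$ (the multiplicity of the label $i$ in the edge-label multiset of the chain), we obtain
$$\text{wt}(u) - \text{wt}(v) = \sum_{i \in I} m_{i}\,\alpha_{i}.$$

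Next I would observe that the left-hand side depends only on the endpoints $u$ and $v$ and not on the chosen chain. The key step is to invoke the standard fact that for a root system of finite type the simple roots $\{\alpha_{i}\}_{i \in I}$ are linearly independent (over $\rr$) in the real span of the weight lattice; equivalently the Cartan matrices of types $A_{n}, B_{n}, C_{n}, D_{n}$ are nonsingular, so the $\alpha_{i}$ form a basis of their span. Consequently the expansion of the fixed vector $\text{wt}(u) - \text{wt}(v)$ in the simple roots is unique, which forces the multiplicities $(m_{i})_{i \in I}$ to be determined by $\text{wt}(u)$ and $\text{wt}(v)$ alone. This establishes both assertions at once: any two saturated chains from $u$ to $v$ carry the same multiset of edge labels, and that multiset is read off by expressing $\text{wt}(u) - \text{wt}(v)$ in the basis of simple roots.

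I do not expect a real obstacle here; the only point needing care is the appeal to linear independence of the simple roots, which is precisely where the finite-type hypothesis is used. As a byproduct the argument also shows the crystal poset is graded: every saturated chain from $u$ to $v$ has length $k = \sum_{i \in I} m_{i}$, so the rank of the interval $[u,v]$ is well defined, as is used elsewhere in the paper.
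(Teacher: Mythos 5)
Your proposal is correct and follows essentially the same argument as the paper: telescoping the weight map along a saturated chain via axiom (A1) and then invoking the linear independence of the simple roots to force the multiplicities $m_{i}$ to be determined by $\mathrm{wt}(u) - \mathrm{wt}(v)$. The only cosmetic difference is that the paper phrases the linear independence step as a proof by contradiction between two chains, while you argue directly via uniqueness of the expansion in the basis of simple roots; your closing observation about gradedness likewise matches the remark the paper records immediately after the lemma.
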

\begin{proof}
Recall that if $y = f_{i}(x)$ then wt($y) = $ wt($x) - \alpha_{i}$ where $\alpha_{i}$ is the $i^{th}$ simple root of our root system $\Phi$. Since $u < v$, there exists some sequence of crystal operators $f_{i_{1}}, f_{i_{2}}, ... , f_{i_{k}}$ such that $v = f_{i_{k}}\cdots f_{i_{2}}f_{i_{1}}(u)$. Then we have, \[ \text{wt}(v) = \text{wt}(u) - \sum_{j=1}^{k} \alpha_{i_{j}}. \]
Suppose by way of contradiction there exists another distinct sequence of crystal operators $f_{l_{1}}, f_{l_{2}}, ... , f_{l_{m}}$ such that $v = f_{l_{m}}\cdots f_{l_{2}}f_{l_{1}}(u)$. Then we have \[ \text{wt}(u) - \text{wt}(v) = \sum_{j=1}^{k} \alpha_{i_{j}} = \sum_{n=1}^{m} \alpha_{l_{n}}.\] Since the set of simple roots $\{ \alpha_{i} \}_{i \in I}$ is linearly independent, we must have that $\{ \alpha_{i_{1}}, ... , \alpha_{i_{k}}\} = \{ \alpha_{l_{1}} , ... , \alpha_{l_{m}} \}$. Therefore, the same crystal operators are used with the same multiplicities along any saturated chain from $u$ to $v$. In addition, by writing the vector wt$(u) - $ wt$(v)$ in terms of the simple roots, we can see exactly how many times each crystal operator $f_{i}$ is applied along any saturated chain from $u$ to $v$. 
\end{proof}

\begin{rmk}
This tells us that crystal posets are graded since every saturated chain in a given interval $[u,v]$ will have the same length. 
\end{rmk}



We now work towards showing that for $[u,v] \subseteq \mathcal{B}$ of simply laced (respectively, doubly laced) type with the property that all relations among crystal operators are implied by Stembridge (respectively, Stembridge or Sternberg) relations, we must have that $\mu(u,v) \in \{-1,0,1\}$. We do so by constructing a lexicographic discrete Morse function on $\Delta(u,v)$ that has at most one critical cell. Recall that we have a critical cell if and only if we have a fully covered saturated chain. Therefore, to prove the result, we give a method to find the unique fully covered saturated chain in the given interval $[u,v]$ when such a chain exists.

We use the natural edge coloring on crystal posets as labels to construct the lexicographic discrete Morse function i.e. the edge from $x$ to $y$ is labeled $i$ if $y = f_{i}(x)$. Throughout this section, we assume that for any interval $[u,v]$, all relations among crystal operators are implied by Stembridge relations, in the simply laced case, and Stembridge and Sternberg relations, in the doubly laced case.

\begin{definition}
Let $\mathcal{B}$ be the crystal of a highest weight representation of finite classical Cartan type. Let $[u,v] \subseteq \mathcal{B}$. If $\mathcal{B}$ is of simply laced type and all relations among crystal operators within $[u,v]$ are implied by Stembridge relations, then we say that $[u,v]$ is a \textit{Stembridge only interval}. If $\mathcal{B}$ is of doubly laced type and all relations among crystal operators are implied by Stembridge or Sternberg relations, then we say that $[u,v]$ is a \textit{Stembridge and Sternberg only interval}.
\end{definition}

As we are assuming all intervals in this section are either Stembridge only or Stembridge and Sternberg only intervals, we have that each minimal skipped interval (as described in Remark \ref{general MSI description}) in our lexicographic discrete Morse function will arise from a Stembridge or Sternberg relation. Hence, all minimal skipped intervals will be of the forms seen in Figure \ref{Stem type A} and Figure \ref{Sternberg relations}.

In the case where $\mathcal{B}$ is the crystal of a highest weight representation of type $A_{n}$ or $D_{n}$, all minimal skipped intervals are of the form seen in Figure \ref{Stem type A}.
Assume $i < j $. Note we include the special degree four Stembridge relation that can occur among the crystal operators $f_{n-2}$ and $f_{n}$ in type $D_{n}$ in the third picture. The saturated chain in red, namely the chain $x \lessdot u_{0} \lessdot y$ in the first picture and $x \lessdot u_{0} \lessdot u_{1} \lessdot u_{2} \lessdot y$ in the second and third, represents the piece of the Stembridge relation that may be on a fully covered saturated chain as it is the lexicographically second chain. The lexicographically earlier chain with vertices labeled by $v_{i}$ will lead to a minimal skipped interval covering the single rank corresponding to the vertex $u_{0}$ in the first picture, and the ranks corresponding to the vertices $u_{0}, u_{1},$ and $u_{2}$ in the second and third pictures.
\begin{figure}[h]
\centering
\begin{tikzpicture}
[auto,
vertex/.style={circle,draw=black!100,fill=black!,thick,inner sep=1pt,minimum size=.01mm}]
    \node (1) at (0,0)  [vertex, label=below:\tiny{$x$}] {};
    \node (2) at (-.5,.75) [vertex, label=left:\tiny{$v_{0}$}] {};
    \node (3) at (.5,.75) [vertex, label=right:\tiny{$u_{0}$}] {};
    \node (4) at (0,1.5) [vertex, label=above:\tiny{$y$}] {};
    \node (0) at (0,-1) {(i)};
    
    \path [->] (1) edge node[left] {\tiny{$i$}} (2);
    \path [draw=red, ->](1) edge node[right] {\tiny{$j$}} (3);
    \path [->](2) edge node[left] {\tiny{$j$}} (4);
    \path [draw=red, ->](3) edge node[right] {\tiny{$i$}} (4);
    
\end{tikzpicture}
\qquad
\begin{tikzpicture}
[auto,
vertex/.style={circle,draw=black!100,fill=black!,thick,inner sep=0pt,minimum size=1mm}]
    \node (1) at (0,0) [vertex, label=below:\tiny{$x$}] {};
    \node (2) at (-.5,.75) [vertex, label=left:\tiny{$v_{0}$}] {};
    \node (3) at (.5,.75) [vertex, label=right:\tiny{$u_{0}$}] {};
    \node (4) at (-.5,1.5) [vertex, label=left:\tiny{$v_{1}$}] {};
    \node (5) at (.5,1.5) [vertex, label=right:\tiny{$u_{1}$}] {};
    \node (6) at (-.5,2.25) [vertex, label=left:\tiny{$v_{2}$}] {};
    \node (7) at (.5,2.25) [vertex, label=right:\tiny{$u_{2}$}] {};
    \node (8) at (0,3) [vertex, label=above:\tiny{$y$}] {};
    \node (0) at (0,-1) {(ii)};

    \path [->] (1) edge node[left] {\tiny{$i$}} (2);
    \path [draw=red, ->](1) edge node[right] {\tiny{$i+1$}} (3);
    \path [->](2) edge node[left] {\tiny{$i+1$}} (4);
    \path [draw=red, ->](3) edge node[right] {\tiny{$i$}} (5);
    \path [->](4) edge node[left] {\tiny{$i+1$}} (6);
    \path [draw=red, ->](5) edge node[right] {\tiny{$i$}} (7);
    \path [->](6) edge node[left] {\tiny{$i$}} (8);
    \path [draw=red, ->](7) edge node[right] {\tiny{$i+1$}} (8);
    
\end{tikzpicture}
\qquad
\begin{tikzpicture}
[auto,
vertex/.style={circle,draw=black!100,fill=black!,thick,inner sep=0pt,minimum size=1mm}]
    \node (1) at (0,0) [vertex, label=below:\tiny{$x$}] {};
    \node (2) at (-.5,.75) [vertex, label=left:\tiny{$v_{0}$}] {};
    \node (3) at (.5,.75) [vertex, label=right:\tiny{$u_{0}$}] {};
    \node (4) at (-.5,1.5) [vertex, label=left:\tiny{$v_{1}$}] {};
    \node (5) at (.5,1.5) [vertex, label=right:\tiny{$u_{1}$}] {};
    \node (6) at (-.5,2.25) [vertex, label=left:\tiny{$v_{2}$}] {};
    \node (7) at (.5,2.25) [vertex, label=right:\tiny{$u_{2}$}] {};
    \node (8) at (0,3) [vertex, label=above:\tiny{$y$}] {};
     \node (0) at (0,-1) {(iii)};

    \path [->] (1) edge node[left] {\tiny{$n-2$}} (2);
    \path [draw=red, ->](1) edge node[right] {\tiny{$n$}} (3);
    \path [->](2) edge node[left] {\tiny{$n$}} (4);
    \path [draw=red, ->](3) edge node[right] {\tiny{$n-2$}} (5);
    \path [->](4) edge node[left] {\tiny{$n$}} (6);
    \path [draw=red, ->](5) edge node[right] {\tiny{$n-2$}} (7);
    \path [->](6) edge node[left] {\tiny{$n-2$}} (8);
    \path [draw=red, ->](7) edge node[right] {\tiny{$n$}} (8);
    
\end{tikzpicture}
\caption{Structure of minimal skipped intervals in simply laced case}
\label{Stem type A}
\end{figure}
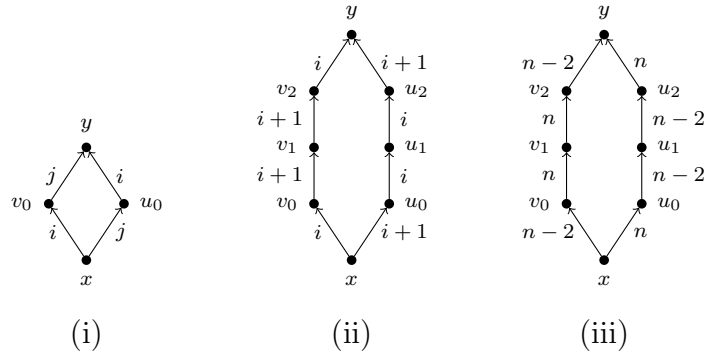
When we have a minimal skipped interval that arises from a Stembridge relation as in Figure \ref{Stem type A}, we say the minimal skipped interval involves the crystal operators, e.g. the minimal skipped interval in (ii) involves the crystal operators $f_{i}$ and $f_{i+1}$.

When $\mathcal{B}$ is the crystal of a highest weight representation of finite doubly laced type, in addition to the Stembridge relations, minimal skipped intervals may also arise from the degree five and degree seven Sternberg relations. By Theorem \ref{possible Sternberg}, we know the degree five and degree seven Sternberg relations can only occur upward from some vertex $x$ where $f_{n-1}(x) \neq 0$ and $f_{n}(x) \neq 0$. Therefore, we say that minimal skipped intervals arising from Sternberg relations involve the crystal operators $f_{n-1}$ and $f_{n}$. The possible Sternberg relations are shown below where the saturated chains in red, namely those with vertices labeled by $u_{i}$, represent the piece of the Sternberg relation that may be on a fully covered saturated chain as described above in the simply laced case. 
\begin{figure}[h]
\centering
	\begin{tikzpicture}
[auto,
vertex/.style={circle,draw=black!100,fill=black!,thick,inner sep=0pt,minimum size=1mm}]
    \node (a) at (0,0) [vertex, label=below:\tiny{$x$}] {};
    \node (b) at (1,1) [vertex, label=right:\tiny{$u_{0}$}] {};
    \node (c) at (1,2) [vertex, label=right:\tiny{$u_{1}$}] {};
    \node (d) at (1,3) [vertex, label=right:\tiny{$u_{2}$}] {};
    \node (e) at (1,4) [vertex, label=right:\tiny{$u_{3}$}] {};
    \node (f) at (0,5) [vertex, label=above:\tiny{$y$}] {};
    \node (g) at (-1,1) [vertex] {};
    \node (h) at (-1,2) [vertex] {};    
    \node (i) at (-.5,3) [vertex] {};
    \node (j) at (-1.5,3) [vertex] {};
    \node (k) at (-1,4) [vertex] {};
    \node (5) at (3, 2.5) {or};
    \node (1) at (0,-1) {(i)};

    \path [draw=red, ->] (a) edge node[right] {\tiny{$n$}} (b);
    \path [draw=red, ->](b) edge node[right] {\tiny{$n-1$}} (c);
    \path [draw=red, ->](c) edge node[right] {\tiny{$n-1$}} (d);
    \path [draw=red, ->](d) edge node[right] {\tiny{$n-1$}} (e);
    \path [draw=red, ->](e) edge node[right] {\tiny{$n$}} (f);
    \path [->](a) edge node[left] {\tiny{$n-1$}} (g);
    \path [->](g) edge node[left] {\tiny{$n$}} (h);
    \path [->](h) edge node[right] {\tiny{$n$}} (i); 
    \path [->](h) edge node[left] {\tiny{$n-1$}} (j); 
    \path [->](i) edge node[right] {\tiny{$n-1$}} (k); 
    \path [->](j) edge node[left] {\tiny{$n$}} (k); 
    \path [->](k) edge node[left] {\tiny{$n-1$}} (f); 
\end{tikzpicture}
\qquad
\begin{tikzpicture}
[auto,
vertex/.style={circle,draw=black!100,fill=black!,thick,inner sep=0pt,minimum size=1mm}]
    \node (a) at (0,0) [vertex, label=below:\tiny{$x$}] {};
    \node (b) at (1,.7) [vertex, label=right:\tiny{$u_{0}$}] {};
    \node (c) at (1,1.4) [vertex, label=right:\tiny{$u_{1}$}] {};
    \node (d) at (1,2.1) [vertex, label=right:\tiny{$u_{2}$}] {};
    \node (e) at (.5,2.8) [vertex] {};
    \node (f) at (1.5,2.8) [vertex, label=right:\tiny{$u_{3}$}] {};
    \node (g) at (1,3.5) [vertex, label=right:\tiny{$u_{4}$}] {};
    \node (h) at (1,4.2) [vertex, label=right:\tiny{$u_{5}$}] {};    
    \node (i) at (0,4.9) [vertex, label=above:\tiny{$y$}] {};
    \node (k) at (-1,.7) [vertex] {};
    \node (l) at (-1,1.4) [vertex] {};
    \node (m) at (-.5,2.1) [vertex] {};    
    \node (n) at (-1.5,2.1) [vertex] {};
    \node (o) at (-1,2.8) [vertex] {};
    \node (p) at (-1,3.5) [vertex] {};
    \node (q) at (-1,4.2) [vertex] {};
    \node (1) at (0,-1) {(ii)};
    
    \path [draw=red, ->] (a) edge node[right] {\tiny{$n$}} (b);
    \path [draw=red, ->](b) edge node[right] {\tiny{$n-1$}} (c);
    \path [draw=red, ->](c) edge node[right] {\tiny{$n-1$}} (d);
    \path [draw=red, ->](d) edge node[right] {\tiny{$n-1$}} (f);
    \path [->](d) edge node[left] {\tiny{$n$}} (e);
    \path [->](e) edge node[left] {\tiny{$n-1$}} (g);
    \path [draw=red, ->](f) edge node[right] {\tiny{$n$}} (g);
    \path [draw=red, ->](g) edge node[right] {\tiny{$n$}} (h); 
    \path [draw=red, ->](h) edge node[right] {\tiny{$n-1$}} (i); 
    \path [->](a) edge node[left] {\tiny{$n-1$}} (k); 
    \path [->](k) edge node[left] {\tiny{$n$}} (l); 
    \path [->](l) edge node[right] {\tiny{$n$}} (m); 
    \path [->](l) edge node[left] {\tiny{$n-1$}} (n); 
    \path [->](m) edge node[right] {\tiny{$n-1$}} (o); 
    \path [->](n) edge node[left] {\tiny{$n$}} (o); 
    \path [->](o) edge node[left] {\tiny{$n-1$}} (p);
    \path [->](p) edge node[left] {\tiny{$n-1$}} (q);
    \path [->](q) edge node[left] {\tiny{$n$}} (i);
\end{tikzpicture}

\end{figure}
\begin{figure}[h]
\centering
	\begin{tikzpicture}
[auto,
vertex/.style={circle,draw=black!100,fill=black!,thick,inner sep=0pt,minimum size=1mm}]
    \node (a) at (0,0) [vertex, label=below:\tiny{$x$}] {};
    \node (b) at (-1,1) [vertex] {};
    \node (c) at (-1,2) [vertex] {};
    \node (d) at (-1,3) [vertex] {};
    \node (e) at (-1,4) [vertex] {};
    \node (f) at (0,5) [vertex, label=above:\tiny{$y$}] {};
    \node (g) at (1,1) [vertex, label=right:\tiny{$u_{0}$}] {};
    \node (h) at (1,2) [vertex, label=right:\tiny{$u_{1}$}] {};    
    \node (i) at (1.5,3) [vertex, label=right:\tiny{$u_{2}$}] {};
    \node (j) at (.5,3) [vertex] {};
    \node (k) at (1,4) [vertex, label=right:\tiny{$u_{3}$}] {};
    \node (5) at (3, 2.5) {or};
    \node (1) at (0,-1) {(iii)};
    
    \path [->] (a) edge node[left] {\tiny{$n-1$}} (b);
    \path [->](b) edge node[left] {\tiny{$n$}} (c);
    \path [->](c) edge node[left] {\tiny{$n$}} (d);
    \path [->](d) edge node[left] {\tiny{$n$}} (e);
    \path [->](e) edge node[left] {\tiny{$n-1$}} (f);
    \path [draw=red, ->](a) edge node[right] {\tiny{$n$}} (g);
    \path [draw=red, ->](g) edge node[right] {\tiny{$n-1$}} (h);
    \path [draw=red, ->](h) edge node[right] {\tiny{$n-1$}} (i); 
    \path [->](h) edge node[left] {\tiny{$n$}} (j); 
    \path [draw=red, ->](i) edge node[right] {\tiny{$n$}} (k); 
    \path [->](j) edge node[left] {\tiny{$n-1$}} (k); 
    \path [draw=red, ->](k) edge node[right] {\tiny{$n$}} (f); 
\end{tikzpicture}
\qquad
\begin{tikzpicture}
[auto,
vertex/.style={circle,draw=black!100,fill=black!,thick,inner sep=0pt,minimum size=1mm}]
    \node (a) at (0,0) [vertex, label=below:\tiny{$x$}] {};
    \node (b) at (-1,.7) [vertex] {};
    \node (c) at (-1,1.4) [vertex] {};
    \node (d) at (-1,2.1) [vertex] {};
    \node (e) at (-1.5,2.8) [vertex] {};
    \node (f) at (-.5,2.8) [vertex] {};
    \node (g) at (-1,3.5) [vertex] {};
    \node (h) at (-1,4.2) [vertex] {};    
    \node (i) at (0,4.9) [vertex, label=above:\tiny{$y$}] {};
    \node (k) at (1,.7) [vertex, label=right:\tiny{$u_{0}$}] {};
    \node (l) at (1,1.4) [vertex, label=right:\tiny{$u_{1}$}] {};
    \node (m) at (1.5,2.1) [vertex, label=right:\tiny{$u_{2}$}] {};    
    \node (n) at (.5,2.1) [vertex] {};
    \node (o) at (1,2.8) [vertex, label=right:\tiny{$u_{3}$}] {};
    \node (p) at (1,3.5) [vertex, label=right:\tiny{$u_{4}$}] {};
    \node (q) at (1,4.2) [vertex, label=right:\tiny{$u_{5}$}] {};
    \node (1) at (0,-1) {(iv)};
    
    \path [->] (a) edge node[left] {\tiny{$n-1$}} (b);
    \path [->](b) edge node[left] {\tiny{$n$}} (c);
    \path [->](c) edge node[left] {\tiny{$n$}} (d);
    \path [->](d) edge node[right] {\tiny{$n$}} (f);
    \path [->](d) edge node[left] {\tiny{$n-1$}} (e);
    \path [->](e) edge node[left] {\tiny{$n$}} (g);
    \path [->](f) edge node[right] {\tiny{$n-1$}} (g);
    \path [->](g) edge node[left] {\tiny{$n-1$}} (h); 
    \path [->](h) edge node[left] {\tiny{$n$}} (i); 
    \path [draw=red, ->](a) edge node[right] {\tiny{$n$}} (k); 
    \path [draw=red, ->](k) edge node[right] {\tiny{$n-1$}} (l); 
    \path [draw=red, ->](l) edge node[right] {\tiny{$n-1$}} (m); 
    \path [->](l) edge node[left] {\tiny{$n$}} (n); 
    \path [draw=red, ->](m) edge node[right] {\tiny{$n$}} (o); 
    \path [->](n) edge node[left] {\tiny{$n-1$}} (o); 
    \path [draw=red, ->](o) edge node[right] {\tiny{$n$}} (p);
    \path [draw=red, ->](p) edge node[right] {\tiny{$n$}} (q);
    \path [draw=red, ->](q) edge node[right] {\tiny{$n-1$}} (i);
\end{tikzpicture}
\caption{Additional minimal skipped intervals in doubly laced case}\label{Sternberg relations}
\end{figure}
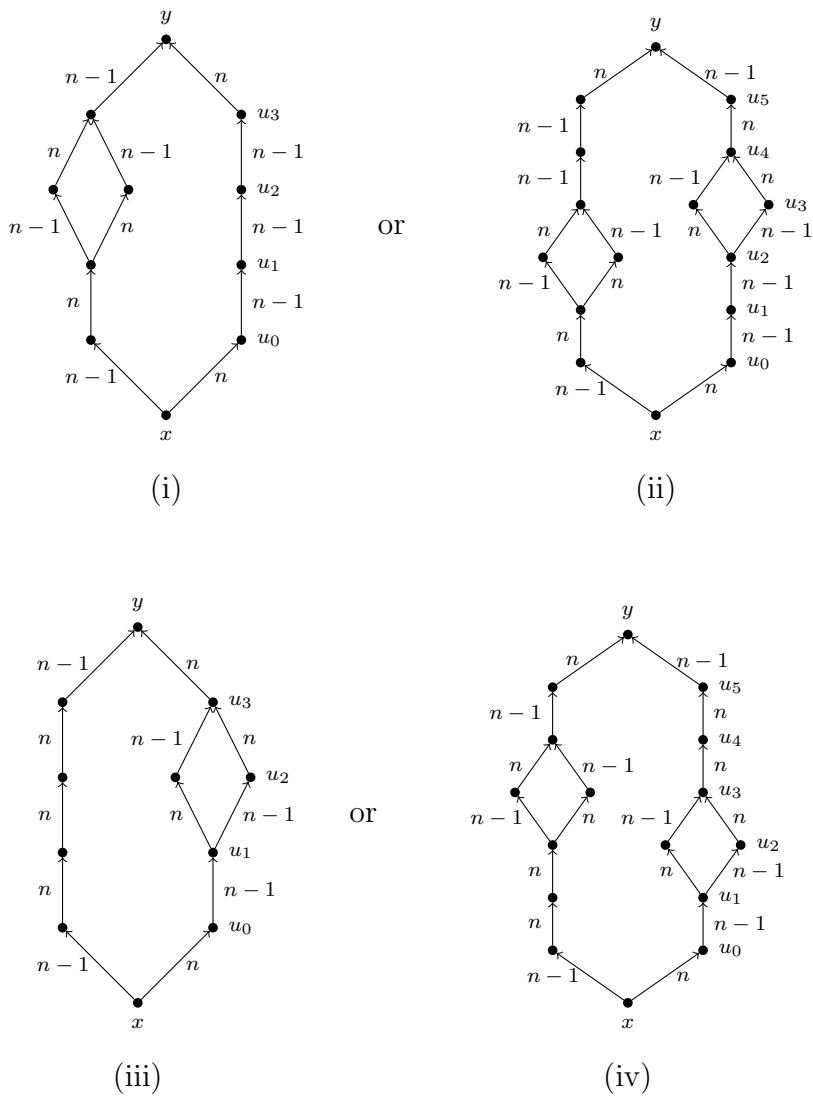

\begin{rmk}
Note that unlike in the simply laced case, the chain within the Sternberg relations that is a candidate to be a part of a fully covered saturated chain is not always lexicographically last. This is due to the degree two Stembridge relations sitting inside the degree five and degree seven Sternberg relations.
\end{rmk}

\begin{definition}
Let $x$ be a vertex along a saturated chain $C$ in $[u,v]$ such that there is a minimal skipped interval for the interval system of $C$ involving the crystal operators $f_{i}$ and $f_{l}$ that begins at $x$, where the edge created by applying $f_{i}$ to $x$ is along $C$. Let $I'$ be the set of indices of the crystal operators that need to be applied along $C$ from $f_{i}(x)$ to $v$. We say that $f_{j}$ is the \textit{maximal operator for $f_{i}$ at $x$} if 
\begin{equation*}
j = \max \{k | k \in I' \text{ and } k < i \}.
\end{equation*}
\end{definition}

\begin{rmk}
This is well defined since there is a finite choice of crystal operators used in a crystal of finite classical type and we can always determine which crystal operators will be used along any saturated chain by Lemma \ref{saturated chain same labels}. Also note that $j$ need not equal $l$.
\end{rmk}

\begin{definition}
We define a saturated chain $C$ to be \textit{greedily maximal} if given any $x$ along $C$ where the edge created by applying $f_{i}$ to $x$ along $C$ is the start of a minimal skipped interval involving the crystal operators $f_{i}$ and $f_{j}$, we have that $f_{j}$ is the maximal operator for $f_{i}$ at $x$.
\end{definition}

\begin{rmk}
Since we are assuming every minimal skipped interval arises from a Stembridge or Sternberg relation, each minimal skipped interval involves exactly two crystal operators. Therefore, this definition is well-defined.
\end{rmk}

In order to prove our main result connecting the M\"obius function of an interval $[u,v]$ with relations among crystal operators within this interval, we first prove a series of lemmas. We begin by proving the following lemma for crystals of highest weight representations of all finite types.

\begin{lemma}\label{max first}
Let $[u,v] \subseteq \mathcal{B}$ be a Stembridge or Stembridge and Sternberg only interval, for $\mathcal{B}$ the crystal of a highest weight representation of finite type. Let $j = \max \{ k | f_{k}$ is applied along any saturated chain from $u \text{ to } v \}$, then $f_{j}$ must be the first operator applied along a fully covered saturated chain.\end{lemma}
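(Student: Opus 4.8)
The plan is to argue by a descent on the rank at which $f_j$ first appears along a fully covered chain. Suppose $C \colon u = u_0 \lessdot u_1 \lessdot \cdots \lessdot u_N = v$ is fully covered, and let $R$ be the least rank for which the edge $u_R \lessdot u_{R+1}$ of $C$ carries the label $j$. Such an $R$ exists because, by Lemma \ref{saturated chain same labels}, every saturated chain from $u$ to $v$ uses the same multiset of edge labels, and $j$ lies in that multiset. The goal is to show $R = 0$, which is exactly the statement that $f_j$ is applied first; so I would assume toward a contradiction that $R \ge 1$.

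Since $C$ is fully covered and $1 \le R \le N-1$ (the source $u_R$ of the label-$j$ edge is a proper rank, and by Remark \ref{proper part} only the proper ranks need to be covered), rank $R$ lies in the support of some minimal skipped interval $\mathcal I$ in the interval system of $C$. By the description of minimal skipped intervals in Remark \ref{general MSI description}, $\mathcal I$ begins at a vertex $u_s$ with $s < R$, and because its support contains rank $R$ its top vertex $u_t$ satisfies $t \ge R+1$. Hence the portion of $C$ lying inside $\mathcal I$, namely $u_s \lessdot u_{s+1} \lessdot \cdots \lessdot u_t$, contains the label-$j$ edge $u_R \lessdot u_{R+1}$.

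The heart of the argument is to push $f_j$ strictly below rank $R$. Under the standing hypothesis every minimal skipped interval arises from a Stembridge or Sternberg relation and so involves exactly two crystal operators, say $f_a$ and $f_b$; consequently the segment $u_s \lessdot \cdots \lessdot u_t$ uses only the labels $a$ and $b$, as one sees in Figures \ref{Stem type A} and \ref{Sternberg relations}. Since this segment contains a label-$j$ edge, $j \in \{a,b\}$, and because $j$ is the global maximum label we get $j = \max\{a,b\}$. On the other hand, $\mathcal I$ exists precisely because the cover $u_s \lessdot u_{s+1}$ taken by $C$ admits a lexicographically earlier cover out of $u_s$; as the two chains agree below $u_s$, ``lexicographically earlier'' forces the competing label to be strictly smaller than the label of $u_s \lessdot u_{s+1}$. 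Thus $C$ leaves $u_s$ along the larger of the two operators of the relation, namely $f_{\max\{a,b\}} = f_j$. So $C$ applies $f_j$ at rank $s < R$, contradicting the minimality of $R$. Hence $R = 0$, and $f_j$ is the first operator applied along $C$ (in particular $f_j(u) = u_1 \neq 0$, so this is consistent).

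I expect the one delicate point to be the final matching of lexicographic data to the magnitude of operators: it rests entirely on the two structural inputs that a minimal skipped interval involves exactly two crystal operators and that $C$ enters such an interval along the \emph{larger} of those two operators. Both are guaranteed in our setting by the classification of admissible relations recorded in Theorem \ref{possible Sternberg} together with the explicit shapes in Figures \ref{Stem type A} and \ref{Sternberg relations}, where in every relation the branch carried by a potential fully covered chain leaves the bottom vertex along the larger-indexed operator. Everything else — the existence of $R$, the location of the covering interval $\mathcal I$, and the containment of the label-$j$ edge inside $\mathcal I$ — is bookkeeping once these two facts are in hand.
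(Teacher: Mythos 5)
Your proof is correct and takes essentially the same approach as the paper's: both argue by contradiction at the first occurrence of an edge labeled $j$ along a fully covered chain, using that every minimal skipped interval arises from a two-operator Stembridge or Sternberg relation and that the chain must enter such an interval along the larger of its two labels (since the lexicographically earlier competitor branches off the bottom vertex with the strictly smaller label). The only difference is bookkeeping: the paper concludes that the rank just below the first $j$-edge would be left uncovered, while you conclude that any interval covering it would force a $j$-edge at a strictly smaller rank; these are the same contradiction.
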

\begin{proof}
Suppose by way of contradiction that there is a fully covered saturated chain, $C$, from $u$ to $v$ such that $f_{j}$ is not the first operator applied along $C$. Consider the first occurrence of the crystal operator $f_{j}$ as we proceed upward along $C$ from $u$ towards $v$, namely the first edge colored $j$. By definition of $j$, the label $k$ on the edge immediately preceding the edge colored $j$ on $C$ is such that $k < j$. Since all Stembridge and Sternberg relations involve exactly two crystal operators and all minimal skipped intervals in $[u,v]$ arise from Stembridge or Sternberg relations, the rank corresponding to the vertex labeled $x$ below on the fully covered saturated chain $C$ will not be covered by any minimal skipped intervals, as we justify next.
\begin{figure}[h]
\centering
\begin{tikzpicture}
[auto,
vertex/.style={circle,draw=black!100,fill=black!,thick,inner sep=0pt,minimum size=1.2mm}]
    \node (C) at (-3.5,0) {$C:$};
    \node (u) at (-3,0) [vertex, label=below:\tiny{$u$}] {};
    \node (ellipsis 1) at (-1.5,0) {$\cdots$};
    \node (x) at (0,0) [vertex] {};
    \node (y) at (1,0)[vertex, label=below:\tiny{$x$}] {};
    \node (z) at (2,0) [vertex] {};
    \node (ellipsis 2) at (3.5,0) {$\cdots$};
    \node (v) at (5,0) [vertex, label=below:\tiny{$v$}] {};
    
    \path [thick, draw=black, ->] (u) edge node[right] {} (ellipsis 1);
    \path [thick, draw=black, ->] (ellipsis 1) edge node[above] {} (x);
    \path [thick, draw=black, ->] (x) edge node[above] {\tiny{$k$}} (y);
    \path [thick, draw=black, ->](y) edge node[above] {\tiny{$j$}} (z);
    \path [thick, draw=black, ->] (z) edge node[above] {} (ellipsis 2);
    \path [thick, draw=black, ->] (ellipsis 2) edge node[above] {} (v);

\end{tikzpicture}
\end{figure}
If the rank corresponding to the vertex labeled $x$ were covered by some minimal skipped interval, the corresponding Stembridge or Sternberg relation must involve the crystal operators $f_{k}$ and $f_{j}$. However, since $k < j$, this piece of the Stembridge or Sternberg relation along $C$ will be lexicographically earlier than the piece with edge label sequence $(j,k)$. Hence, we will not have a minimal skipped interval covering the rank corresponding to the vertex $x$. This contradicts the saturated chain $C$ being fully covered.
\end{proof}

The interval systems for crystals of simply laced types behave differently than those for doubly laced types. We first focus on results for simply laced types and then generalize to the doubly laced case.

\begin{lemma}\label{nonoverlapping}
Let $[u,v] \subseteq \mathcal{B}$ where $\mathcal{B}$ is the crystal of a highest weight representation of simply laced type. Assume $[u,v]$ is a Stembridge only interval. In this case, the interval system of any fully covered saturated chain in $[u,v]$ is non-overlapping i.e. no truncation algorithm is necessary. 
\end{lemma}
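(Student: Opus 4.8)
The plan is to exploit the classification of minimal skipped intervals recorded just before the statement: in a Stembridge only interval every MSI of a saturated chain $C$ arises from a degree two or a degree four Stembridge relation, and so (Figure \ref{Stem type A}) has exactly one of two shapes. A degree two relation yields an MSI of height $1$, whose support is a single rank; a degree four relation between operators $f_p$ and $f_P$ with $p<P$ (either $P=p+1$, or $\{p,P\}=\{n-2,n\}$ in type $D_n$) yields an MSI of height $3$ whose three consecutive ranks are traversed along $C$ by the forced label pattern $P,p,p,P$, read bottom to top. Since distinct MSIs of $C$ are distinct rank intervals, I would prove non-overlapping by showing that no two MSIs of $C$ can share a rank, arguing directly from these two shapes.

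First I would dispose of the height-one case. If an MSI $M$ has support a single rank $\{t\}$ and shares a rank with another MSI $M'$, then $t\in M'$; if $M'\neq\{t\}$ then $\{t\}$ is a skipped interval properly contained in $M'$, contradicting the minimality of $M'$. Hence a height-one MSI overlaps no other MSI, and any overlapping pair must consist of two height-three MSIs.

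The heart of the argument is then a short label computation. Suppose $M$ (support $[r+1,r+3]$, operators $f_p,f_P$) and $M'$ (support $[s+1,s+3]$, operators $f_q,f_Q$) are height-three MSIs of $C$ with overlapping supports. As they are distinct length-three intervals, neither contains the other, so the overlap is proper, and after swapping names I may assume $r<s\le r+2$, that is $s\in\{r+1,r+2\}$. Because both MSIs prescribe the edge labels of the \emph{same} chain $C$, on every shared edge the two forced patterns $P,p,p,P$ and $Q,q,q,Q$ must agree. When $s=r+1$, the edges at ranks $(r+1,r+2)$ and $(r+2,r+3)$ give $Q=p$ and $q=p$, so $q=Q$, contradicting $q<Q$. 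When $s=r+2$, the edges at ranks $(r+2,r+3)$ and $(r+3,r+4)$ give $Q=p$ and $q=P$, whence $p<P=q<Q=p$, again a contradiction. Thus no two MSIs of $C$ overlap, so the interval system is non-overlapping and no truncation is required.

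The only real subtlety, and the step I would take care to justify, is the claim that a degree four MSI forces the pattern $P,p,p,P$ on the corresponding edges of $C$ (equivalently, that $C$ must traverse the lexicographically later branch of the relation), together with the fact that heights $1$ and $3$ are genuinely the only possibilities in the simply laced case; both are exactly what Figure \ref{Stem type A} and the surrounding discussion establish, so the proof reduces to the bookkeeping above. I note that this argument makes no use of $C$ being fully covered, so it in fact shows that the interval system of \emph{every} saturated chain in a Stembridge only interval is non-overlapping.
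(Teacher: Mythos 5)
Your proof is correct and takes essentially the same approach as the paper's: both reduce to the classification of minimal skipped intervals into height-one (degree two) and height-three (degree four) shapes from Figure \ref{Stem type A}, dispose of height-one overlaps via minimality, and then rule out two overlapping height-three intervals by exactly the two cases where the second begins one or two ranks above the first (the paper's vertices $u_{0}$ and $u_{1}$). Your explicit comparison of the forced label patterns $P,p,p,P$ and $Q,q,q,Q$ is just a more concrete rendering of the paper's lexicographic argument, and your closing observation that full coveredness is never used is accurate.
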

\begin{proof}
Let $C$ be a fully covered saturated chain from $u$ to $v$ and let $I$ be the interval system for $C$. Any minimal skipped interval in $I$ is of the form seen in Figure \ref{Stem type A}. The first type of minimal skipped interval coming from the degree two Stembridge relation covers exactly one rank. Therefore, any minimal skipped interval arising from this relation cannot overlap with another minimal skipped interval. Hence, we restrict our attention to minimal skipped intervals that arise from the degree four Stembridge relation $f_{i}f_{i+1}^{2}f_{i} = f_{i+1}f_{i}^{2}f_{i+1}$ (or potentially $f_{n-2}f_{n}^{2}f_{n-2} = f_{n}f_{n-2}^{2}f_{n}$ in type $D_{n}$). 

Suppose we have some vertex $x \in C$ such that there is a minimal skipped interval for the interval system of $C$ beginning at $x$ coming from a degree four Stembridge relation. If there exists another minimal skipped interval that overlaps with the one arising from this Stembridge relation, then using the notation from Figure \ref{Stem type A}, it must either begin at the vertex $u_{0}$ or the vertex $u_{1}$. Since $[u,v]$ is a Stembridge only interval, if we have a minimal skipped interval beginning at $u_{0}$ or $u_{1}$, it must come from a degree two or degree four Stembridge relation. In fact, it must come from a degree four Stembridge relation. If not, the minimal skipped interval beginning at $x$ for the interval system of $C$ that arises from the degree four Stembridge relation would not be minimal. 

However, we cannot have a minimal skipped interval beginning at $u_{0}$ because the lexicographically last chain in a degree four Stembridge relation does not have an edge label sequence beginning with $i, i, i+1$. We also cannot have a minimal skipped interval beginning at $u_{1}$ since we have $f_{i}$ being applied before $f_{i+1}$ (or potentially $f_{n-2}$ being applied before $f_{n}$ in type $D_{n}$), and therefore we would only see the lexicographically earlier piece of a Stembridge relation on $C$. As a result, this will not give rise to a minimal skipped interval. Therefore, the interval system of $C$ will be non-overlapping.  
\end{proof}

The following two lemmas give restrictions on the structure of fully covered saturated chains in the simply laced case. These are necessary to prove that if there is a a fully covered saturated chain in a given interval, then this chain is unique.

\begin{lemma}\label{greedily maximal}
Let $\mathcal{B}$ be the crystal of a highest weight representation of type $A_{n}$ and $[u,v] \subseteq \mathcal{B}$ be a Stembridge only interval, then any fully covered saturated chain in $[u,v]$ is greedily maximal.
\end{lemma}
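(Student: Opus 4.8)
The plan is to argue by contradiction and extract an uncovered rank, in direct analogy with the proof of Lemma \ref{max first}. First I would record the local shape of a fully covered chain $C$: write $C$ as $c_{0} \lessdot \dots \lessdot c_{N}$ with edge labels $\ell_{1}, \dots, \ell_{N}$. By Lemma \ref{nonoverlapping} the minimal skipped intervals of $C$ are non-overlapping, and since $[u,v]$ is Stembridge only they are forced into the shapes of Figure \ref{Stem type A}: each one is either a \emph{degree-two} interval arising from a descent $\ell_{a+1} > \ell_{a+2}$ (covering the single rank $a+1$), or a \emph{degree-four} interval whose four labels read $p, p-1, p-1, p$ (covering three consecutive ranks, the larger operator applied first, as established in the proof of Lemma \ref{nonoverlapping}). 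Because $C$ is fully covered, these intervals tile all proper ranks.

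Next I would dispose of the degree-four case of the hypothesis. If the minimal skipped interval beginning at $x$ that involves $f_{i}$ and $f_{l}$ is a degree-four interval, then on $C$ the operator $f_{i}$ is applied first, forcing $l = i-1$; since $f_{i-1}$ is then applied along $C$ above $f_{i}(x)$, we have $i-1 \in I'$, whence $j = \max\{k \in I' : k < i\} = i-1 = l$ and $C$ is already greedily maximal at $x$. It therefore suffices to treat the case where the interval beginning at $x = c_{a}$ is degree two, i.e. $\ell_{a+1} = i$ and $\ell_{a+2} = l$ with $l < i$, covering exactly rank $a+1$.

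Now assume for contradiction that $C$ is fully covered but $f_{l}$ is not the maximal operator for $f_{i}$ at $x$, so some edge of $C$ above $f_{i}(x)$ carries a label strictly between $l$ and $i$. Let $b$ be least with $b \geq a+2$ and $\ell_{b} > l$; then $b \geq a+3$ and $\ell_{b-1} \leq l < \ell_{b}$, a strict ascent into $c_{b-1}$. I claim rank $b-1$ is uncovered. No degree-two interval covers it, since that would require the descent $\ell_{b-1} > \ell_{b}$. No degree-four interval covers it either: the ascent $\ell_{b-1} < \ell_{b}$ together with the pattern $p, p-1, p-1, p$ forces $c_{b-1}$ to be the topmost middle vertex of such a block, so that block has an \emph{earlier} edge (three below) again labeled $\ell_{b} > l$; this earlier edge either has index $\geq a+2$, contradicting the minimality of $b$, or would fall among the edges $\{a+1, a+2\}$, contradicting both the non-overlap of Lemma \ref{nonoverlapping} and the fact that the interval at $x$ is degree two (it would force $l = i-1$). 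Hence rank $b-1$ is uncovered, contradicting full coverage; equivalently, every label above $f_{i}(x)$ is $\leq l$, so $j = l$ and $C$ is greedily maximal.

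The main obstacle is precisely this last step — ruling out that a degree-four minimal skipped interval sneaks in to cover the ascent-bottom rank $b-1$ — and its resolution mirrors Lemma \ref{max first}: choosing $b$ minimal and invoking the non-overlap of Lemma \ref{nonoverlapping} makes the repeated larger label of any putative degree-four block collide with an earlier edge. The only bookkeeping to watch is that $C$ is not graded by a single operator, but Lemma \ref{saturated chain same labels} guarantees the multiset of labels, and hence the set $I'$, is well defined, which is all the argument requires.
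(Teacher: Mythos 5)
Your proof is correct, and it follows the same overall strategy as the paper's: first dispose of the degree-four case (where the second operator is forced to be $f_{i-1}$, which is automatically the maximal operator), then, in the degree-two case, produce a strict ascent above $f_{i}(x)$ whose bottom rank cannot be covered by any degree-two or degree-four minimal skipped interval, contradicting full coverage exactly as in Lemma \ref{max first}. The one substantive difference is your choice of pivot, and it is a genuine refinement. The paper considers the first edge labeled $j$ (the maximal operator) and asserts that the edge below it has label strictly less than $j$ ``by definition of maximal operator''; as literally written, this overlooks the possibility that the edge below the first $j$-edge carries a label $\ge i$. You instead take $b$ minimal with $\ell_{b}>l$, so that minimality alone forces $\ell_{b-1}\le l$ and the needed ascent is guaranteed with no unexamined case; as a byproduct you get the stronger conclusion that no label above $f_{i}(x)$ exceeds $l$ at all. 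Your three-way check that no degree-four block can cover rank $b-1$ --- the repeated top label three edges below collides with minimality of $b$ when $b-3\ge a+2$, with the non-overlap of Lemma \ref{nonoverlapping} (and the fact that only one Stembridge relation can hold at $x$) when $b-3=a+1$, and with the label pattern itself (it would force $i=l$) when $b-3=a$ --- is exactly the mechanism the paper appeals to, just carried out explicitly; your phrasing ``among the edges $\{a+1,a+2\}$'' lumps the last two cases loosely, but both readings reach valid contradictions. In short: same route, but your more careful case analysis closes a small gap in the published argument.
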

\begin{proof}

Let $x$ be a vertex along a fully covered saturated chain $C$ such that $x$ is the last rank covered by some minimal skipped interval in the interval system for $C$. Since $C$ is fully covered and there is no overlap among minimal skipped intervals in the simply laced case, $x$ must be the start of a new minimal skipped interval for the interval system of $C$. Suppose the first edge along $C$ in this minimal skipped interval is labeled $i$, i.e. comes from the application of the crystal operator $f_{i}$. Let $j$ be the index such that $f_{j}$ is the maximal operator for $f_{i}$ at $x$. Assume by way of contradiction that the minimal skipped interval upward from $x$ involves $f_{i}$ and $f_{k}$ where $k \neq j$. Since $f_{k}$ is not the maximal operator for $f_{i}$ at $x$, we know that $k <j$.

We note that since $i > j > k$, we cannot have $k = i - 1$. This implies the minimal skipped interval involving $f_{i}$ and $f_{k}$ arises from a degree two Stembridge relation, $f_{k}f_{i}(x) = f_{i}f_{k}(x)$. Therefore, the next time there is an edge colored $j$ upward from $x$ to $v$ along $C$, the edge below it on $C$ will have label strictly less than $j$ by definition of maximal operator. This contradicts $C$ being fully covered via the same argument as the proof of Lemma \ref{max first}.
\end{proof}

\begin{rmk}
The idea for crystals of type $D_{n}$ will be similar but will require some extra care. We need to take into account that for crystals of type $D_{n}$ the Stembridge relations that can occur among the crystal operators $f_{n-2}, f_{n-1}$ and $f_{n}$ are different than those that occur in type $A_{n}$. Namely, $f_{n-1}$ and $f_{n}$ can only be involved in a degree two Stembridge relation, while it is possible to have a degree four Stembridge relation involving $f_{n-2}$ and $f_{n}$. This is the content of Lemma \ref{type D}. See Example \ref{type D example} for an illustration of this.
\end{rmk}

\begin{definition}
Let $\mathcal{B}$ be the crystal of a highest weight representation of type $D_{n}$. Let $[u,v] \subseteq \mathcal{B}$ be a Stembridge only interval. Let $x$ be a vertex on a fully covered saturated chain $C$ from $u$ to $v$. We say that $x$ is an \textit{(n,n-2)-special vertex} if there is an $n$-edge upward from $x$ along $C$ which is the start of a minimal skipped interval for the interval system of $C$ and the number of times $f_{n}$ needs to be applied along $C$ from $f_{n}(x)$ to $v$ is nonzero.
\end{definition}


\begin{lemma}\label{type D}
Suppose that $[u,v] \subseteq \mathcal{B}$ is a Stembridge only interval, for $\mathcal{B}$ a crystal of a highest weight representation of type $D_{n}$. Let $C$ be a fully covered saturated chain from $u$ to $v$. For any $(n,n-2)$-special vertex $x$ along $C$, $f_{n-2}$ is the maximal operator for $f_{n}$ at $x$. Under this condition, $C$ is greedily maximal.
\end{lemma}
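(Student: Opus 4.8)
My plan is to adapt the contradiction argument of Lemma \ref{greedily maximal} to type $D_n$, the essential new feature being that $f_{n-1}$ commutes with $f_n$ (since $a_{n-1,n}=0$) rather than sharing a degree four relation with it, as recorded in Theorem \ref{possible Sternberg} and the remark preceding the statement. I would split the proof to match the two assertions: first, that at every $(n,n-2)$-special vertex the maximal operator for $f_n$ is $f_{n-2}$; and second, that, granting this, $C$ is greedily maximal. Throughout I use that $[u,v]$ is a Stembridge only interval, so every minimal skipped interval has one of the shapes in Figure \ref{Stem type A}, together with the non-overlapping structure supplied by Lemma \ref{nonoverlapping}.

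For the first assertion, fix an $(n,n-2)$-special vertex $x$, so the $n$-edge out of $x$ begins a minimal skipped interval of the interval system of $C$ and $f_n$ is applied at least once more from $f_n(x)$ to $v$. Writing $I'$ for the index set of operators applied along $C$ from $f_n(x)$ to $v$, I must show $\max\{k\in I' : k<n\}=n-2$, i.e.\ both $n-2\in I'$ and $n-1\notin I'$. The first step is to identify the opening relation: since $f_n$'s only non-commuting partner below it is $f_{n-2}$, the $n$-edge that opens a minimal skipped interval and forces a later reapplication of $f_n$ is realized by the degree four relation $f_{n-2}f_n^2f_{n-2}=f_nf_{n-2}^2f_n$ along its lexicographically later chain with label sequence $(n,n-2,n-2,n)$, from which $n-2\in I'$ follows. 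The crux is then $n-1\notin I'$, which I would prove by contradiction in the spirit of Lemma \ref{max first}: assuming $f_{n-1}$ occurs after $f_n(x)$, I locate the first such $(n-1)$-edge, use the commutation $f_{n-1}f_n=f_nf_{n-1}$ to transport it past the second $f_n$ of the degree four relation, and aim to expose a rank of $C$ that no minimal skipped interval can cover. As in Lemma \ref{max first}, the key point is that a segment with smaller label below larger label is lexicographically earlier than its transpose and so cannot be the later chain of any Stembridge relation, leaving the intervening rank uncovered and contradicting that $C$ is fully covered.

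Granting the first assertion, the second is a case analysis over the operator $f_i$ that opens a minimal skipped interval at a vertex of $C$. When $i\le n-1$, the relations between $f_i$ and operators of smaller index are exactly the degree two and degree four Stembridge relations, just as in type $A_n$ (node $n$ affects only $f_{n-2}$, which is irrelevant to indices $k<i$ once $i\le n-1$), so the argument of Lemma \ref{greedily maximal} applies verbatim. When $i=n$ and $x$ is not special, $f_n$ does not recur, so the opening interval cannot be the degree four $\{n-2,n\}$ relation and must be a degree two commutation $f_lf_n=f_nf_l$; the Lemma \ref{greedily maximal} argument then forces $l$ to be the maximal operator. The remaining case, $i=n$ with $x$ special, is precisely the first assertion. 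Hence every opening minimal skipped interval is paired with its maximal operator, and $C$ is greedily maximal.

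The main obstacle is the first assertion, and within it the exclusion $n-1\notin I'$. In type $A_n$ the analogue is immediate because adjacent operators never commute, so a smaller label sitting below a larger one can never be absorbed into a relation and instantly yields an uncovered rank. In type $D_n$ the commutation $f_{n-1}f_n=f_nf_{n-1}$ means an $(n-1)$-edge lying just above an $n$-edge \emph{does} generate a legitimate minimal skipped interval, so the contradiction is no longer local: it must be produced only after commuting $f_{n-1}$ through both copies of $f_n$ in the degree four relation and tracking how the resulting minimal skipped intervals interact with the non-overlapping structure of Lemma \ref{nonoverlapping}. Verifying that this interaction genuinely leaves some rank uncovered, rather than merely redistributing the covering, is the delicate type-$D$-specific heart of the argument, and is exactly where the distinctive commutation of $f_{n-1}$ and $f_n$ must be exploited.
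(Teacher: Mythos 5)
Your overall architecture (a Lemma~\ref{max first}-style contradiction at special vertices, then the type $A_{n}$ argument for greedy maximality) matches the paper, but your formulation of the first assertion is not what the lemma can mean, and the step you call the crux would fail. You read ``$f_{n-2}$ is the maximal operator for $f_{n}$ at $x$'' literally, as $n-2\in I'$ \emph{and} $n-1\notin I'$, and you propose to derive a contradiction whenever an $(n-1)$-edge occurs after $f_{n}(x)$ along $C$. That claim is false, and the paper's own Example~\ref{type D example} is a counterexample: in the type $D_{3}$ interval shown there, the fully covered chain $C$ has label sequence $(3,1,1,3,2)$, the vertex $u$ is $(3,1)$-special, and $f_{2}=f_{n-1}$ is applied after $f_{3}(u)$ (it is the last edge of $C$), so $n-1\in I'$. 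No uncovered rank arises, because the final segment with labels $(3,2)$ is the lexicographically \emph{later} side of the commutation square $f_{2}f_{3}=f_{3}f_{2}$ and therefore itself generates a legitimate minimal skipped interval covering the intervening rank. This is exactly why your plan of transporting the $(n-1)$-edge past the copies of $f_{n}$ to expose an uncovered rank cannot succeed: an $(n-1)$-edge sitting directly above an $n$-edge is always absorbed into a square MSI, so the mere presence of $n-1$ in $I'$ yields no contradiction.

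What the lemma actually asserts (as its proof and Example~\ref{type D example} make clear) is that at an $(n,n-2)$-special vertex the minimal skipped interval beginning at $x$ must arise from the degree four relation between $f_{n}$ and $f_{n-2}$, and that $f_{n-2}$ is then \emph{declared} to play the role of the maximal operator at such vertices --- overriding the literal definition, which in the example would have selected $f_{2}$. The paper's argument does not exclude $n-1$ from $I'$: if the MSI at $x$ came from a degree two relation $f_{i}f_{n}=f_{n}f_{i}$ with $i\neq n-2$ (including $i=n-1$), consider the \emph{next} $n$-edge along $C$, which exists since $x$ is special. The edge below it carries some label $k<n$, so the rank between them could only be covered by an MSI whose chain segment is the lex-later piece of a relation involving $f_{k}$ and $f_{n}$; a square would require the descending pair $(n,k)$ there, which is not what $C$ does, and the octagon would require the segment $(n,n-2,n-2,n)$ ending at that edge, which is impossible because there is no $n$-edge strictly between $x$'s and this one. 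Hence that rank is uncovered, contradicting full coverage, so the MSI at $x$ must be the $\{n-2,n\}$ octagon. Your second part (the case analysis giving greedy maximality once the special-vertex claim is in place) is fine and agrees with the paper, but the first assertion needs to be restated and proved in this form, not as the membership statement $n-1\notin I'$.
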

\begin{proof}
Recall that in crystals of type $D_{n}$, $f_{n}$ and $f_{n-1}$ commute whenever there exists a $y \in [u,v]$ such that $f_{n}(y) \neq 0$ and $f_{n-1}(y) \neq 0$. Let $C$ be a fully covered saturated chain from $u$ to $v$ and $x \in C$ be an $(n,n-2)$-special vertex. If the minimal skipped interval upward from $x$ involves $f_{n}$ and $f_{i}$ where $i \neq n-2$, then there will be an uncovered rank as we travel upwards from $x$ towards $v$ along $C$, which we justify next.

The claim holds because the minimal skipped interval involving $f_{n}$ and $f_{i}$ beginning at $x$ in $C$ will come from a degree two Stembridge relation. Consider the next edge labeled $n$ proceeding upwards along $C$. Since $n$ is the largest possible edge label occurring on saturated chains from $u$ to $v$, the edge in $C$ below the edge colored $n$ will have label $k$ for some $k \in [n]$ where $k < n$. By the nature of Stembridge relations, the rank corresponding to the vertex between the $k$ edge and the $n$ edge must be uncovered as seen in the proof of Lemma \ref{max first}. 

Therefore, the only way to have $C$ be a fully covered saturated chain is if the maximal operator for $f_{n}$ at $x$ is $f_{n-2}$. This is because if the minimal skipped interval for $C$ beginning at $x$ comes from a degree four Stembridge relation involving $f_{n}$ and $f_{n-2}$, then the next time there is a vertex on $C$ such that $f_{n}(x)$ is also along $C$, it is the start of a new minimal skipped interval and the rank corresponding to this vertex is contained in a previous minimal skipped interval. Hence, in order to potentially have a fully covered saturated chain in this case, $f_{n-2}$ must play the role of the maximal operator for $f_{n}$. If we view $f_{n-2}$ as the maximal operator for $f_{n}$ when we have an $(n,n-2)$-special vertex, then the proof of a fully covered saturated chain $C$ being greedily maximal is analogous to the type $A_{n}$ case seen in Lemma \ref{greedily maximal}.
\end{proof}

\begin{ex}\ytableausetup{boxsize=1.1em}\label{type D example}
Consider the type $D_{3}$ crystal $\mathcal{B}_{2,1,1}$ and the interval $[u,v]$ shown below where
 
\begin{equation*} u = \fontsize{6pt}{7} \begin{ytableau} \tiny{1} & 2 \\ \overline{3} \\ 3 \end{ytableau}, \quad v = \fontsize{6pt}{7} \begin{ytableau} 2 & \overline{2} \\ \overline{3} \\ \overline{1} \end{ytableau}.
\end{equation*}
Here, we have $[u,v]$ is a Stembridge only interval and $\mu(u,v) = -1$. To get from $u$ to $v$, the crystal operator $f_{3}$ must be applied more than once. We consider the saturated chain $C$ with label sequence $(3,1,1,3,2)$. This saturated chain is fully covered by two minimal skipped intervals. The first comes the edges beginning at $u$ with label sequence $(1,3,3,1)$. The second minimal skipped interval begins at the fourth vertex along $C$ and comes from the edges with label sequence $(2,3)$. Here, we have that $u$ is a $(3,1)$-special vertex and $f_{1}$ plays the role of maximal operator for $f_{3}$ at $u$ instead of $f_{2}$. We note that $C$ is not the lexicographically last chain in this interval.

\begin{figure}[h]
\fontsize{4.8pt}{6.7}
\centering
	\begin{tikzpicture}
    \node (a) at (0,0) {$\begin{ytableau} 1 & 2 \\ \overline{3}  \\ 3 \end{ytableau}$};
    \node (b) at (-2,2)  {$\begin{ytableau} 2 & 2 \\ \overline{3}  \\ 3 \end{ytableau}$};
    \node (c) at (0,2)  {$\begin{ytableau} 1 & 3 \\ \overline{3}  \\ 3 \end{ytableau}$};
    \node (d) at (2,2)  {$\begin{ytableau} 1 & 2 \\ \overline{3}  \\ \overline{2} \end{ytableau}$};
    \node (e) at (-4,4)  {$\begin{ytableau} 2 & 3 \\ \overline{3}  \\ 3 \end{ytableau}$};
    \node (f) at (-2,4) {$\begin{ytableau} 2 & \overline{3} \\ \overline{3}  \\ 3 \end{ytableau}$};
    \node (g) at (0,4) {$\begin{ytableau} 1 & 3 \\ \overline{3}  \\ \overline{2} \end{ytableau}$};   
    \node (h) at (2,4) {$\begin{ytableau} 2 & 2 \\ \overline{3}  \\ \overline{2} \end{ytableau}$};    
    \node (i) at (-4,6) {$\begin{ytableau} 2 & \overline{2} \\ \overline{3}  \\ 3 \end{ytableau}$};
    \node (k) at (-2,6) {$\begin{ytableau} 2 & \overline{3} \\ \overline{3}  \\ \overline{2} \end{ytableau}$};
    \node (l) at (0,6) {$\begin{ytableau} 2 & 3 \\ \overline{3}  \\ \overline{2} \end{ytableau}$};
    \node (m) at (2,6) {$\begin{ytableau} 2 & 2 \\ \overline{3}  \\ \overline{1} \end{ytableau}$};    
    \node (n) at (-2,8) {$\begin{ytableau} 2 & \overline{2} \\ \overline{3}  \\ \overline{2} \end{ytableau}$}; 
    \node (o) at (0,8) {$\begin{ytableau} 2 & \overline{3} \\ \overline{3}  \\ \overline{1} \end{ytableau}$};
    \node (p) at (2,8) {$\begin{ytableau} 2 & 3 \\ \overline{3}  \\ \overline{1} \end{ytableau}$};
    \node (q) at (0,10) {$\begin{ytableau} 2 & \overline{2} \\ \overline{3}  \\ \overline{1} \end{ytableau}$};

    \path [draw=blue, ->] (a) edge node[left] {$1$} (b);
    \path [draw=red, ->](a) edge node[right] {$2$} (c);
    \path [draw=green, ->](a) edge node[right] {$3$} (d);
    \path [draw=red, ->](b) edge node[left] {$2$} (e);
    \path [draw=green, ->](b) edge node[right] {$3$} (f);
    \path [draw=blue, ->](c) edge node[left] {$1$} (e);
    \path [draw=green, ->](c) edge node[right] {$3$} (g);
    \path [draw=red, ->](d) edge node[right] {$2$} (g); 
    \path [draw=blue, ->](d) edge node[right] {$1$} (h); 
    \path [draw=green, ->](e) edge node[left] {$3$} (i); 
    \path [draw=red, ->](f) edge node[right] {$2$} (i); 
    \path [draw=green, ->](f) edge node[right] {$3$} (k); 
    \path [draw=blue, ->](g) edge node[left] {$1$} (l); 
    \path [draw=red, ->](h) edge node[right] {$2$} (l); 
    \path [draw=blue, ->](h) edge node[right] {$1$} (m); 
    \path [draw=green, ->](i) edge node[left] {$3$} (n);
    \path [draw=red, ->](k) edge node[right] {$2$} (n);
    \path [draw= blue, ->](k) edge node[right] {$1$} (o);
    \path [draw=blue, ->] (l) edge node[left] {$1$} (p);
    \path [draw=green, ->] (m) edge node[right] {$3$} (o);
    \path [draw=red, ->] (m) edge node[right] {$2$} (p);
    \path [draw=blue, ->] (n) edge node[left] {$1$} (q);
    \path [draw=red, ->] (o) edge node[left] {$2$} (q);
    \path [draw=green, ->] (p) edge node[right] {$3$} (q);
\end{tikzpicture}

\end{figure}

\end{ex}

\begin{rmk}
While Lemma \ref{max first} and Lemma \ref{nonoverlapping} hold for all simply laced and doubly laced finite Kac Moody algebras, the next two Lemmas are not able to be generalized to include types $E_{6}, E_{7}$ and $E_{8}$. For these types, we are able to have degree four Stembridge relations occurring among the crystal operators $f_{4}$ and $f_{3}$ as well as $f_{4}$ and $f_{2}$. This implies that we may not have a unique fully covered saturated chain as the greedily maximal chain may not be the only fully covered one. 
\end{rmk}

Lemma \ref{type D} gives the type $D$ analogue to Lemma \ref{greedily maximal} regarding fully covered saturated chains being greedily maximal. We now give a description of how to find the unique fully covered saturated chain in crystals of types $A_{n}$ and $D_{n}$ when it exists. 

\begin{thm}\label{one sat chain type A}
Suppose we have an interval $[u,v] \subseteq \mathcal{B}$ that is Stembridge only, for $\mathcal{B}$ the crystal of a highest weight representation of type $A_{n}$ or $D_{n}$. In this case, there is at most one fully covered saturated chain in $[u,v]$.
\end{thm}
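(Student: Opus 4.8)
The plan is to prove uniqueness by showing that a fully covered saturated chain, if one exists, is completely determined by a deterministic greedy procedure, so that two fully covered chains must coincide. I would argue by induction on the rank $N$ of the interval $[u,v]$, the base case $N \le 1$ being immediate. The engine of the induction is that the \emph{first} minimal skipped interval of any fully covered chain $C$ is forced, and that peeling it off reduces to a strictly shorter Stembridge only interval.

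First I would pin down the bottom of $C$. By Lemma \ref{saturated chain same labels} the multiset of edge labels used from $u$ to $v$ is determined by $\mathrm{wt}(u)$ and $\mathrm{wt}(v)$; let $j$ be its maximum. By Lemma \ref{max first}, the first edge of $C$ must be $f_j$, so it is the start of a minimal skipped interval $M_1$. By greedy maximality --- Lemma \ref{greedily maximal} in type $A_n$, and Lemma \ref{type D} (via the $(n,n-2)$-special vertex analysis) in type $D_n$ --- the partner operator of $M_1$ is $f_{j'}$, where $j'$ is the maximal operator for $f_j$ at $u$; this index is again read off from the remaining label multiset. Since $[u,v]$ is Stembridge only and both $f_j(u)$ and $f_{j'}(u)$ are defined, exactly one Stembridge relation (degree two or degree four, according to the Cartan-data classification given in the propositions preceding Theorem \ref{possible Sternberg}) holds upward from $u$ between $f_j$ and $f_{j'}$. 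This fixes the type of $M_1$, hence every edge of $C$ from $u$ up to the vertex $a$ of rank $\max(\mathrm{supp}\, M_1)$, the vertex at which the next minimal skipped interval must begin.

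I would then reduce to the interval $[a,v]$. Being a subinterval of a Stembridge only interval, $[a,v]$ is itself Stembridge only, and it has strictly smaller rank. The point to verify is that the restriction of $C$ to $[a,v]$ is again fully covered: by Lemma \ref{nonoverlapping} the interval system of $C$ is non-overlapping, so the minimal skipped intervals $M_2, M_3, \dots$ have skipped-rank sets that partition exactly the internal ranks lying above $a$, and because each such interval arises from a local Stembridge relation whose lexicographically earlier competitor agrees with $C$ below its branch vertex (all of which lie at or above $a$), these intervals remain minimal skipped intervals of $C|_{[a,v]}$ relative to $[a,v]$. Thus $C|_{[a,v]}$ is a fully covered chain in $[a,v]$, and by the inductive hypothesis it is unique; combined with the already-forced portion from $u$ to $a$, this shows $C$ is unique.

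The main obstacle I anticipate is precisely this localization step: one must check carefully that passing from $[u,v]$ to $[a,v]$ neither destroys a minimal skipped interval above $a$ nor introduces a new, lexicographically earlier competitor that alters the interval system. The non-overlapping property of Lemma \ref{nonoverlapping} is what makes the skipped-rank sets tile cleanly, but the bookkeeping of branch vertices --- which, as the introductory example shows, may sit inside the skipped set of the preceding interval --- needs attention. A secondary subtlety is the type $D_n$ case, where the partner of $f_n$ at an $(n,n-2)$-special vertex is $f_{n-2}$ rather than the literally largest available index below $n$; Lemma \ref{type D} is exactly what licenses treating this as the greedy choice so that the determination of $M_1$ remains unambiguous.
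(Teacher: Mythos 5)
Your proposal is correct and takes essentially the same approach as the paper: the same lemmas (Lemma \ref{max first}, Lemma \ref{greedily maximal}, Lemma \ref{type D}, Lemma \ref{nonoverlapping}) force the identity of each minimal skipped interval in turn, so any fully covered chain is determined by the greedy choice of maximal operators. The paper merely organizes this as an iterative ``repeat the process'' walk up the chain inside $[u,v]$, applying greedy maximality at every vertex, which is why it never needs the explicit localization to the subinterval $[a,v]$ that your induction requires; that localization step is sound (lexicographic comparison of chains sharing the prefix $C|_{[u,a]}$ reduces to comparison of suffixes, and all minimal skipped intervals come from Stembridge relations based at vertices of the chain), but it is the one place where your write-up does work the paper avoids.
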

\begin{proof}
If we have a fully covered saturated chain in $[u,v]$, then it is greedily maximal as seen in Lemmas \ref{greedily maximal} and \ref{type D}. 
We now describe how to find the unique greedily maximal chain in a given interval when it exists. This will tell us that in any interval, there is at most one fully covered saturated chain.

From Lemma \ref{max first}, we know that in order to have a fully covered saturated chain, the chain must start with the application of the crystal operator $f_{k}$ where \[k = \max \{i | f_{i} \text{ applied to get from } u \text{ to } v \}.\]
Moreover, this says that if $f_{k}(u) = 0$, then there is no fully covered saturated chain in $[u,v]$. Assuming now that $f_{k}(u) \neq 0$, we next need to apply $f_{j}$ where $f_{j}$ is the maximal operator for $f_{k}$ at $u$. The beginning of our potential fully covered saturated chain is now $u \lessdot f_{k}(u) \lessdot f_{j}f_{k}(u)$. In order for the rank corresponding to the vertex $f_{k}(u)$ to be covered by a minimal skipped interval, we need that the chain $u \lessdot f_{k}(u) \lessdot f_{j}f_{k}(u)$ is contained within a Stembridge relation. This will only be the case if $f_{j}(u) \neq 0$. If $f_{j}(u) = 0$, then we will not have a fully covered saturated chain in this interval because the rank corresponding to the vertex $f_{k}(u)$ will be uncovered. If $f_{j}(u) \neq 0$, then we travel up the lexicographically later chain in the Stembridge relation upward from $u$ involving $f_{k}$ and $f_{j}$. 

Once we hit the last rank that this minimal skipped interval covers, we repeat the process above. More specifically, this minimal skipped interval described above either ends with the application of $f_{k}$ (in the case where we have a degree four Stembridge relation between $f_{k}$ and $f_{j}$) or $f_{j}$ (in the case where we have a degree two Stembridge relation between $f_{k}$ and $f_{j}$). We then find the maximal operator for either $f_{k}$ or $f_{j}$ depending on if we had a degree two or degree four Stembridge relation and see if this portion of our chain is contained in another Stembridge relation. If not, there is no fully covered saturated chain in this interval. We continue this process until we reach $v$. If there is a saturated chain from $u$ to $v$ such that each minimal skipped interval involves crystal operators $f_{i}$ and $f_{j}$ with $i > j$ such that $f_{j}$ is the maximal operator for $f_{i}$, then we have a fully covered saturated chain. Note that since we chose maximal operators at each step, this chain is uniquely described.
\end{proof}

\begin{cor}\label{simply laced mobius}
For an interval as above, we have $\mu(u,v) \in \{-1,0,1\}.$
\end{cor}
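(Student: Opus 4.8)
The plan is to read $\mu(u,v)$ directly off the lexicographic discrete Morse function that has been built on the order complex $\Delta(u,v)$ using the natural edge labeling $\beta(x,y)=i$ whenever $y=f_i(x)$. The decisive input is Theorem \ref{one sat chain type A}, which guarantees that a Stembridge only interval of type $A_n$ or $D_n$ contains \emph{at most one} fully covered saturated chain. Since a facet of $\Delta(u,v)$ contributes a critical cell precisely when its interval system covers all proper ranks, i.e. when the associated saturated chain is fully covered, this at once bounds the number of critical cells by one. The corollary should then fall out formally from discrete Morse theory together with the M\"obius--Euler correspondence of Theorem \ref{mobiuseuler}.

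First I would nail down the bookkeeping that links fully covered chains to critical cells without the truncation algorithm interfering. By Lemma \ref{nonoverlapping}, the interval system of the (unique, if it exists) fully covered saturated chain is already non-overlapping, so no truncation is required and the $J$-intervals coincide with the minimal skipped intervals. Hence the critical cell, when present, is obtained by selecting the lowest-rank vertex of each minimal skipped interval, and its dimension $d$ equals one less than the number of minimal skipped intervals. With this in hand, the count of critical cells of the Morse function is exactly the number of fully covered saturated chains, which Theorem \ref{one sat chain type A} tells us is $0$ or $1$.

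Next comes the two-case analysis. If $[u,v]$ has no fully covered saturated chain, then the Morse function has no critical cells, so by Theorem \ref{discrete morse} the complex $\Delta(u,v)$ is homotopy equivalent to a CW-complex with no cells; under the reduced conventions of this paper this forces $\tilde\chi(\Delta(u,v))=0$, whence $\mu(u,v)=0$ by Theorem \ref{mobiuseuler}. If instead $[u,v]$ possesses its unique fully covered saturated chain, the Morse function has exactly one critical cell, say of dimension $d$; by the remark following Theorem \ref{discrete morse}, $\Delta(u,v)$ is then homotopy equivalent to a $d$-sphere, so $\tilde\chi(\Delta(u,v))=(-1)^{d}$ and therefore $\mu(u,v)=(-1)^{d}\in\{-1,1\}$. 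Combining the two cases gives $\mu(u,v)\in\{-1,0,1\}$, as claimed.

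The only genuine subtlety, and the step I would be most careful with, is the clean identification of critical cells with fully covered saturated chains: this is exactly what Lemma \ref{nonoverlapping} (no truncation needed) and Theorem \ref{one sat chain type A} (uniqueness) are designed to provide. I would also pause to confirm the reduced-convention accounting in the degenerate situations, in particular the rank-one interval where $\Delta(u,v)$ is void and the single critical cell is the empty face of dimension $d=-1$, so that the formula $\mu(u,v)=(-1)^{d}$ still yields $-1$. Once these points are verified, no further computation is needed and the corollary is immediate.
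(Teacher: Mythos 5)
Your proposal is correct and takes essentially the same route as the paper's proof: both read $\mu(u,v)$ off the lexicographic discrete Morse function via Theorem \ref{mobiuseuler}, use Theorem \ref{one sat chain type A} to bound the number of critical cells by one, and conclude $\tilde\chi(\Delta(u,v)) = \pm 1$ (sphere) or $0$ accordingly. Your explicit appeal to Lemma \ref{nonoverlapping} to rule out truncation issues and your check of the degenerate rank-one case are just careful elaborations of bookkeeping the paper leaves implicit, not a different argument.
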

\begin{proof}
This follows from the correspondence of the reduced Euler characteristic of the order complex of an open interval with the M\"obius function of the interval. More specifically, we have the following:
\[\mu_{P}(u,v) = \tilde{\chi}(\Delta(u,v)) = \tilde{\chi}(\Delta^{M}(u,v)),\]
where $\Delta^{M}(u,v)$ is the CW-complex obtained from the discrete Morse function. Since there is at most one fully covered saturated chain, we have at most one critical cell other than the basepoint. In this case, the cell complex is homotopy equivalent to a sphere of dimension of the critical cell. Hence, the reduced Euler characteristic will be $\pm 1$ if there is a fully covered saturated chain and $0$ otherwise. 
\end{proof}

\begin{rmk}
The converse of Corollary \ref{simply laced mobius} is not true. There do exist intervals $[u,v]$ in crystals of highest weight representations of simply laced type such that $\mu(u,v) \in \{-1,0,1\}$ where there are relations among crystal operators that are not implied by Stembridge relations.
\end{rmk}

In practice, we use the contrapositive of Corollay \ref{simply laced mobius} to search for new relations among crystal operators as will be seen for the doubly laced case in Section 5. We state it here as a corollary.

\begin{cor} \label{main result simply laced}
Let $[u,v] \in \mathcal{B}$, for $\mathcal{B}$ the crystal of a highest weight representation of type $A_{n}$ or $D_{n}$. If $\mu(u,v) \notin \{-1,0,1\}$, then there exists a relation among crystal operators that is not implied by Stembridge relations.
\end{cor}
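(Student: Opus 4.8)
The plan is to obtain this statement as the logical contrapositive of Corollary \ref{simply laced mobius}. Recall that Corollary \ref{simply laced mobius} asserts that whenever $[u,v]$ is a Stembridge only interval in a crystal of type $A_{n}$ or $D_{n}$, its M\"obius function satisfies $\mu(u,v) \in \{-1,0,1\}$. By the definition of a Stembridge only interval, the hypothesis of that corollary is precisely the condition that every relation among crystal operators within $[u,v]$ is implied by Stembridge relations. Thus the content we need is already packaged in the earlier result, and the task here is purely to transpose hypothesis and conclusion.

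Accordingly, I would argue as follows. Suppose $\mu(u,v) \notin \{-1,0,1\}$. If every relation among crystal operators within $[u,v]$ were implied by Stembridge relations, then $[u,v]$ would be a Stembridge only interval, and Corollary \ref{simply laced mobius} would force $\mu(u,v) \in \{-1,0,1\}$, contradicting our assumption. Hence there must exist at least one relation among crystal operators within $[u,v]$ that is not implied by Stembridge relations, which is exactly the desired conclusion.

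The substance of the argument has already been carried out upstream: the construction of the lexicographic discrete Morse function on $\Delta(u,v)$, together with Theorem \ref{one sat chain type A}, shows that a Stembridge only interval possesses at most one fully covered saturated chain, hence at most one critical cell beyond the basepoint, which pins $\mu(u,v)$ to $\{-1,0,1\}$ via the reduced Euler characteristic. There is therefore no genuine obstacle remaining at this stage; the only point requiring care is to confirm that the negation of ``all relations within $[u,v]$ are implied by Stembridge relations'' is precisely ``some relation within $[u,v]$ is not implied by Stembridge relations,'' so that the contrapositive transfers cleanly through the definition of a Stembridge only interval. Since we only ever assert the \emph{existence} of such a relation, no quantitative control over which particular relation appears is needed, and the corollary follows immediately.
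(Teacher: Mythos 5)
Your proposal is correct and matches the paper exactly: the paper itself introduces Corollary \ref{main result simply laced} as nothing more than the contrapositive of Corollary \ref{simply laced mobius}, with the definition of a Stembridge only interval supplying the translation between ``all relations are implied by Stembridge relations'' and the hypothesis of that corollary. Your unwinding of the negation and the appeal to the upstream discrete Morse function argument is precisely the intended (and essentially trivial) step.
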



We now move on to crystals of types $B_{n}$ and $C_{n}$. 
For crystals of type $B_{n}$ and $C_{n}$ it is possible to have a saturated chain whose interval system is overlapping. If this occurs, we use the truncation algorithm to see if we have a fully covered saturated chain. 

\begin{lemma}\label{degree 5 overlap}
Suppose $\mathcal{B}$ is the crystal of a highest weight representation of type $B_{n}$ or $C_{n}$. Let $[u,v] \subseteq \mathcal{B}$ such that $[u,v]$ is a Stembridge and Sternberg only interval. Then there can be no overlap among minimal skipped intervals if no minimal skipped intervals arise from a degree seven Sternberg relation.
\end{lemma}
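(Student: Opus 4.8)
The plan is to first restrict the list of minimal skipped intervals that can occur and then to show that any two of them that share a rank must include one coming from a degree seven Sternberg relation. By Theorem~\ref{possible Sternberg} and the classification of minimal skipped intervals in Figures~\ref{Stem type A} and~\ref{Sternberg relations}, every minimal skipped interval of a saturated chain $C$ in a Stembridge and Sternberg only interval arises from a degree two or degree four Stembridge relation or from a degree five or degree seven Sternberg relation. By hypothesis there are no degree seven minimal skipped intervals, so every minimal skipped interval of $C$ is one of the following, described by the label sequence of the piece of the relation lying along $C$: degree two, with labels $(c,b)$ for $c>b$ covering one rank; degree four, with labels $(a+1,a,a,a+1)$ covering three ranks; or degree five, with labels $(n,n-1,n-1,n-1,n)$ or $(n,n-1,n-1,n,n)$ covering four ranks. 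I will repeatedly use two facts: along the single chain $C$ each vertex has a uniquely determined outgoing label, so the sub-chains of two minimal skipped intervals of $C$ must agree wherever they both run along $C$; and the first edge of every degree five minimal skipped interval is labeled $n$, the unique largest label occurring on chains from $u$ to $v$ (by Theorem~\ref{possible Sternberg}, degree five relations involve only $f_{n-1}$ and $f_{n}$).

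Overlaps between two Stembridge minimal skipped intervals are excluded by the argument of Lemma~\ref{nonoverlapping}, which uses only the label sequences $(c,b)$ and $(a+1,a,a,a+1)$ and therefore applies verbatim in the doubly laced case, including the degree four relation on $f_{n-1},f_{n}$ allowed by Theorem~\ref{possible Sternberg}. It remains to show that a degree five minimal skipped interval overlaps no other minimal skipped interval. Suppose to the contrary that minimal skipped intervals $M$ and $M'$ of $C$ share a rank, with at least one of them of degree five, and label them so that $M$ begins (i.e.\ its sub-chain starts) at a vertex of rank $b$ and $M'$ at a vertex $x'$ of rank $b'$ with $b\le b'$. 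If $b=b'$ the two sub-chains leave the same vertex with the same first label and, being pieces of $C$, coincide as far as both are prescribed, forcing the two relations to be equal; so I may assume $b<b'$. A short rank count then shows directly that $x'$ is an \emph{interior} covered vertex of $M$ and that the label of the edge of $C$ leaving $x'$ equals the first label $\ell'$ of $M'$.

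The core of the proof is to match $\ell'$ against the sequence of interior outgoing labels of $M$ and then test the continuation of $C$ against the label sequence that $M'$ demands. If $M$ is degree five, its interior outgoing labels are $(n-1,n-1,n-1,n)$ or $(n-1,n-1,n,n)$. When $\ell'=n-1$, the vertex $x'$ is a lower covered vertex of $M$ and the next label of $C$ is again $n-1$ or $n$; but the required second label of $M'$ is $n-1$ following an initial $n$ (degree five), or $n-2$ (degree four on $f_{n-2},f_{n-1}$), or a label strictly below $n-1$ (degree two), none of which can equal $n-1$ or $n$, so this case cannot occur. When $\ell'=n$, the interval $M'$ begins with the maximal label and $x'$ is a top covered vertex of $M$; if $x'$ is not the topmost covered vertex the next label of $C$ is $n$, again conflicting with the required second label of $M'$, while if $x'$ is the topmost covered vertex then $M'$ lies entirely above $M$ and the two intervals share no rank, a contradiction. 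The complementary case, in which $M$ is a degree four Stembridge interval and $M'$ is degree five, is the same: $M'$ starts with the maximal label $n$, the only interior label of $M$ equal to $n$ sits at its topmost covered vertex (where $a+1=n$), so $x'$ is that vertex and $M'$ lies above $M$. In every case the assumed overlap is impossible, so no two minimal skipped intervals of degree two, four, or five overlap, and any overlap must involve a degree seven Sternberg relation.

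The step I expect to be the main obstacle is this central matching argument. It requires reconciling, case by case over the two forms of the degree five interval and over whether the lower interval $M$ is degree four or degree five, the forced outgoing label of the shared vertex $x'$ with both the interior label pattern of $M$ and the opening labels of $M'$'s sub-chain. The bookkeeping is delicate precisely because, as noted after Figure~\ref{Sternberg relations}, the degree two Stembridge diamonds sitting inside the degree five relations mean that the sub-chain carrying the skipped interval need not be lexicographically last; one therefore cannot argue by lex-lastness alone and must track the actual edge labels along $C$, which is exactly what the two facts isolated in the first paragraph make possible.
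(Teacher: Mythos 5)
Your proposal is correct and follows essentially the same route as the paper: it delegates Stembridge--Stembridge overlaps to the argument of Lemma~\ref{nonoverlapping}, and rules out overlaps involving a degree five interval by showing that no Stembridge or Sternberg relation's MSI-carrying piece can begin at a non-topmost covered vertex of another such interval, by comparing the labels forced along $C$ with the label sequences the candidate relation would require. Your version is somewhat more systematic than the paper's (you explicitly treat both orderings of the two overlapping intervals and argue via the forced second label rather than via lex-earliness of chains beginning with $f_{n-1}$), but the underlying idea is the same.
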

\begin{proof}
The minimal skipped intervals arising from degree two and degree four Stembridge relations remain non-overlapping in the doubly laced case by the same argument used in Lemma \ref{nonoverlapping}. Therefore, we only need to show that the interval system of some saturated chain $C$ is non-overlapping when there is a minimal skipped interval that arises from a degree five Sternberg relation. The argument is analogous to that of the degree four Stembridge case. 

First, suppose we have a minimal skipped interval coming from (i) in Figure \ref{Sternberg relations}. In this case, the minimal skipped interval covers the ranks corresponding to the vertices $\{u_{0}, u_{1}, u_{2}, u_{3}\}$. Therefore, we need to rule out the possibility of a minimal skipped interval beginning at $u_{0}, u_{1},$ or $u_{2}$. In all of these cases, the minimal skipped interval would involve the crystal operators $f_{n-1}$ and $f_{n}$. However, no saturated chains within a Stembridge or Sternberg relation begins with multiple applications of $f_{n-1}$. Hence, there are no minimal skipped intervals beginning at $u_{0}$ or $u_{1}$. In addition, there is no minimal skipped interval beginning at $u_{2}$ because any Stembridge or Sternberg relation upward from $u_{2}$ would involve $f_{n-1}$ and $f_{n}$. However, since we have $f_{n-1}$ being applied before $f_{n}$, we would only see the lexicographically earlier chain within the Stembridge or Sternberg relation. Therefore, this would not give rise to a minimal skipped interval.

Next, we consider case (iii) from Figure \ref{Sternberg relations}. Again, the only possibilities would be for a minimal skipped interval to begin from $u_{0}, u_{1},$ or $u_{2}$ and it would need to involve the crystal operators $f_{n-1}$ and $f_{n}$. But as before, no saturated chain within a Stembridge or Sternberg relation begins with the repeated application of a single crystal operator so we cannot have a new minimal skipped interval beginning at $u_{0}$ or $u_{2}$. Also, any chain in a Stembridge or Sternberg relation involving $f_{n-1}$ and $f_{n}$ beginning with $f_{n-1}$ will be the lexicographically earlier chain within that Stembridge or Sternberg relation. As a result, there will be no overlap among minimal skipped intervals beginning from a degree five Sternberg relation.
\end{proof}

\begin{lemma}\label{overlapping MSI}
Suppose $\mathcal{B}$ is the crystal of a highest weight representation of type $B_{n}$ or $C_{n}$. Let $[u,v] \subseteq \mathcal{B}$ such that $[u,v]$ is a Stembridge and Sternberg only interval. Let $C$ be a saturated chain from $u$ to $v$ such that it's interval system covers all ranks but with overlap among minimal skipped intervals, then $C$ remains fully covered after the truncation algorithm.
\end{lemma}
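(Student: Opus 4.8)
The plan is to reduce the statement to a purely local analysis of how a degree seven interval can meet its neighbours, and then to run the truncation algorithm by hand. First I would record two facts that hold for the interval system of any saturated chain. Since minimal skipped intervals are \emph{minimal}, none can strictly contain another, so any two that overlap must overlap only partially; listing the intervals in increasing order of their starting rank, this forces both the starting ranks and the ending ranks to be strictly increasing, i.e.\ a ``staircase.'' Moreover, at most one minimal skipped interval begins at each vertex of $C$, so distinct intervals have distinct starting ranks, and by the staircase property distinct ending ranks as well. By Lemma \ref{degree 5 overlap}, the hypothesis that overlaps occur forces at least one of the overlapping intervals to arise from a degree seven Sternberg relation.

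The crux is the structural claim that \emph{every overlap is by exactly one rank, occurring where the top rank of the lower interval coincides with the bottom rank of the higher one}. I would prove this by inspecting a degree seven interval $D$ directly from Figure \ref{Sternberg relations}(ii),(iv): along $D$ the chain $C$ uses only the labels $n-1$ and $n$, in the pattern $(n,n-1,n-1,n-1,n,n,n-1)$ or $(n,n-1,n-1,n,n,n,n-1)$, with interior vertices $u_0,\dots,u_5$. If a minimal skipped interval $E$ overlaps $D$, then along the overlap $C$ is labeled only by $n-1$ and $n$, so $E$ cannot involve $f_{n-2}$ (which would require an $n-2$ edge); hence $E$ is a relation between $f_{n-1}$ and $f_n$, and therefore begins with an $f_n$ edge followed by an $f_{n-1}$ edge. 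Matching this against the label pattern of $C$, the only interior vertex of $D$ that is not its top interior vertex and out of which $C$ reads $n$ then $n-1$ is $u_4$ (this is exactly the vertex created by the two consecutive $n$ edges near the top of a degree seven chain). An interval beginning at $u_4$ shares a single rank with $D$. For the degree four and degree five patterns the only $n$ edge with an admissible continuation is the final one, so a new $f_n$-relation there is merely adjacent to the interval rather than overlapping it, which re-derives Lemmas \ref{nonoverlapping} and \ref{degree 5 overlap}. Exchanging the roles of the two intervals shows symmetrically that $D$ can begin one rank above another interval $E$ only when $E$ is itself degree seven and $D$ begins at the vertex $u_4$ of $E$. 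Thus overlaps occur only between consecutive staircase intervals, always by one rank, and since the ending ranks are distinct and the starting ranks are distinct, each rank is the top of at most one interval and the bottom of at most one interval; in particular no rank lies in three intervals.

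With this in hand I would finish by running the truncation algorithm from the bottom up. When the minimum interval $M$ is moved into the truncated system $J$, the claim shows that at most one other interval overlaps $M$, namely the unique one whose bottom rank equals the top rank of $M$, and that this overlap is the single rank which $M$ itself still covers. Truncation therefore deletes only that one rank from that one interval; no two intervals are trimmed to a common bottom, so after trimming no interval strictly contains another and the ``remove non-minimal intervals'' step discards nothing. Iterating, the union of the resulting $J$-intervals equals the union of the original minimal skipped intervals, which by hypothesis is all of the proper ranks of $[u,v]$. Hence $C$ remains fully covered after truncation.

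I expect the main obstacle to be the structural claim of the second paragraph. The assertion that overlaps are confined to single ranks via the $u_4$ analysis is precisely what rules out the only dangerous configuration, in which three intervals share a lowest region and truncation's deletion of a now-non-minimal interval would uncover a rank. Making the label-matching argument airtight for both degree seven patterns, and checking that relations between $f_{n-1},f_n$ of degree four or five (as well as any relation involving $f_{n-2}$) genuinely cannot contribute a second overlapping rank, is where the real work lies; once single-rank overlaps are established, the coverage-preservation of truncation is essentially bookkeeping.
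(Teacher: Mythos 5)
Your proof is correct and follows essentially the same route as the paper: both reduce to the degree seven Sternberg relation (via Lemmas \ref{nonoverlapping} and \ref{degree 5 overlap}), show that any interval overlapping a degree seven interval must involve $f_{n-1}$ and $f_{n}$ and can only begin at the vertex $u_{4}$, so that every overlap is a single rank (the paper's $u_{5}$), and then observe that truncation merely trims these single bottom ranks without losing coverage. If anything, your staircase argument handling possible chains of several overlapping intervals is more careful than the paper's proof, which asserts that there is exactly one instance of overlap and treats a single overlapping pair $I_{1}, I_{2}$.
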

\begin{proof}
From Lemma \ref{nonoverlapping} and Lemma \ref{degree 5 overlap}, we know that there is no overlap between minimal skipped intervals that arise from Stembridge relations and degree five Sternberg relations. Therefore, we restrict our attention to fully covered saturated chains that have a minimal skipped interval arising from a degree seven Sternberg relation. Let $C$ be one such fully covered saturated chain. 

Suppose the minimal skipped interval for $C$ coming from the degree seven Sternberg relation is of the form seen in Figure \ref{Sternberg relations} (ii) above. Assume this minimal skipped interval begins at some vertex $x \in C$. If the number of times $f_{n}$ needs to be applied to get from $x$ to $v$ is greater than three, then in order for $C$ to be fully covered, we must have overlap among minimal skipped intervals. Similarly, if the minimal skipped interval comes from the degree seven relation seen in Figure \ref{Sternberg relations} (iv) and the number of times $f_{n}$ needs to be applied to get from $x$ to $v$ is greater than four, then there will be overlap among minimal skipped intervals. This is because in either case the piece of the degree seven Sternberg relation chain along our saturated chain $C$ ends with the crystal operator $f_{n-1}$. If $f_{n}$ still needs to be applied along $C$ to reach $v$, the rank corresponding to the first vertex $y$ in $C$ such that $f_{n}(y)$ is also along $C$ will not be contained in a minimal skipped interval if there is no overlap. This is because the edge along $C$ below $y$ will have label $i$ for some $i \in [n]$ where $i < n$. The rank corresponding to the vertex $y$ is uncovered by the same argument seen in Lemma \ref{max first}. To remedy this, we need to be able to have another minimal skipped interval begin with the application of $f_{n}$. This can only happen if there is overlap.

In either case, the overlap among minimal skipped intervals will include only the rank marked by the vertex $u_{5}$ from Figure \ref{Sternberg relations}. The proof of why this is the case is analogous to that seen in Lemma \ref{degree 5 overlap}. Since all minimal skipped intervals arise from Stembridge and Sternberg relations, a new interval can only arise off of the degree seven Sternberg if it starts at $u_{4}$. Depending on how many times $f_{n}$ is applied from $u$ to $v$, the minimal skipped interval may be a degree four Stembridge, degree five or degree seven Sternberg relation. Note that it cannot arise from a degree two Stembridge relation. If this were the case, the original degree seven minimal skipped interval would not in fact be minimal.

We claim that if the original interval system $I$ covered all ranks of $C$, then the truncated interval system $J$ does as well. Since all minimal skipped intervals arising from Stembridge relations and degree five Sternberg relations do not overlap with each other, we only need to worry about the ranks involved with the overlap coming from the degree seven Sternberg relation. If there is any overlap, there will be exactly one rank that lies in two minimal skipped intervals as described above. Let $I_{1}$ and $I_{2}$ be the two overlapping intervals in $I$.

Using the notation from Figure \ref{Sternberg relations}, $I_{1}$ will contain the ranks $\{u_{0}, u_{1}, u_{2}, u_{3}, u_{4}, u_{5}\}.$ Then $I_{2}$ will contain the ranks $\{u_{5}, w_{0}, ..., w_{k}\}$ where $k \in \{1,...,5\}$ dependent on if $I_{2}$ arises from a degree four, degree five or degree seven relation.  Applying the truncation algorithm will give us $J_{1} = \{u_{0}, u_{1}, u_{2}, u_{3}, u_{4}, u_{5}\}$ and $J_{2} = \{ w_{0},..., w_{k} \}$. Since this will be the only instance of overlap, $C$ is still fully covered as the rest of the intervals in $I$ remain unchanged. 
\end{proof}

\begin{thm} Given any $u < v$ in a crystal $\mathcal{B}$ of a highest weight representation of type $B_{n}$ or $C_{n}$ such that $[u,v]$ is a Stembridge and Sternberg only interval,  then there is at most one fully covered saturated chain in $[u,v]$.
\end{thm}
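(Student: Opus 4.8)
The plan is to adapt the greedy construction and uniqueness argument from the simply laced case (Theorem \ref{one sat chain type A}) to the doubly laced setting, using the structural lemmas just established to control the new phenomena introduced by Sternberg relations. The key difference is that minimal skipped intervals may now arise from degree five and degree seven Sternberg relations, and that overlap among minimal skipped intervals can occur. However, Lemma \ref{degree 5 overlap} and Lemma \ref{overlapping MSI} together show that overlap is highly constrained: it occurs only in the presence of a degree seven Sternberg relation, it affects exactly one rank, and the truncation algorithm resolves it while preserving full coverage. This means that, for the purposes of counting fully covered saturated chains, we may reason about the (untruncated) interval system essentially as in the simply laced case.

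First I would establish the analogue of ``greedily maximal'' in the doubly laced case. By Lemma \ref{max first}, any fully covered saturated chain must begin with the application of $f_{k}$ where $k = \max\{i \mid f_{i} \text{ is applied from } u \text{ to } v\}$. I would then argue, in parallel with Lemmas \ref{greedily maximal} and \ref{type D}, that at each vertex $x$ which is the start of a new minimal skipped interval, the operator $f_{i}$ beginning that interval must be paired with its maximal operator $f_{j}$ (where $f_{j}$ is the maximal operator for $f_{i}$ at $x$), with the appropriate modification when $\{i,j\} = \{n-1,n\}$ so that a Sternberg relation is involved. The justification is the same lexicographic argument used repeatedly above: if the minimal skipped interval beginning at $x$ fails to use the maximal available operator, then the next edge labeled by that maximal operator higher along the chain will be preceded by a strictly smaller label, leaving an uncovered rank and contradicting full coverage.

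Next I would describe the greedy construction explicitly, exactly as in the proof of Theorem \ref{one sat chain type A}: start by applying $f_{k}$, then apply the maximal operator $f_{j}$ for $f_{k}$ at $u$, check that the initial segment sits inside a Stembridge or Sternberg relation (which requires $f_{j}(u) \neq 0$ and, in the $\{n-1,n\}$ case, the correct local configuration), travel up the lexicographically appropriate chain of that relation, and repeat from the top rank covered by the resulting minimal skipped interval. Because each minimal skipped interval now involves exactly two crystal operators (Theorem \ref{possible Sternberg} and the remark that follows it), and because at every branch point the choice of which relation governs the next minimal skipped interval is forced by the maximal-operator condition, the chain is determined uniquely step by step. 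If at any stage the required relation fails to exist, there is no fully covered saturated chain; otherwise the process terminates at $v$ and produces the unique candidate.

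The main obstacle I anticipate is the verification that the overlap coming from a degree seven Sternberg relation does not create genuine ambiguity in the greedy construction, i.e. that ``which minimal skipped interval begins at the overlapping rank $u_{5}$'' is still forced rather than a free choice. Here I would lean directly on Lemma \ref{overlapping MSI}: the overlap is confined to the single rank $u_{5}$, the new interval starting at $u_{4}$ is forced to be of degree four, five, or seven (never degree two, lest minimality of the degree seven interval fail), and after truncation the $J$-intervals partition the ranks. Thus the truncated interval system of a fully covered chain is again governed by the maximal-operator rule, so the uniqueness argument of the simply laced case carries over: since every minimal skipped interval pairs an operator $f_{i}$ with its maximal operator $f_{j}$, and these choices are made greedily from $u$ upward with no freedom, at most one such chain exists in $[u,v]$.
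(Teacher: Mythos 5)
Your proposal is correct and takes essentially the same route as the paper's own proof: reduce to the greedy maximal-operator construction of Theorem \ref{one sat chain type A} whenever the interval system is non-overlapping, and invoke Lemma \ref{overlapping MSI} to show that the only possible overlap (the single rank inside a degree seven Sternberg relation) leaves every step of the greedy construction forced, so at most one fully covered chain can exist. The paper's version is terser, but the ingredients and their roles are identical to yours.
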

 
\begin{proof}
If the minimal skipped intervals for the interval system of $C$ are non-overlapping, then the argument from Theorem \ref{one sat chain type A} applies directly. The only change for doubly laced crystals is the potential for overlap coming from minimal skipped intervals that arise from degree seven Sternberg relations. Hence, we limit ourselves to fully covered saturated chains where there is a minimal skipped interval that arises from a degree seven Sternberg relation. 

Let $C$ be one such chain. Suppose $x$ is a vertex along $C$ such that there is a minimal skipped interval for the interval system of $C$ beginning at $x$ coming from a degree seven Sternberg relation. In this case, we check if there is overlap among minimal skipped intervals as described in Lemma \ref{overlapping MSI}. Recall that this overlap can occur at exactly one place. If this happens, we travel up our saturated chain until the end of the last overlapping minimal skipped interval. From there, we once again look for the maximal operator as in the proof of Theorem \ref{one sat chain type A}. At each step, we have exactly one choice so this fully covered saturated chain will be unique, if it exists.
\end{proof}

\begin{cor} \label{doubly laced mobius}
For an interval as above, $\mu(u,v) \in \{-1,0,1\}$.
\end{cor}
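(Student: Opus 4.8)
The plan is to follow the argument for Corollary \ref{simply laced mobius} essentially verbatim, now that the doubly laced analogue of the key uniqueness statement is in hand. The entire content has been front-loaded into the theorem immediately preceding this corollary, which establishes that a Stembridge and Sternberg only interval $[u,v]$ in type $B_n$ or $C_n$ contains at most one fully covered saturated chain. With that in place, the corollary is a formal consequence of the dictionary between the Möbius function, the reduced Euler characteristic, and the critical cells of a discrete Morse function.

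Concretely, I would first invoke the standard identity $\mu(u,v) = \tilde{\chi}(\Delta(u,v))$ from Theorem \ref{mobiuseuler}, applied to the proper part of the interval (see Remark \ref{proper part}). Next I would recall that the lexicographic discrete Morse function constructed on $\Delta(u,v)$ via the Babson--Hersh machinery has the property that a facet contributes a critical cell if and only if the corresponding saturated chain is fully covered. Since the reduced Euler characteristic is a homotopy invariant, Theorem \ref{discrete morse} gives
\[
\mu(u,v) = \tilde{\chi}(\Delta(u,v)) = \tilde{\chi}\bigl(\Delta^{M}(u,v)\bigr),
\]
where $\Delta^{M}(u,v)$ is the CW-complex guaranteed by the discrete Morse function, having exactly one cell for each critical cell.

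The crucial input is then the preceding theorem: in a Stembridge and Sternberg only interval of type $B_n$ or $C_n$ there is at most one fully covered saturated chain, hence at most one critical cell beyond the basepoint. If such a chain exists, the associated Morse complex has exactly one critical cell of some dimension $d$ and no others, so by the remark following Theorem \ref{discrete morse} it is homotopy equivalent to a $d$-sphere, whence $\tilde{\chi} = (-1)^{d} = \pm 1$. If no fully covered saturated chain exists there are no critical cells other than the basepoint, the Morse complex is contractible, and $\tilde{\chi} = 0$. In either case $\mu(u,v) \in \{-1,0,1\}$.

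I do not expect a genuine obstacle here, since the combinatorial heavy lifting---controlling overlaps via the truncation analysis of Lemmas \ref{degree 5 overlap} and \ref{overlapping MSI}, and ruling out a second fully covered chain---was completed in the lemmas and the theorem above. The only point requiring a word of care is that the doubly laced setting admits genuine overlap among minimal skipped intervals coming from degree seven Sternberg relations; but Lemma \ref{overlapping MSI} guarantees that after the truncation algorithm such a chain is still fully covered and still contributes exactly one critical cell, so the count of critical cells is unaffected and the argument of Corollary \ref{simply laced mobius} transfers without modification.
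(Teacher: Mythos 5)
Your proof is correct and is essentially the paper's own argument: the paper simply states that the proof is completely analogous to that of Corollary \ref{simply laced mobius}, which runs exactly through the chain $\mu(u,v) = \tilde{\chi}(\Delta(u,v)) = \tilde{\chi}(\Delta^{M}(u,v))$ and the fact that the preceding theorem supplies at most one critical cell beyond the basepoint. Your additional remark about overlap from degree seven Sternberg relations being handled by Lemma \ref{overlapping MSI} is a fair elaboration of why the analogy goes through, but it is the same route.
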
 
\begin{proof}
The proof is completely analogous to that of Corollary \ref{simply laced mobius}.
\end{proof}

As in the simply laced case, we use the contrapositive of Corollary \ref{doubly laced mobius} to search for new relations among crystal operators. We state this here as a corollary. For an example illustrating this result, see Section \ref{application}.

\begin{cor} \label{main result doubly laced}
Let $[u,v] \in \mathcal{B}$, for $\mathcal{B}$ the crystal of a highest weight representation of type $B_{n}$ or $C_{n}$. If $\mu(u,v) \notin \{-1,0,1\}$, then there exists a relation among crystal operators that is not implied by Stembridge or Sternberg relations.
\end{cor}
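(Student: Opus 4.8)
The plan is to obtain this result as nothing more than the logical contrapositive of Corollary \ref{doubly laced mobius}. That corollary is stated for ``an interval as above,'' which in context means a Stembridge and Sternberg only interval $[u,v]$ in a crystal of type $B_{n}$ or $C_{n}$, and it asserts that for every such interval $\mu(u,v) \in \{-1,0,1\}$. By definition, $[u,v]$ being a Stembridge and Sternberg only interval means precisely that all relations among crystal operators within $[u,v]$ are implied by Stembridge or Sternberg relations. Thus the entire proof consists of unpacking this definition and negating it.

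First I would assume the hypothesis $\mu(u,v) \notin \{-1,0,1\}$ and invoke Corollary \ref{doubly laced mobius} in contrapositive form: since every Stembridge and Sternberg only interval has M\"obius function lying in $\{-1,0,1\}$, the interval $[u,v]$ cannot itself be a Stembridge and Sternberg only interval. Next I would read off the meaning of this failure directly from the definition: if $[u,v]$ is not Stembridge and Sternberg only, then it is not the case that all relations among crystal operators within $[u,v]$ are implied by Stembridge or Sternberg relations, and hence at least one such relation must fail to be so implied. This is exactly the asserted conclusion.

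There is essentially no obstacle to overcome, as all of the substantive work has already been carried out in establishing Corollary \ref{doubly laced mobius} (and, beneath it, the uniqueness of the fully covered saturated chain together with the identification $\mu(u,v) = \tilde{\chi}(\Delta(u,v)) = \tilde{\chi}(\Delta^{M}(u,v))$ supplied by the lexicographic discrete Morse function). The only point deserving genuine attention is to confirm that the negation of the ``Stembridge and Sternberg only'' condition yields the \emph{existence} of a relation not implied by Stembridge or Sternberg relations, rather than some weaker assertion; this is immediate, since the definition universally quantifies over all relations within the interval, so its negation is precisely an existence statement of the required form.
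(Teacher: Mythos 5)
Your proposal is correct and matches the paper exactly: the paper itself presents Corollary \ref{main result doubly laced} as nothing more than the contrapositive of Corollary \ref{doubly laced mobius}, with the definition of a Stembridge and Sternberg only interval supplying the translation between ``not all relations are implied'' and ``there exists a relation not implied.'' Your attention to the quantifier-negation point is a nice touch, but there is no substantive difference from the paper's reasoning.
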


\section{Application of Corollary \ref{main result doubly laced}}\label{application}
\begin{figure}[h]
\fontsize{4.8pt}{6.7}
\ytableausetup{boxsize=1.2em}
\centering
\begin{tikzpicture}
    \node (a) at (0,0) {$\begin{ytableau} 1 & 1 & 2 & 3\\ 3 & 3 & \overline{3}  \\ \overline{3} \end{ytableau}$};
    \node (b) at (-2,2)  {$\begin{ytableau} 1 & 1 & 2 & 3\\ 3 & 3 & \overline{3}  \\ \overline{2} \end{ytableau}$};
    \node (c) at (2,2)  {$\begin{ytableau} 1 & 2 & 2 & 3\\ 3 & 3 & \overline{3}  \\ \overline{3} \end{ytableau}$};
    \node (d) at (-3,4)  {$\begin{ytableau} 1 & 1 & 2 & 3\\ 3 & \overline{3} & \overline{3}  \\ \overline{2} \end{ytableau}$};
    \node (e) at (-1,4)  {$\begin{ytableau} 1 & 2 & 2 & 3\\ 3 & 3 & \overline{3}  \\ \overline{2} \end{ytableau}$};
    \node (f) at (2,4) {$\begin{ytableau} 1 & 2 & 3 & 3\\ 3 & 3 & \overline{3}  \\ \overline{3} \end{ytableau}$};
    \node (g) at (-3,6) {$\begin{ytableau} 1 & 1 & 3 & 3\\ 3 & \overline{3} & \overline{3}  \\ \overline{2} \end{ytableau}$};
    \node (h) at (-1,6) {$\begin{ytableau} 1 & 2 & 2 & 3\\ 3 & \overline{3} & \overline{3}  \\ \overline{2} \end{ytableau}$};    
    \node (i) at (1,6) {$\begin{ytableau} 1 & 2 & 3 & \overline{3}\\ 3 & 3 & \overline{3}  \\ \overline{3} \end{ytableau}$};
    \node (k) at (3,6) {$\begin{ytableau} 1 & 2 & 3 & 3\\ 3 & 3 & \overline{3}  \\ \overline{2} \end{ytableau}$};
    \node (l) at (-2,8) {$\begin{ytableau} 1 & 2 & 3 & 3\\ 3 & \overline{3} & \overline{3}  \\ \overline{2} \end{ytableau}$};
    \node (m) at (2,8) {$\begin{ytableau} 1 & 2 & 3 & \overline{3}\\ 3 & 3 & \overline{3}  \\ \overline{2} \end{ytableau}$};    
    \node (n) at (-2,10) {$\begin{ytableau} 1 & 2 & 3 & 3\\ 3 & \overline{3} & \overline{2}  \\ \overline{2} \end{ytableau}$};
    \node (o) at (2,10) {$\begin{ytableau} 1 & 2 & 3 & \overline{3}\\ 3 & \overline{3} & \overline{3}  \\ \overline{2} \end{ytableau}$};
    \node (p) at (0,12) {$\begin{ytableau} 1 & 3 & 3 & \overline{3} \\ \overline{3} & \overline{3} & \overline{1} \\ \overline{2} \end{ytableau}$};

    \path [draw=red, ->] (a) edge node[left] {$2$} (b);
    \path [draw=blue, ->](a) edge node[right] {$1$} (c);
    \path [draw=green, ->](b) edge node[left] {$3$} (d);
    \path [draw=blue, ->](b) edge node[right] {$1$} (e);
    \path [draw=red, ->](c) edge node[right] {$2$} (f);
    \path [draw=red, ->](d) edge node[left] {$2$} (g);
    \path [draw=blue, ->](d) edge node[right] {$1$} (h);
    \path [draw=green, ->](e) edge node[right] {$3$} (h); 
    \path [draw=green, ->](f) edge node[left] {$3$} (i); 
    \path [draw=red, ->](f) edge node[right] {$2$} (k); 
    \path [draw=blue, ->](g) edge node[left] {$1$} (l); 
    \path [draw=red, ->](h) edge node[right] {$2$} (l); 
    \path [draw=red, ->](i) edge node[left] {$2$} (m); 
    \path [draw=green, ->](k) edge node[right] {$3$} (m); 
    \path [draw=red, ->](l) edge node[left] {$2$} (n); 
    \path [draw=green, ->](m) edge node[right] {$3$} (o);
    \path [draw=green, ->](n) edge node[left] {$3$} (p);
    \path [draw= red, ->](o) edge node[right] {$2$} (p);
\end{tikzpicture}

\caption{New relation in type $C_{3}$ crystal $\mathcal{B}_{(4,3,1)}$}
\label{new relation}
\end{figure}
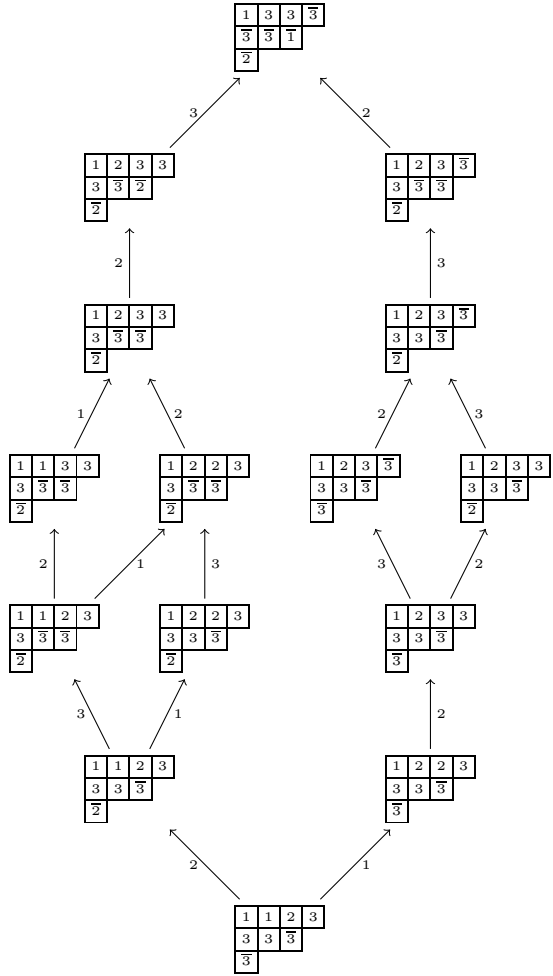

While trying to find new relations among crystal operators is a difficult task, computing the M\"obius function of a given interval is algorithmic and efficient. Specifically, we use SAGE to search for intervals among crystals of finite classical type with M\"obius function not equal to -1, 0, or 1. In general, it is not obvious how to search for new relations among crystal operators. By establishing a relationship between relations among crystal operators and the M\"obius function of intervals within our crystal posets, we have a computational and algorithmic tool to find new relations.

We have found multiple new relations among crystal operators in crystals of type $C_{n}$. We do so by examining intervals where the M\"obius function is not equal to $-1, 0$ or $1$. See Figure \ref{new relation} above for an example of a new relation among crystal operators found in the type $C_{3}$ crystal $\mathcal{B}_{(4,3,1)}$ of shape $\lambda = (4,3,1)$, namely we have $x \in \mathcal{B}_{(4,3,1)}$ such that:
\[ f_{2}f_{3}^{2}f_{2}^{2}f_{1}(x) = f_{2}f{3}f_{2}f_{3}f_{2}f_{1}(x) = f_{3}f_{2}^{2}f_{3}f_{1}f_{2}(x) = f_{3}f_{2}f_{1}f_{2}f_{3}f_{2}(x) = f_{3}f_{2}^{2}f_{1}f_{3}f_{2}(x) \]



\printbibliography





\end{document}